\newcommand{\La}{\Lambda}
\newcommand{\la}{\lambda}
\newcommand{\be}{\beta}
\newcommand{\e}{\epsilon}
\newcommand{\de}{\delta}
\newcommand{\ga}{\gamma}
\newcommand{\Si}{\Sigma}
\newcommand{\s}{\sigma}
\newcommand{\al}{\alpha}
\newcommand{\cal}{\mathcal}
\newcommand{\aC}{\mathbf{C}}
\newcommand{\dist}{\operatorname{dist}}
\newcommand{\diam}{\operatorname{diam}}
\newtheorem{theorem}{Theorem}[section]
\newtheorem{corollary}[theorem]{Corollary}
\newtheorem{proposition}[theorem]{Proposition}
\newtheorem{lemma}[theorem]{Lemma}
\theoremstyle{definition}
\newtheorem{remark}[theorem]{Remark}
\newtheorem{claim}{Claim}
\newtheorem{definition}[theorem]{Definition}
\newtheorem{notation}[theorem]{Notation}
\newcommand{\CR}{\mathcal{CR}}
\newcommand{\ie}{i.e.\ }
\newcommand{\R}{\mathbb{R}}
\newcommand{\Z}{\mathbb{Z}}
\newcommand{\N}{\mathbb{N}}
\newcommand{\A}{\mathbb{A}}
\newcommand{\T}{\mathbb{T}}
\newcommand{\inter}{\operatorname{int}}
\newcommand{\til}{\tilde}
\newcommand{\sm}{\setminus}
\newcommand{\abs}[1]{\left\lvert{#1}\right\rvert}
\newcommand{\norm}[1]{\left\|{#1}\right\|}
\newcommand{\Per}{\operatorname{Per}}
\newcommand{\Fix}{\operatorname{Fix}}
\newcommand{\SL}{\operatorname{SL}}
\newcommand{\tr}{\operatorname{tr}}
\newcommand{\bd}{\partial}
\newcommand{\ol}{\overline}
\newcommand{\id}{\operatorname{id}}
\title[Some Consequences of the Shadowing Property]{Some Consequences of the Shadowing Property\\ in Low Dimensions}
\author{Andres Koropecki}
\address{Andres Koropecki. Universidade Federal Fluminense, Instituto de Matem\'atica, Rua M\'ario Santos Braga S/N, 24020-140 Niteroi, RJ, Brasil}
\email{ak@id.uff.br}
\author{Enrique R. Pujals}
\address{Enrique R. Pujals. IMPA, Estrada Dona Castorina 110, Rio de Janeiro, 22460-320, RJ, Brasil}
\email{enrique@impa.br}
\thanks{The authors were partially supported by CNPq-Brasil}
\begin{document}

\begin{abstract}
We consider low-dimensional systems with the shadowing property and we study the problem of existence of periodic orbits. In dimension two, we show that the shadowing property for a homeomorphism implies the existence of periodic orbits in every $\epsilon$-transitive class, and in contrast we provide an example of a $C^\infty$ Kupka-Smale diffeomorphism with the shadowing property exhibiting an aperiodic transitive class.  Finally we consider the case of transitive endomorphisms of the circle, and we prove that the $\alpha$-H\"older shadowing property with $\alpha>1/2$ implies  that the system is conjugate to an expanding map.
\end{abstract}
\maketitle

% !TEX root = shadow-junto.tex
\section{Introduction}
The main goal of this article is to obtain dynamical consequences of
the shadowing property for surface maps and  one-dimensional dynamics.

Let $(X,d)$ be a metric space and $f\colon X\to X$ a homeomorphism. A
(complete) $\delta$-pseudo-orbit for $f$ is a sequence $\{x_n\}_{n\in
\Z}$ such that $d(f(x_n), x_{n+1})<  \delta$ for all $n\in \Z$. We
say that the orbit of $x$ $\epsilon$-shadows the given pseudo-orbit if
$d(f^n(x), x_n)<\epsilon$ for all $n\in \Z$. Finally, we say that $f$
has the shadowing property (or pseudo-orbit tracing property) if for
each $\epsilon>0$ there is $\delta>0$ such that every
$\delta$-pseudo-orbit is $\epsilon$-shadowed by an orbit of $f$. Note
that we do not assume uniqueness of the shadowing orbit.

One motivation to study systems with this property is that numeric simulations of dynamical systems always produce pseudo-orbits. Thus, systems with the shadowing property are precisely the ones in which numerical simulation does not introduce unexpected behavior, in the sense that simulated orbits actually ``follow'' real orbits.

 When one considers pseudo-orbits,
the natural set that concentrates the nontrivial dynamics is the chain  
recurrent set. If
the system has the shadowing property, the closure of the recurrent  
set coincides with
the chain recurrent set. As usual, it is then natural to ask about
the existence of periodic orbits in the recurrent set. If one also assumes that the system is expansive in the recurrent set (as it happens for hyperbolic systems), then it is easy to see that periodic orbits are dense in the recurrent set. But is there a similar property without requiring expansivity? Without any additional assumption on the underlying space, the answer is no: there are examples on a totally disconnected metric space which have the shadowing property and have no periodic points (moreover, they are minimal); for instance, the so-called adding-machines \cite{adding}. 

However, if the space is assumed to be connected, and in particular a connected manifold, it is not even known wether the shadowing property implies the existence of at least one periodic orbit.
Our first result addresses this problem in dimension two (for precise definitions see \S\ref{sec:etrans}).

\begin{theorem}\label{th:A} Let $S$ be a compact orientable surface,
and let $f\colon S\to S$ be a homeomorphism with the shadowing
property. Then for any given $\epsilon>0$, each $\epsilon$-transitive
component has a periodic point.
\end{theorem}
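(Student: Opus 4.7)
Fix $\epsilon>0$ and an $\epsilon$-transitive component $K$. The plan is to combine shadowing with a surface fixed-point argument. Choose $\delta\in(0,\epsilon/4)$ so that every $\delta$-pseudo-orbit is $(\epsilon/4)$-shadowed, and pick $p\in K$. By definition of $\epsilon$-transitive component (assumed to give $\delta$-chain-transitivity on $K$ for every $\delta<\epsilon$ in the relevant definition in \S\ref{sec:etrans}), $p$ can be joined to itself by a $\delta$-pseudo-orbit $p=x_0,x_1,\dots,x_n=p$ with every $x_i$ in $K$. The shadowing property produces a point $z\in S$ with $d(f^i(z),x_i)<\epsilon/4$ for $0\le i\le n$; in particular $d(f^n(z),z)<\epsilon/2$, so $z$ is an ``almost fixed point'' of $g:=f^n$. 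The task becomes to upgrade this almost-closed orbit into a genuine periodic orbit of $f$ that lies in $K$.

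To do so, I would close up the orbit arc $z,f(z),\dots,f^n(z)$ with a short arc $\sigma$ of diameter less than $\epsilon/2$ from $f^n(z)$ back to $z$, obtaining a closed curve $\gamma\subset S$ of diameter $O(\epsilon)$ inside a small neighborhood of $K$. When $\gamma$ is null-homotopic it bounds a topological disk $D$, and the displacement field $g(x)-x$ is small along the orbit part of $\gamma$ (by shadowing) and controlled along $\sigma$ by construction. A fixed-point index computation along $\gamma$, or equivalently an application of the Brouwer plane translation theorem to a lift of $g$ to a neighborhood of $D$ in $\R^2$, then yields a fixed point of $g$ inside $D$. This produces a periodic point of $f$ within $\epsilon$ of the pseudo-orbit; membership in $K$ itself (rather than its closure) can be secured by an additional shadowing step, since periodic points in the $\epsilon$-neighborhood of $K$ that can be reached by $\epsilon$-pseudo-orbits from $p$ in both directions are by definition in $K$.

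The principal obstacle, and the place where the surface topology must be used essentially, is the case in which $\gamma$ is not null-homotopic in $S$. Here I would lift $f$ to the universal cover $\tilde S$, or to an intermediate cover adapted to $[\gamma]\in\pi_1(S)$: the lift of the orbit satisfies $\tilde f^n(\tilde z)\approx T(\tilde z)$ for the deck transformation $T$ corresponding to $\gamma$, so $z$ carries a nontrivial ``rotation vector'' relative to $[\gamma]$. The strategy in this case is to vary the base point $p$ and the connecting pseudo-orbit inside $K$ to manufacture many such loops, and to use equivariant Brouwer-type tools for surface homeomorphisms in the spirit of Franks and Le Calvez, either to detect a periodic point directly from the rotational data, or to combine several homologically independent loops into a null-homotopic one that reduces the problem to the previous case. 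Handling this uniformly across $S^2$, $\T^2$, and higher-genus surfaces, and ensuring that the shadowing estimates remain consistent throughout the combinatorial cover-argument, is the heart of the proof.
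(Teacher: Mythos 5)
There is a genuine gap --- in fact several. A preliminary one: your starting point assumes that two points of an $\epsilon$-transitive component can be joined by $\delta$-pseudo-orbits for the (possibly much smaller) $\delta$ supplied by the shadowing property. The definition only provides $\epsilon$-pseudo-orbits, and an $\epsilon$-transitive component may contain several distinct chain transitive classes, so this assumption is false in general; the paper sidesteps it by using instead a genuinely recurrent point of $\Lambda$ (which exists since $\Lambda$ is compact and invariant) to produce $\delta$-pseudo-orbit loops for every $\delta$. More seriously, your ``easy'' (null-homotopic) case does not work as written. Knowing that $d(g(z),z)<\epsilon/2$ for $g=f^n$, or even that the displacement of $g$ is small along the closed curve $\gamma$, does not force the fixed-point index of $g$ along $\gamma$ to be nonzero: a small translation has small displacement everywhere, index zero along any small loop, and no fixed point. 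Likewise, Brouwer's plane translation theorem requires a \emph{nonwandering} point of an orientation-preserving homeomorphism of $\R^2$; a single almost-return does not provide one. What actually makes this step work (and is what the paper does in Theorem \ref{th:open-periodic}) is to form a \emph{bi-infinite periodic} pseudo-orbit in the universal cover, of the form $\dots,\hat x,\dots,\hat f^{n-1}(\hat x), T\hat x,\dots$ with $T$ a deck transformation, lift the shadowing property to the cover (Proposition \ref{pro:shadow-lift}), and observe that the shadowing orbit $\hat y$ satisfies $d((T^{-1}\hat f^{n})^k(\hat y),\hat y)<\epsilon$ for \emph{all} $k\in\Z$; this bounded full orbit yields a recurrent point of $T^{-1}\hat f^{n}$ in the plane, to which Brouwer's theorem applies.

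Finally, the case you correctly identify as ``the heart of the proof'' --- a homotopically nontrivial loop on a higher-genus surface --- is left entirely unresolved, and the paper's resolution is quite different from the Franks--Le Calvez rotation-vector strategy you gesture at. On a higher-genus surface the deck group is nonabelian, there is no useful rotation vector, and one cannot in general combine homologically independent loops into a null-homotopic one. Instead the paper first proves, with no use of shadowing at all, that an aperiodic $\epsilon$-transitive component either forces $S=\T^2$ with no periodic points whatsoever, or is contained in a disjoint union of periodic open annuli with attracting or repelling ends (Theorem \ref{th:annuli}). This reduction rests on Conley theory (the isolating blocks $M_1\subset M_2$ of Proposition \ref{pro:isolating}), an Euler-characteristic argument showing that the collars $M_i\sm\inter f(M_i)$ are unions of annuli, the ends compactification, and a Lefschetz trace computation (Lemma \ref{lem:trace}) forcing the compactified invariant pieces to be spheres with exactly two fixed points. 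Only after this reduction is the cover-and-Brouwer argument applied, in the annulus (Corollary \ref{coro:annulus}) and on the torus (Corollary \ref{coro:torus}), where the relevant deck transformations form a cyclic group and the argument closes. Without some replacement for Theorem \ref{th:annuli}, your outline does not constitute a proof.
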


As an immediate consequence, we have
\begin{corollary} If a homeomorphism of a compact surface has the shadowing property, then it has a periodic point.
\end{corollary}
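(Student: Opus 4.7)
I would argue by contradiction, assuming that some $\eps$-transitive component $C$ of $f$ contains no periodic point. The aim is to derive a contradiction by combining the shadowing property, which upgrades the $\eps$-chain loops witnessing transitivity of $C$ into real orbit segments that nearly close up, with a two-dimensional fixed-point theorem (Brouwer's plane translation theorem on the universal cover of $S$, supplemented by Lefschetz in the case $S = S^2$) which forbids such recurrence in the absence of periodic points.

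\textbf{Setting up near-closed orbits.} First I would pass to $f^2$ to reduce to the orientation-preserving case: since $C$ decomposes into at most two $\eps$-transitive components of $f^2$ cyclically permuted by $f$, any periodic point of $f^2$ produced inside such a piece is automatically a periodic point of $f$ lying in $C$. For any $x \in C$ and any tolerance $\eps' < \eps$, the definition of $\eps$-transitive component supplies closed $\delta$-pseudo-loops through $x$ of arbitrary combinatorial length whose segments stay in $C$, where $\delta$ is chosen according to the shadowing parameter associated to $\eps'$. Shadowing converts each such loop into a real orbit segment $y, f(y), \dots, f^n(y)$ with $d(f^n(y), y) < \eps'$, staying within distance $\eps'$ of the pseudo-loop and hence close to $C$.

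\textbf{Two-dimensional rigidity.} Given the supposed absence of periodic points in $C$, no iterate of $f$ fixes a point of $C$, and by choosing $\eps'$ small compared to the separation between $\eps$-transitive components one can prevent fixed points from lying in a small neighborhood of $C$ either. I would then lift to the universal cover $\tilde S$ of $S$: in the case $S = S^2$, the covering is trivial and Lefschetz's theorem combined with the localization near $C$ finishes the argument; in the remaining cases $\tilde S \cong \R^2$, and I would select a lift $\tilde F$ of $f^n$ (composing with a deck transformation if necessary, determined by the homotopy class of the shadowing loop) whose fixed-point-freeness is forced by the hypothesis. Brouwer's plane translation theorem then implies $\tilde F$ is a \emph{free} homeomorphism of the plane, admitting no non-trivial recurrence whatsoever. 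On the other hand, the lift of the near-closed shadowing orbit provides a point $\tilde y$ with $\tilde F(\tilde y)$ arbitrarily close to $\tilde y$, contradicting freeness.

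\textbf{Main obstacle.} The most delicate point will be controlling the homotopy type of the $\delta$-pseudo-loops furnished by $\eps$-transitivity together with that of their shadowing orbits. There is no a priori reason that a given loop closes up to a deck transformation whose composition with the chosen lift of $f^n$ is fixed-point-free; if one were unlucky, every near-return might correspond to a lift that happens to have fixed points (all outside $C$, but still preventing the application of Brouwer). I would need, by a compactness or pigeonhole argument among the many available pseudo-loops, to select one whose shadowing orbit realizes a near-return for a fixed-point-free lift of the appropriate iterate. A secondary concern is to ensure that the periodic point eventually produced actually lies in $C$, rather than in a neighboring $\eps$-transitive component; this should follow by shrinking the shadowing parameter below the distance between $C$ and the other components intersecting a neighborhood of the relevant orbit.
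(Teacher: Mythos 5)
There are two genuine gaps here, and together they explain why the paper does not argue the way you propose. First, the step ``$\tilde F(\tilde y)$ arbitrarily close to $\tilde y$, contradicting freeness'' is not a contradiction: a fixed-point-free orientation-preserving homeomorphism of the plane can displace \emph{every} point by an arbitrarily small amount (a translation by a tiny vector is free). What Brouwer's theorem actually forbids is a \emph{nonwandering} point, and a single near-return of a single orbit segment does not produce one --- especially since your point $\tilde y$, the iterate $n$, and the deck transformation $T$ all change as $\epsilon'\to 0$, so no limit map is available. The paper's fix is to shadow the \emph{bi-infinite periodic} extension of the pseudo-loop, $\dots,\hat x,\dots,T\hat x,\dots,T^2\hat x,\dots$, which yields a point $\hat y$ with $d((T^{-1}\hat f^{\,n})^k(\hat y),\hat y)<\epsilon$ for \emph{all} $k\in\Z$; the orbit closure is then compact and invariant, hence contains a recurrent point, and Brouwer applies. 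But that computation uses $(T^{-1}\hat f^{\,n})^k=T^{-k}\hat f^{\,nk}$, i.e.\ that the chosen lift commutes with $T$ and that $T$ is an isometry.

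That commutation is the second gap: it holds when $f$ (or the relevant power) is homotopic to the identity, but for a general homeomorphism of a surface of genus $\ge 2$ no lift commutes with the deck group, and the entire universal-cover scheme --- including your hope of ``selecting a loop whose shadowing orbit realizes a near-return for a fixed-point-free lift'' --- has no mechanism behind it. The paper does not attempt the direct argument on a general surface. It first proves a structural theorem (Theorem 2.11): an aperiodic $\epsilon$-transitive component forces either $S=\T^2$ with no periodic points at all, or the component lies in a disjoint union of invariant annuli with attracting/repelling ends. This reduction uses Conley theory (isolating neighborhoods from a Lyapunov function), an Euler-characteristic argument showing $M_i\setminus \operatorname{int}f(M_i)$ is a union of annuli, the ends compactification, and a Lefschetz trace computation together with the lemma that some $\operatorname{tr}(A^n)\ge m$ for $A\in\SL(m,\Z)$. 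Only after this reduction does the Brouwer-plus-shadowing argument run, on the open annulus and on $\T^2$, where the relevant power of the map \emph{is} homotopic to the identity. (If you only want the Corollary rather than Theorem A, note that the sphere and genus $\ge 2$ cases already follow from the Lefschetz computation alone, so the torus is the only case where shadowing is really needed --- but your proposal does not exploit that shortcut either.)
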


Note that Theorem \ref{th:A} does not rule out the existence of aperiodic chain
transitive components (in fact Theorem \ref{th:B} below provides an example with the shadowing property exhibiting an aperiodic chain transitive class). However, if there is such a component, some of its points must be accumulated by periodic points.

\begin{corollary} Let $S$ be a compact orientable surface, and let
$f\colon S\to S$ a homeomorphism with the shadowing property. Then
$\ol{\Per(f)}$ intersects every chain transitive class.
\end{corollary}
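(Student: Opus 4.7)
The plan is to reduce the statement to Theorem~\ref{th:A} via a compactness argument. Fix a chain transitive class $C$ and a point $x\in C$, and for each $\epsilon>0$ let $U_\epsilon$ denote the $\epsilon$-transitive component containing $x$ (which by definition also contains $C$). The starting observation is the tautology $C=\bigcap_{\epsilon>0}U_\epsilon$, since $y$ is chain-equivalent to $x$ iff one has $\epsilon$-pseudo-orbits in both directions for every $\epsilon>0$. Note also that $U_\epsilon\subset U_{\epsilon'}$ whenever $\epsilon<\epsilon'$, because any $\epsilon$-pseudo-orbit is automatically an $\epsilon'$-pseudo-orbit.

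Theorem~\ref{th:A} supplies a periodic point $p_n\in U_{1/n}$ for each $n\ge 1$. By compactness of $S$, after passing to a subsequence we may assume $p_n\to q$; then $q\in\overline{\Per(f)}$, so it suffices to show $q\in C$. For each fixed $\epsilon>0$, all but finitely many $p_n$ lie in $U_\epsilon$ (as soon as $1/n<\epsilon$), so $q\in\overline{U_\epsilon}$. The remaining task is therefore to verify the inclusion $\bigcap_{\epsilon>0}\overline{U_\epsilon}\subset C$.

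I expect this last inclusion to be the only nontrivial point, and it should follow from an endpoint-perturbation argument using continuity of $f$. Given $q\in\overline{U_\epsilon}$ for every $\epsilon>0$ and given $\epsilon'>0$, I would fix $\epsilon<\epsilon'/2$ and, using uniform continuity of $f$, a modulus $\rho\in(0,\epsilon'/2)$ such that $d(a,b)<\rho$ implies $d(f(a),f(b))<\epsilon'/2$. Picking $q'\in U_\epsilon$ within distance $\rho$ of $q$, one converts an $\epsilon$-pseudo-orbit from $x$ to $q'$ into an $\epsilon'$-pseudo-orbit from $x$ to $q$ by appending $q$ at the end (the newly created jump has size at most $\epsilon+\rho<\epsilon'$), and symmetrically converts an $\epsilon$-pseudo-orbit from $q'$ to $x$ into an $\epsilon'$-pseudo-orbit from $q$ to $x$ by replacing the starting point $q'$ with $q$ (the new first jump has size at most $\epsilon+\epsilon'/2<\epsilon'$). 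This gives $q\in U_{\epsilon'}$ for every $\epsilon'>0$, hence $q\in C$, as required. The geometric content of the corollary is thus entirely absorbed by Theorem~\ref{th:A}; the rest is essentially bookkeeping.
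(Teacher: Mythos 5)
Your argument is correct and is exactly the (implicit) argument the paper has in mind: the corollary is stated there without proof as an immediate consequence of Theorem~\ref{th:A}, via taking periodic points $p_n$ in the nested $1/n$-transitive components and passing to a limit, with the closing step $\bigcap_{\epsilon>0}\overline{U_\epsilon}\subset C$ handled by the standard endpoint-perturbation of pseudo-orbits that you spell out. One cosmetic remark: ``appending $q$ at the end'' should read ``replacing the endpoint $q'$ by $q$'' (as your own estimate $\epsilon+\rho<\epsilon'$ already indicates), since literally appending $q$ after $q'$ would require a bound on $d(f(q'),q)$ that you do not have.
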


Thus, the presence of an aperiodic chain transitive class implies that there are infinitely many periodic points. In particular, we obtain the following:

\begin{theorem} \label{th:MS} Let $S$ be a compact orientable surface, and let $f\colon S\to S$ be a Kupka-Smale diffeomorphism with the shadowing property. If there are only finitely many periodic points, then $f$ is Morse-Smale.
\end{theorem}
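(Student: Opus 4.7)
\medskip
\noindent\textbf{Proof plan.}
Since $f$ is Kupka--Smale, every periodic point is already hyperbolic and the stable and unstable manifolds of periodic points meet transversally, so to conclude that $f$ is Morse--Smale it only remains to verify $\Omega(f)=\Per(f)$. Shadowing gives $\Omega(f)=\CR(f)$ (a periodic $\delta$-pseudo-orbit through a chain recurrent $x$ is $\eps$-shadowed, forcing $x$ to be non-wandering), so it is enough to prove that every chain transitive class reduces to a single periodic orbit. By the second corollary above, each such class $C$ meets $\ol{\Per(f)}=\Per(f)$; fix $p\in C\cap\Per(f)$, which is hyperbolic by hypothesis.

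Suppose, towards a contradiction, that $C$ strictly contains $\mathcal{O}(p)$, and pick $z\in C\setminus\mathcal{O}(p)$. Choose $\eps>0$ smaller than $\dist(z,\mathcal{O}(p))/2$ and small enough for the local stable/unstable manifold theorem to apply in $\eps$-neighborhoods of $\mathcal{O}(p)$, and let $\delta>0$ be given by the shadowing property for this $\eps$. Using the chain transitivity of $C$, concatenate a $\delta$-chain from $p$ to $z$ (of length $L_1$) with a $\delta$-chain from $z$ back to $p$ (of length $L_2$), and extend by $\mathcal{O}(p)$ in both directions to obtain a bi-infinite $\delta$-pseudo-orbit $(x_n)$ that coincides with $\mathcal{O}(p)$ for $n\le -L_1$ and $n\ge L_2$ and satisfies $x_0=z$. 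Let $(y_n)$ be a real orbit that $\eps$-shadows $(x_n)$.

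For all $n\ge L_2$ one has $d(y_n,\mathcal{O}(p))<\eps$, so by the characterization of $W^s_{\mathrm{loc}}(\mathcal{O}(p))$ for the hyperbolic orbit $\mathcal{O}(p)$ we obtain $y_{L_2}\in W^s_{\mathrm{loc}}(\mathcal{O}(p))$; symmetrically $y_{-L_1}\in W^u_{\mathrm{loc}}(\mathcal{O}(p))$. Iterating, $y_0\in W^u(\mathcal{O}(p))\cap W^s(\mathcal{O}(p))$. Since $d(y_0,z)<\eps$ and $\eps<\dist(z,\mathcal{O}(p))/2$, the point $y_0$ is not on $\mathcal{O}(p)$, so it is a genuine homoclinic point. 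By the Kupka--Smale hypothesis this intersection is transverse, and the Smale--Birkhoff homoclinic theorem produces a hyperbolic invariant set on which some iterate of $f$ is topologically conjugate to a full Bernoulli shift, containing infinitely many periodic orbits. This contradicts the finiteness of $\Per(f)$, so $C=\mathcal{O}(p)$ and $\Omega(f)=\Per(f)$.

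The main obstacle is extracting a genuine homoclinic orbit from the merely abstract chain transitivity of $C$. This is the one step where shadowing and hyperbolicity are used in tandem: shadowing furnishes a true orbit that stays close to $\mathcal{O}(p)$ in the distant past and future while visiting $z$ near time zero, and the local invariant manifold theorem at the hyperbolic orbit $\mathcal{O}(p)$ upgrades this mere proximity into true asymptotic convergence, placing the shadowing point on $W^u\cap W^s$. The rest is a standard combination of Kupka--Smale transversality with the Smale--Birkhoff horseshoe construction.
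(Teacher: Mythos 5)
Your argument is correct and follows essentially the same route as the paper: the aperiodic chain classes are excluded via Theorem \ref{th:A} (you invoke its corollary on $\ol{\Per(f)}$ meeting every chain class, the paper uses the $\epsilon$-transitive components directly), and a chain class containing a periodic orbit is collapsed to that orbit by shadowing a concatenated pseudo-orbit to produce a transverse homoclinic point, exactly as in the paper's Theorem \ref{th:KS}. The only cosmetic difference is that you conclude via Smale--Birkhoff where the paper phrases the same mechanism as ``the class equals the homoclinic class of $p$''.
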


Another problem of interest is to find ``new''  examples of systems having the shadowing property.  It is known that in dimension at least $2$, topologically stable systems have the shadowing property \cite{walters, nitecki}.  Systems which are hyperbolic
(meaning Axiom A with the strong transversality condition) also
exhibit the shadowing property. In fact, for such systems, a stronger
property called Lipschitz shadowing holds. This means that there is
some constant $C$ such that in the definition of shadowing one can
always choose $\delta=C\epsilon$ \cite[\S2.2]{pilyugin}. In
\cite{tikhomirov} it is shown that Lipschitz shadowing for
diffeomorphisms is actually equivalent to hyperbolicity.

However, not all systems with the shadowing property have Lipschitz
shadowing. In fact, in view of \cite{tikhomirov}, a simple example is
any non-hyperbolic system which is topologically conjugated to a
hyperbolic one.  Nevertheless, this type of example still has many of the
properties of hyperbolicity; in particular, there are finitely many chain transitive classes, with dense periodic orbits.
Another non-hyperbolic example which has the shadowing property is a
circle homeomorphism with infinitely many fixed points, which are
alternatively attracting and repelling and accumulating on a unique
non-hyperbolic fixed point. However, in this type of example, although there are infinitely many chain transitive classes, they all have dense periodic points (in fact they are periodic points).

The next theorem gives a new type of example of smooth diffeomorphism with the shadowing property, 
which is essentially different from the other known examples in that it has an aperiodic chain transitive class, and moreover, all periodic points are hyperbolic. In particular, our example shows that one cannot hope to improve Theorem \ref{th:A} by ``going to the limit'', even for Kupka-Smale diffeomorphisms:

\begin{theorem}\label{th:B} In any compact surface $S$, there exists a
Kupka-Smale $C^\infty$ diffeomorphism $f\colon S\to S$ with the
shadowing property which has an aperiodic chain transitive component.
More precisely, it has a component which is an invariant circle supporting an irrational
rotation.
\end{theorem}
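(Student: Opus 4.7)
The plan is to give an explicit construction, first on a specific model surface (such as $\T^2$ or $S^2$) and then to transport it to an arbitrary compact surface $S$ via a local surgery. I would construct $f$ with an invariant circle $C$ on which $f|_C = R_\alpha$ for some irrational $\alpha$, surrounded by an infinite sequence of hyperbolic periodic orbits $\{p_k\}$ of periods $q_k \to \infty$ and rotation numbers $p_k/q_k \to \alpha$ accumulating on $C$; the complement of a small annular neighborhood $A$ of $C$ will carry a finite Morse-Smale structure (hyperbolic sinks, sources, saddles with transverse invariant manifolds). Each periodic orbit $p_k$ will lie inside an $\epsilon_k$-transitive class containing $C$, for a sequence $\epsilon_k \to 0$, so that $C$ is a chain transitive component of $f$ even though it fails to be an $\epsilon$-transitive component for any $\epsilon>0$ (compatibly with Theorem~\ref{th:A}).

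The Kupka-Smale property would be built in by construction: every periodic point is hyperbolic, with transverse invariant manifolds; the circle $C$ itself contains no periodic points and so satisfies the conditions vacuously. The aperiodicity and chain transitivity of $C$ are immediate from the minimality of $R_\alpha$.

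The main obstacle, and the heart of the proof, is the verification of the global shadowing property. The difficulty concentrates in the annular neighborhood $A$: the restriction $R_\alpha$ does not itself have the shadowing property, so a $\delta$-pseudo-orbit along $C$ can accumulate drift of order $n\delta$, and this drift must be absorbed by the ambient two-dimensional dynamics. My plan is to show that a pseudo-orbit seen over a long-enough time window has an \emph{effective} rotation number close to $\alpha$; whenever this effective rotation matches some $p_k/q_k$ closely, the pseudo-orbit can be shadowed by a true orbit passing near $p_k$ using local hyperbolic shadowing at $p_k$. The general case is handled by a symbolic-coding argument: the pseudo-orbit is broken into segments, each coded by a rational approximant $p_k/q_k$ of $\alpha$, and the shadowing orbit is assembled from local hyperbolic shadows at the corresponding $p_k$'s, glued via transverse heteroclinic connections whose existence is guaranteed by the construction. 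Away from $A$, shadowing follows from the classical fact that Morse-Smale diffeomorphisms have the shadowing property, and a simple ``no chain-recurrent interaction'' argument shows that the two pieces glue globally. To transport the example to an arbitrary compact surface $S$, one embeds the annular piece into a chart of $S$ and extends by a Morse-Smale $C^\infty$ diffeomorphism on the complement, matching smoothly on the boundary; the resulting diffeomorphism of $S$ inherits the Kupka-Smale and shadowing properties globally.
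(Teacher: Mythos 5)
You have correctly isolated the crux of the problem: a $\delta$-pseudo-orbit along the invariant circle $C$ can drift angularly by up to $\delta$ per step \emph{in either direction}, so its angular itinerary is essentially arbitrary, and a genuine orbit must track that itinerary within $\epsilon$ for all time. But the mechanism you propose to absorb this drift does not work as stated. A single hyperbolic periodic orbit $p_k$ of rotation number $p_k/q_k$ rotates rigidly, so local hyperbolic shadowing at $p_k$ can only track pseudo-orbits whose angular position stays $\epsilon$-close to a rigid rotation by $p_k/q_k$ — it cannot follow a pseudo-orbit that, say, drifts backwards at rate $\delta$ for a long stretch and then forwards. Your fallback, ``assemble the shadow from local pieces at various $p_k$'s glued via transverse heteroclinic connections,'' is exactly where the missing idea lives: to realize an arbitrary angular itinerary one needs, at every return time, the freedom to advance, stand still, or retreat by one angular unit, and this freedom must be encoded in a single hyperbolic set with full symbolic dynamics on those three choices. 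That is precisely the ``crooked horseshoe'' of the paper: a hyperbolic set $K=K_{-1}\cup K_0\cup K_1$ conjugate to the full shift on $\{-1,0,1\}$ whose lift satisfies $\hat f(\hat K_i)\subset \hat K+(i,0)$, built inside a $q$-sheeted cover so that each symbol corresponds to an angular correction of $1/q$. Without something equivalent, the segment-by-segment gluing has no reason to produce a single orbit whose angular coordinate stays uniformly close to the pseudo-orbit, and the radial excursions forced by passing along heteroclinic connections between orbits at different radii are not controlled either.

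Two further gaps. First, you do not explain why a pseudo-orbit must be either entirely far from $C$ (where Morse--Smale/Axiom~A shadowing applies) or entirely in a small neighborhood of $C$; the paper achieves this with a sequence of invariant circles bounding the annuli $\aC_n$, alternately normally attracting and repelling, which act as full traps for pseudo-orbits — isolated periodic orbits accumulating on $C$ do not prevent radial drift of pseudo-orbits. Second, the $C^\infty$ requirement is not free: the hyperbolic structure (your $p_k$'s, or the paper's horseshoes) must accumulate on $C$ with $C^r$-distance to the rotation tending to $0$ for \emph{every} $r$, and making a hyperbolic set with period $q_k$ that close to a rotation forces $|\alpha-p_k/q_k|$ to be super-polynomially small in $q_k$. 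This is why the paper takes $\alpha$ Liouville and why the quantitative estimate $d_{C^r}(H,\id)<C_r/m$ of Proposition~\ref{pro:flow} is the technical heart of the construction; your proposal is silent on this, and for a generic (e.g.\ Diophantine) $\alpha$ the construction would at best yield finite smoothness.
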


At this point, we want to emphasize that none of the theorems stated so far assume either Lipschitz or H\"older shadowing.

Finally, we consider the case of transitive endomorphisms of the circle with the $\al$-H\"older shadowing property, \ie such that there is a constant $C$ such that every $\delta$-pseudo-orbit is $C\delta^\alpha$-shadowed by an orbit, and we show that $\al$-H\"older shadowing with $\al>1/2$ implies conjugacy to linear expanding maps (see definitions \ref{holder}-\ref{expanding} for details).

\begin{theorem}\label{crit-1} Let $f$ be a $C^{2}$ endomorphism of the circle with finitely many turning points. Suppose that $f$ is  transitive and satisfies the $\al$-H\"older shadowing property with $\al>\frac{1}{2}$. Then $f$ is conjugate to a linear expanding endomorphism.
\end{theorem}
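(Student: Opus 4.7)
My plan is to reduce the statement to Shub's classical theorem, which asserts that any $C^{1}$ expanding endomorphism of the circle is topologically conjugate to the degree-$d$ linear map $z \mapsto z^{d}$. Accordingly I would prove in order that $f$ has no turning points, that $f$ is then a $C^{2}$ covering map of degree $|d| \ge 2$, and finally that $f$ is uniformly expanding.

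The decisive step is the exclusion of turning points, and this is where the hypothesis $\al > 1/2$ is used. Suppose $c$ were a turning point; by $C^{2}$ smoothness, $f(c+h) = d + \tfrac{1}{2}f''(c)h^{2} + o(h^{2})$ with $d = f(c)$ and $f''(c) \ne 0$, so the two local branches of $f^{-1}$ near $d$ scale like $\sqrt{\,\cdot\,}$. I would consider the pseudo-orbit defined by $x_{-1} = c$ and $x_{n} = f^{n}(d + \eta)$ for $n \ge 0$, which has a single jump of size $\eta$ at time $-1$. Any $\al$-H\"older shadow $z$ must satisfy $|z_{-1} - c| \le C\eta^{\al}$, so $z = f(z_{-1}) = d + O(\eta^{2\al})$. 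Since $\al > 1/2$ gives $\eta^{2\al} \ll \eta$, the shadow is essentially pinned at $d$, and propagating forward yields $|f^{n}(z) - x_{n}| \sim |Df^{n}(d)|\, \eta$. The H\"older bound thus forces $|Df^{n}(d)| \le C\eta^{\al - 1}$ for every $n \ge 1$. Using transitivity together with the finiteness of the turning set, one can produce a critical value $d$ with $\sup_{n} |Df^{n}(d)| = \infty$: the critical orbit cannot remain confined to a neighborhood of the finitely many turning points, so it repeatedly passes through regions where $f'$ is large. Choosing $n$ large for a fixed small $\eta$ then contradicts the inequality, ruling out turning points.

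Once turning points are excluded, $f'$ has constant sign on the circle (by connectedness of $S^{1}$) and compactness gives $|f'| \ge m > 0$; so $f$ is a $C^{2}$ covering map of some degree $d \in \Z$. The case $|d| = 1$ would make $f$ a $C^{2}$ diffeomorphism, and by Denjoy's theorem any transitive $C^{2}$ diffeomorphism of the circle is conjugate to an irrational rotation, which fails the shadowing property (a small uniform drift in a pseudo-orbit cannot be shadowed); hence $|d| \ge 2$. Attracting periodic orbits are excluded by transitivity (an open invariant basin is forbidden), and parabolic periodic orbits are excluded by an argument parallel to the turning-point one, using the normal form $f(p+h) = p + h + ah^{2} + \cdots$ whose slow polynomial drift produces the same $\sqrt{\,\cdot\,}$-scaling obstruction for $\al > 1/2$. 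Therefore every periodic orbit of $f$ is hyperbolic repelling; combined with the density of $\Per(f)$ in the chain transitive class provided by Theorem~\ref{th:A} and continuity of $f'$, this yields $|(f^{n})'| > 1$ uniformly on the circle for some $n \ge 1$, \ie $f$ is expanding, and Shub's theorem finishes the proof.

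The main obstacle is the first step. Its crux is producing a turning value $d$ with $\sup_{n} |Df^{n}(d)| = \infty$, which is where transitivity and the finiteness of the turning set must be combined with classical one-dimensional arguments to guarantee that the critical orbit is not trapped in a region of bounded derivative product. Once this is in hand, the numerical competition between $|Df^{n}(d)|\,\eta$ and $C\eta^{\al}$ captures exactly the H\"older threshold $\al = 1/2$ and explains the sharpness of the hypothesis $\al > 1/2$. The parabolic analogue in the last step is technically similar but uses quadratic drift in place of the quadratic fold.
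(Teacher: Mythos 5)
Your opening move is the same as the paper's: exploit the quadratic behaviour at a turning point $c$ so that a pseudo-orbit with a single jump of size $\eta$ (resp.\ $O(\epsilon^2)$ in the paper) forces the shadowing orbit to pass within $O(\eta^{2\al})\ll\eta$ of the critical value, which is exactly where $\al>\tfrac12$ enters. But the way you close the argument has a genuine gap. First, the propagation estimate $|f^n(z)-x_n|\sim|Df^n(d)|\,\eta$ is not justified: it requires comparing $f^n$ on the interval between $z$ and $d+\eta$ with the derivative at the single point $d$, and this comparison collapses precisely when the forward orbit of $d$ passes near (or onto) another turning point, where $f'$ vanishes; for a $C^2$ map with critical points there is no a priori distortion control. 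Second, and more seriously, the claim that transitivity plus finiteness of the turning set produces a critical value $d$ with $\sup_n|Df^n(d)|=\infty$ is asserted, not proved, and it does not follow from ``the orbit repeatedly passes through regions where $f'$ is large'': the product $Df^n(d)=\prod_{k<n}f'(f^k(d))$ can stay bounded (or vanish identically, e.g.\ if the critical orbit hits another turning point) because large factors can be cancelled by returns near the critical set. Statements of this type (growth of derivatives along critical orbits) are deep results in one-dimensional dynamics, not consequences of transitivity alone. The paper avoids both issues with a purely topological device: the shadowing orbit $x_i$ and the turning point $c_i$ bound an interval $I_i\supset(c_i-\epsilon/2,c_i+\epsilon/2)$ on which $f$ folds, $f^{j}(I_i)=(f^j(x_i),f^j(c_i))$ has diameter at most $\delta=O(\epsilon^{2\al})<\epsilon/2$ up to the first time $j_i$ it meets a turning point $c_{n_i}$, whence $f^{j_i}(I_i)\Subset I_{n_i}$; cycling through the finitely many turning points yields $f^N(I)\Subset I$, contradicting transitivity with no derivative estimates at all.

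A secondary remark: your last step proves more than the theorem asks and more than the hypotheses allow. The conclusion is only a \emph{topological conjugacy} to a linear expanding map, which for a transitive non-invertible local homeomorphism of the circle is Theorem~\ref{th:localhomeo} (semiconjugacy to $E_d$ plus collapsing of fibers); no hyperbolicity of periodic points is needed. Upgrading to an actually expanding map requires ruling out non-hyperbolic periodic orbits, and in the paper this is exactly what needs the extra hypothesis of $C^r$-robust transitivity (Theorem~\ref{crit-robust}); your sketch of excluding parabolic points by ``quadratic drift'' under plain transitivity would need to be made precise and is not obviously salvageable. Your Denjoy argument for excluding degree $\pm1$ is fine, though it can be replaced by the observation that a degree-$\pm1$ local homeomorphism of the circle is a homeomorphism, which is excluded since a transitive circle homeomorphism is conjugate to an irrational rotation and rotations fail any H\"older shadowing.
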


If the transitivity of $f$ persists after perturbations, we can improve the result. We say that $f$ is $C^r$ robustly transitive if all maps in a $C^r$-neighborhood of $f$ are transitive.

\begin{theorem} \label{crit-robust} Let $f$ be a $C^{2}$ orientation preserving endomorphism of the circle with finitely many turning points. Suppose that $f$ satisfies the $\al$-H\"older shadowing property with $\al>\frac{1}{2}$. If $f$ is $C^r$-robustly transitive, $r\geq 1$, then $f$ is an expanding endomorphism. 
\end{theorem}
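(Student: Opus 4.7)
The strategy will be to use Theorem~\ref{crit-1} to obtain a topological model of $f$, and then invoke $C^r$-robust transitivity together with classical one-dimensional hyperbolicity theory to upgrade this to smooth uniform hyperbolicity. First, since $C^r$-robust transitivity implies transitivity, I can apply Theorem~\ref{crit-1}, which yields a homeomorphism $h\colon S^1\to S^1$ and an integer $d\geq 2$ with $h\circ f=E_d\circ h$, where $E_d(x)=dx\pmod 1$. In particular $f$ has degree $d$, is expansive, and has dense periodic orbits, bijectively matching those of $E_d$ period by period.

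Next, I would rule out the standard obstructions to uniform expansion. An attracting periodic orbit of $f$ is excluded by transitivity alone, since its basin would be a proper open forward-invariant set. An indifferent periodic orbit $p$ of period $n$ with $(f^n)'(p)=1$ is excluded by $C^r$-robust transitivity: a $C^r$-small bump perturbation supported near $p$ can be arranged to keep $p$ on a periodic orbit of period $n$ while decreasing $(f^n)'(p)$ strictly below $1$, producing an attracting periodic orbit in every $C^r$-neighborhood of $f$, which contradicts robust transitivity. Hence every periodic orbit of $f$ will be hyperbolic repelling.

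The crucial step is the exclusion of critical points. Assuming $c\in S^1$ satisfies $f'(c)=0$, the plan is to produce, in every $C^r$-neighborhood $\mathcal U$ of $f$, a map $\tilde f\in\mathcal U$ with an attracting periodic orbit. The conjugacy $h$ supplies approximate returns of $c$ to itself; after a preliminary $C^r$-small perturbation that slightly displaces the $f$-orbit of $c$ so that $h(c)$ is mapped onto a recurrent orbit of $E_d$, one has $d(f^N(c),c)<\delta$ for some $N$ and arbitrarily small $\delta$. A second $C^r$-small perturbation, modifying $f$ on a small interval around $f^{N-1}(c)$ so that $\tilde f(f^{N-1}(c))=c$, will produce $\tilde f\in\mathcal U$ for which $c$ is a periodic point of period $N$. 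Since the perturbation leaves $\tilde f'(c)=f'(c)=0$, the orbit of $c$ is super-attracting for $\tilde f$, and $\tilde f$ is not transitive, contradicting $C^r$-robust transitivity. So $f$ has no critical points.

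With all three obstructions eliminated, $f$ will be a $C^2$ circle endomorphism without critical points and with all periodic orbits hyperbolic repelling. Ma\~n\'e's classical one-dimensional hyperbolicity theorem then applies and implies that $f$ is uniformly expanding: there exist $C>0$ and $\lambda>1$ with $|(f^n)'(x)|\geq C\lambda^n$ for all $x\in S^1$ and $n\geq 1$, so $f$ is an expanding endomorphism. The main obstacle will be the critical-point step: producing the approximate return of $c$ via a $C^r$-small preliminary perturbation is delicate because $c$ need not be recurrent even though generic orbits are, and matching the size of the closing perturbation against the available approximate-return distance requires one-dimensional perturbation techniques tailored to the $C^r$ topology.
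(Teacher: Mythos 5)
Your overall skeleton matches the paper's: invoke Theorem~\ref{crit-1} to see $f$ is a local homeomorphism conjugate to $E_d$, eliminate critical points using robust transitivity, then apply Ma\~n\'e's theorem (Theorem A of \cite{M}) and kill the non-hyperbolic-periodic-point alternative by a local bump perturbation. The last two steps are essentially identical to the paper's. The genuine problem is the critical-point step, which you yourself flag as ``the main obstacle'': your two-stage scheme is not a proof. First, the conjugacy $h$ from Theorem~\ref{crit-1} is only a homeomorphism, so ``$h(c)$ lands on a recurrent orbit of $E_d$'' gives no metric control whatsoever on $d(f^N(c),c)$; and a ``$C^r$-small preliminary perturbation that slightly displaces the $f$-orbit of $c$'' to force recurrence of $c$ is exactly as hard as the problem you are trying to solve. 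Second, even granted $d(f^N(c),c)<\delta$, the closing perturbation supported near $f^{N-1}(c)$ must avoid the rest of the orbit segment and be small in the $C^r$ norm for the \emph{given} $r$ of the robust-transitivity hypothesis; this is the $C^r$ closing lemma, which is notoriously delicate already for $r=2$ and is not something you can wave at. As written, the argument that $f$ has no critical points has a real gap.

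The paper closes this gap with a trick you should note, because it completely sidesteps the closing-lemma difficulty: it perturbs by \emph{translation}. Writing $F$ for a lift of $f$ and $F_t=F+t$, the map $f_t$ covered by $F_t$ is $C^\infty$-close to $f$ for small $t$ (the perturbation is a constant, so all derivatives are unchanged), and since $F$ is increasing one has $F_t^n(z)\geq F^n(z)+t$ for $t>0$. Combining this with the fact that $\bigcup_{n\geq 0}f^n(I)=\mathbb{S}^1$ for every interval $I$ (a consequence of transitivity plus the absence of turning points), an intermediate value argument in $t$ produces, for \emph{any} prescribed point $x$ and any $\epsilon>0$, some $0<t<\epsilon$ and $n$ with $f_t^n(x)=x$. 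Applied to $x=c$ a critical point, $c$ is still critical for $f_t$ (same derivative), hence becomes a super-attracting periodic point of $f_t$, contradicting $C^r$-robust transitivity. No recurrence of $c$, no conjugacy estimates, and no $C^r$ closing lemma are needed. If you replace your critical-point step by this translation argument, the rest of your outline goes through as in the paper.
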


Observe that in this theorem we do not assume that the shadowing property holds for perturbations of the initial system. 
Theorem \ref{crit-robust} can be concluded directly from \cite{KSS}, where it is proved that hyperbolic endomorphisms are open and dense for one dimensional dynamics. Nevertheless, we provide the present proof because it involves very elementary ideas that may have a chance to be generalized for surface maps.   
A result similar to Theorem \ref{crit-1} for the case of diffeomorphisms in any dimension was recently given in \cite{tikhomirov-holder}.

Let us say a few words about the techniques used in this article.
To obtain Theorem \ref{th:A}, we use Conley's theory combined with a  
Lefschetz index
argument to reduce the problem to one in the annulus or the torus. To do this, we prove a result about aperiodic $\epsilon$-transitive components that is unrelated to the shadowing property and may be interesting by itself (see Theorem \ref{th:annuli}). In  
that setting, then we apply Brouwer's theory for plane homeomorphisms (which is strictly two-dimensional) to obtain the required periodic points.

To prove Theorem \ref{th:B}, we use a construction in the annulus with  
a special kind of
hyperbolic sets, called crooked horseshoes, accumulating on an  
irrational rotation on the
circle (with Liouville rotation number). The shadowing property is  
obtained for points
far from the rotation due to the hyperbolicity of the system outside a  
neighborhood of
the rotation, and near the rotation the shadowing comes from the  
crooked horseshoes. The
main technical difficulty for this construction is to obtain  
arbitrarily close to the
identity a hyperbolic system with a power exhibiting a crooked  
horseshoe. This is
addressed in the Appendix (see Proposition \ref{pro:flow})

To prove Theorem \ref{crit-1} we use the shadowing property to obtain a small interval containing a turning point, such that some forward iterate of the interval also contains a turning point. Assuming that the shadowing is H\"older with $\alpha>\frac{1}{2}$, it is concluded that some forward iterate of the interval has to be contained inside the initial interval, contradicting the transitivity. Once turning points are discarded, Theorem \ref{crit-robust} is proved using the fact that the dynamics preserves orientation and recurrent points can be closed to periodic points by composing with a translation. 

\subsection*{Acknowledgements}
We thank the anonymous referees for their very careful reading of this paper and for the valuable comments and suggestions.

% !TEX root = shadow-main.tex
\section{Shadowing implies periodic orbits: proof of Theorem \ref{th:A}}

\subsection{Lifting pseudo-orbits}
Let $S$ be a orientable surface of finite type, and $f\colon S\to S$ a homeomorphism. If $f$ is not the sphere, we may assume that $S$ is endowed with a complete Riemannian metric of constant non positive curvature, which induces a metric $d(\cdot, \cdot)$ on $S$. Denote by $\hat{S}$ the universal covering of $S$ with covering projection $\pi\colon \hat{S}\to S$, equipped with the lifted metric which we still denote by $d(\cdot,\cdot)$ (note that  $\hat{S}\simeq \R^2$ or $\mathbb{H}^2$).

The covering projection $\pi$ is a local isometry, so that we may fix $\epsilon_0$ such that for each $\hat{x}\in \hat{S}$ there is $\epsilon_0>0$ such that $\pi$ maps the $\epsilon_0$-neighborhood of $\hat{x}$ to the $\epsilon_0$-neighborhood of $\pi(\hat{x})$  isometrically for any $\hat{x}\in \hat{S}$.

The next proposition ensures that one can always lift $\epsilon$-pseudo orbits of $f$ to the universal covering in a unique way (given a base point in a fiber) if $\epsilon$ is small enough.

\begin{proposition}\label{pro:lift}  Given a lift $\hat{f}\colon \hat{S}\to \hat{S}$ of $f$, an $\epsilon_0$-pseudo orbit $\{x_i\}$, and $\hat{y}\in \pi^{-1}(x_0)$, there is a unique $\epsilon_0$-pseudo orbit $\{\hat{x_i}\}$ for $\hat{f}$ such that $\hat{x}_0 =\hat{y}$ and $x_i = \pi(\hat{x_i})$ for all $i$.
\end{proposition}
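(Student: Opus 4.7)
The plan is to construct $\{\hat{x}_i\}_{i\in\Z}$ by induction in both time directions, using the given local isometry property of $\pi$ on $\epsilon_0$-balls. I would take $\hat{x}_0 = \hat{y}$ as the base case and then extend separately to positive and negative indices via $\hat{f}$ and $\hat{f}^{-1}$.

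For the forward step, suppose $\hat{x}_i$ has been defined with $\pi(\hat{x}_i)=x_i$. Then $\hat{f}(\hat{x}_i)$ is a lift of $f(x_i)$, and by the choice of $\epsilon_0$ the projection $\pi$ restricts to an isometry from the $\epsilon_0$-ball around $\hat{f}(\hat{x}_i)$ onto the $\epsilon_0$-ball around $f(x_i)$. Since $d(f(x_i),x_{i+1})<\epsilon_0$, the downstairs ball contains $x_{i+1}$, so there is a unique lift $\hat{x}_{i+1}$ of $x_{i+1}$ inside the upstairs ball. By construction $d(\hat{f}(\hat{x}_i),\hat{x}_{i+1})<\epsilon_0$, giving the pseudo-orbit condition upstairs.

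For the backward step, given $\hat{x}_i$, I would apply the local isometry at $\hat{x}_i$ itself rather than at $\hat{f}(\hat{x}_i)$: since $d(f(x_{i-1}),x_i)<\epsilon_0$, there is a unique lift $\hat{z}$ of $f(x_{i-1})$ in the $\epsilon_0$-ball around $\hat{x}_i$. Setting $\hat{x}_{i-1}=\hat{f}^{-1}(\hat{z})$ produces a lift of $x_{i-1}$ satisfying $d(\hat{f}(\hat{x}_{i-1}),\hat{x}_i)<\epsilon_0$, as required.

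Uniqueness is then automatic from the same local-isometry argument applied at each step: once the ball is specified, the lift of the relevant point inside it is unique, and the inductive hypothesis pins down the ball. I do not expect any real obstacle; the proposition is essentially a direct unpacking of the pseudo-orbit definition combined with the elementary covering-space fact already recorded before the statement. The only thing to be careful about is keeping the two inductions straight and making sure the inequality $d(f(x_i),x_{i+1})<\epsilon_0$ is invoked in the right direction (forward: around $\hat{f}(\hat{x}_i)$; backward: around $\hat{x}_i$).
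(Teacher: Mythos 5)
Your argument is correct and is essentially the same as the paper's: both proofs define $\hat{x}_0=\hat{y}$ and extend forward and backward by one step at a time, using the fact that $\pi$ is an isometry on $\epsilon_0$-balls (equivalently, that $\epsilon_0$ is below the separation of points in a fiber) to get existence and uniqueness of the next lift, the backward step amounting to the paper's ``unique $\hat{x}_{i-1}\in\pi^{-1}(x_{i-1})$ with $d(\hat{f}(\hat{x}_{i-1}),\hat{x}_i)<\epsilon_0$.'' No gaps.
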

\begin{proof} Note that from the definition of $\epsilon_0$, we have $$\epsilon_0 < \min\{d(\hat{y},\hat{x}): x\in S,\, \hat{x},\hat{y}\in \pi^{-1}(x),\, \hat{y}\neq\hat{x}\}.$$
 Set $\hat{x}_0 = \hat{y}$. Then there is a unique choice of $\hat{x}_1\in \pi^{-1}(x_1)$ such that $d(\hat{f}(\hat{x}_0), \hat{x}_1)<\epsilon_0$, and similarly there is a unique $\hat{x}_{-1}\in \pi^{-1}(x_{-1})$ such that $d(\hat{f}(\hat{x}_{-1}), \hat{x}_0)<\epsilon_0$. Proceeding inductively, one completes the proof.
\end{proof}

The following proposition follows from a standard compactness argument which we omit.

\begin{proposition}\label{pro:uniform-lift} If $K\subset S$ is compact and $\hat{f}\colon \hat{S}\to \hat{S}$ is a lift of $f$, then $\hat{f}$ is uniformly continuous on the $\epsilon$-neighborhood of $\pi^{-1}(K)$, for some $\epsilon>0$.
\end{proposition}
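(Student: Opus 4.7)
The plan is to exploit the $\Gamma$-equivariance of $\hat{f}$, where $\Gamma$ denotes the deck transformation group of $\pi$, in order to reduce uniform continuity on the (non-compact) $\Gamma$-invariant neighborhood of $\pi^{-1}(K)$ to uniform continuity on a compact neighborhood of a compact ``fundamental piece''. The key observations are that $\Gamma$ acts by isometries of $\hat{S}$ and that the lift $\hat{f}$ commutes with this action up to relabelling.

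First I would choose a compact set $\hat{K}\subset \hat{S}$ with $\pi(\hat{K})=K$, for instance by covering $K$ by finitely many evenly covered neighborhoods and taking the union of one closed sheet over each; this yields $\pi^{-1}(K)=\bigcup_{\gamma\in\Gamma}\gamma(\hat{K})$. Fix any $\epsilon>0$, and let $N$ denote the closed $2\epsilon$-neighborhood of $\hat{K}$ in $\hat{S}$. Because $\hat{S}$ is complete (it is isometric to $\R^2$ or $\mathbb{H}^2$), $N$ is compact, so $\hat{f}$ is uniformly continuous on $N$: given $\eta>0$ there is $\delta\in(0,\epsilon)$ such that $\hat{a},\hat{b}\in N$ with $d(\hat{a},\hat{b})<\delta$ imply $d(\hat{f}(\hat{a}),\hat{f}(\hat{b}))<\eta$.

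Next I transfer this to the full neighborhood using equivariance. Since $\pi\circ(\hat{f}\circ\gamma\circ\hat{f}^{-1})=\pi$ for every $\gamma\in\Gamma$, the map $\hat{f}\circ\gamma\circ\hat{f}^{-1}$ is a deck transformation $\gamma'\in\Gamma$, i.e., $\hat{f}\circ\gamma=\gamma'\circ\hat{f}$. Now take any $\hat{x},\hat{y}$ in the $\epsilon$-neighborhood of $\pi^{-1}(K)$ with $d(\hat{x},\hat{y})<\delta$. Pick $\gamma\in\Gamma$ so that $\gamma^{-1}(\hat{x})$ lies within $\epsilon$ of $\hat{K}$; since $\gamma^{-1}$ is an isometry, $\gamma^{-1}(\hat{y})$ lies within $\epsilon+\delta<2\epsilon$ of $\hat{K}$, so both points belong to $N$. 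Applying uniform continuity on $N$ and then the isometry $\gamma'$ gives $d(\hat{f}(\hat{x}),\hat{f}(\hat{y}))=d(\hat{f}(\gamma^{-1}\hat{x}),\hat{f}(\gamma^{-1}\hat{y}))<\eta$, which is exactly the claim.

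The only point requiring any care is the equivariance identity $\hat{f}\circ\gamma=\gamma'\circ\hat{f}$, which is elementary once one uses $\pi\circ\hat{f}=f\circ\pi$; everything else is the standard compactness argument that the authors allude to.
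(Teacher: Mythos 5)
Your argument is correct, and it is precisely the ``standard compactness argument'' that the paper alludes to and omits: the equivariance identity $\hat{f}\circ\gamma=\gamma'\circ\hat{f}$ (valid since $\hat{f}\circ\gamma\circ\hat{f}^{-1}$ covers the identity and deck transformations are isometries of the lifted metric) reduces uniform continuity on the $\Gamma$-invariant $\epsilon$-neighborhood of $\pi^{-1}(K)$ to uniform continuity on the compact $2\epsilon$-neighborhood of a compact set $\hat{K}$ with $\pi(\hat{K})=K$. All the supporting points (existence of such a compact $\hat{K}$, compactness of closed bounded sets in $\hat{S}\simeq\R^2$ or $\mathbb{H}^2$, and the triangle-inequality step placing both $\gamma^{-1}(\hat{x})$ and $\gamma^{-1}(\hat{y})$ in that compact neighborhood) are handled correctly.
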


We say that $f$ has the shadowing property in some invariant set $K$ if for every $\epsilon>0$ there is $\delta>0$ such that every $\delta$-pseudo orbit in $K$ is $\epsilon$-shadowed by some orbit (not necessarily in $K$).

\begin{proposition} \label{pro:shadow-lift} Suppose that $f$ has the shadowing property in a compact set $K\subset S$. If $\hat{f}\colon \hat{S}\to \hat{S}$ is a lift of $f$, then $\hat{f}$ has the shadowing property in $\pi^{-1}(K)$.
\end{proposition}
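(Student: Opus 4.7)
The plan is to project the given pseudo-orbit upstairs down to $K$, apply the shadowing property of $f$ there, and then lift the resulting shadowing orbit back to $\hat S$. The main difficulty is verifying that the lift genuinely $\hat\epsilon$-shadows the original pseudo-orbit for \emph{every} index $i\in\Z$, which I would handle by an induction based on uniqueness of lifts inside $\epsilon_0$-balls.

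First, given $\hat\epsilon\in(0,\epsilon_0)$, I would invoke Proposition \ref{pro:uniform-lift} to fix a common modulus of uniform continuity $\omega$ for $\hat f$ and $\hat f^{-1}$ on an $\epsilon_0$-neighborhood of $\pi^{-1}(K)$, then choose $\epsilon\in(0,\hat\epsilon)$ with $\omega(\epsilon)+\epsilon<\epsilon_0$. Let $\delta>0$ be supplied by the shadowing property of $f$ on $K$ for this $\epsilon$, and set $\hat\delta=\min\{\delta,\,\epsilon_0-\omega(\epsilon)-\epsilon\}$.

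Next, given a $\hat\delta$-pseudo-orbit $\{\hat x_i\}\subset\pi^{-1}(K)$, since $\pi$ is a local isometry (hence $1$-Lipschitz), the sequence $x_i:=\pi(\hat x_i)$ is a $\delta$-pseudo-orbit of $f$ in $K$. The shadowing property of $f$ yields $x\in S$ with $d(f^i(x),x_i)<\epsilon$ for all $i\in\Z$; the local-isometry property then provides $\hat x\in\pi^{-1}(x)$ with $d(\hat x,\hat x_0)<\epsilon$. I would prove by induction on $|i|$ that $d(\hat f^i(\hat x),\hat x_i)<\epsilon<\hat\epsilon$ for every $i\in\Z$. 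For the inductive step ($i\ge 0$), uniform continuity and the pseudo-orbit condition give
\begin{equation*}
d(\hat f^{i+1}(\hat x),\hat x_{i+1})\le \omega(\epsilon)+\hat\delta<\epsilon_0,
\end{equation*}
while the downstairs estimate $d(f^{i+1}(x),\pi(\hat x_{i+1}))<\epsilon<\epsilon_0$ together with the local isometry produces a \emph{unique} lift $\hat z$ of $f^{i+1}(x)$ within $\epsilon_0$ of $\hat x_{i+1}$, satisfying $d(\hat z,\hat x_{i+1})<\epsilon$. Since $\hat f^{i+1}(\hat x)$ also projects to $f^{i+1}(x)$ and lies in that $\epsilon_0$-ball, it must coincide with $\hat z$, closing the induction with the desired estimate. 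The negative direction runs symmetrically via $\hat f^{-1}$.

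The step I expect to be most delicate is the bookkeeping of constants, so that the inductive bound stays \emph{strictly} below $\epsilon_0$ at every iterate: the uniqueness-of-lift argument, and therefore the whole induction, collapses the instant $d(\hat f^i(\hat x),\hat x_i)$ is allowed to reach $\epsilon_0$. This is why one must subtract $\omega(\epsilon)+\epsilon$ from $\epsilon_0$ when defining $\hat\delta$.
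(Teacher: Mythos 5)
Your proposal is correct and follows essentially the same route as the paper: project the pseudo-orbit to $K$, shadow it downstairs, lift the shadowing orbit, and close an induction using the fact that distinct points of a fiber are separated by more than $\epsilon_0$ (the paper phrases this as forcing a deck transformation $T$ to be the identity, while you phrase it as uniqueness of the lift in the $\epsilon_0$-ball where $\pi$ is an isometry). The only difference is bookkeeping — the paper splits $\epsilon_0$ into thirds where you carry a modulus of continuity $\omega$ — and both make the same triangle-inequality estimate work.
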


\begin{proof} From Proposition \ref{pro:uniform-lift}, given $\epsilon>0$ we may choose $\epsilon'<\min\{\epsilon, \epsilon_0/3\}$ such that $\hat{d}(\hat{f}(x), \hat{f}(y))<\epsilon_0/3$ whenever $d(x,y)<\epsilon'$, and a similar condition for $\hat{f}^{-1}$.

Let $\delta<\epsilon_0/3$ and let $\{\hat{x}_n\}$ be a $\delta$-pseudo orbit of $\hat{f}$ in $\pi^{-1}(K)$. Then $\{\pi(\hat{x}_n)\}$ is a $\delta$-pseudo orbit of $f$ in $K$. If $\delta$ is small enough, then $\{\pi(\hat{x}_n)\}$ is $\epsilon'$-shadowed by the orbit of some point $x\in M$. Since $d(x, \pi(\hat{x}_0))<\epsilon'$, if $\hat{x}$ is the element of $\pi^{-1}(x)$ closest to $\hat{x}_0$ we have $d(\hat{x},\hat{x}_0)<\epsilon'$. We know that $d(\hat{f}(\hat{x}_0),\hat{x}_1)<\delta$, and from our choice of $\epsilon'$ also $d(\hat{f}(\hat{x}), \hat{f}(\hat{x}_0))< \epsilon_0/3$,  so $d(\hat{f}(\hat{x}), \hat{x}_1)<\delta+\epsilon_0/3$. On the other hand, since $d(\pi(\hat{f}(\hat{x})), \pi(\hat{x}_1))<\epsilon'$, we must have that $d(\hat{f}(\hat{x}), T\hat{x}_1)<\epsilon'<\epsilon_0/3$ for some covering transformation $T$. But then $d(T\hat{x}_1, \hat{x}_1)<\delta+2\epsilon_0/3 <\epsilon_0$. This implies that $T=Id$ so that $d(\hat{f}(\hat{x}),\hat{x}_1)<\epsilon'$. In particular $\hat{f}(\hat{x})$ is the element of $\pi^{-1}(f(x))$ closest to $\hat{x}_1$, so we may repeat the previous argument inductively to conclude that  $d(\hat{f}^n(\hat{x}), \hat{x}_n)<\epsilon'$ for all $n\geq 0$. 

If $\hat{y}$ is the element of $\pi^{-1}(f^{-1}(x))$ closest to $\hat{x}_{-1}$, then $d(\hat{y}, \hat{x}_{-1})<\epsilon'$. By the previous argument starting from $\hat{x}_{-1}$ instead of $\hat{x}_0$, we have that $d(\hat{f}(\hat{y}), \hat{x}_0)<\epsilon'<\epsilon_0/3$, and this means that $\hat{f}(\hat{y})$ is the element of $\pi^{-1}(x)$ closest to $\hat{x}_0$ (which we named $\hat{x}$ before). Thus $\hat{y}=\hat{f}^{-1}(\hat{x})$, and we conclude that  $d(\hat{f}^{-1}(\hat{x}), \hat{x}_{-1})<\epsilon'$.
By an induction argument again,  we conclude that $d(\hat{f}^n(\hat{x}), \hat{x}_n)<\epsilon'$ for $n<0$ as well. This completes the proof.
\end{proof}

\subsection{Shadowing and periodic points for surfaces}

First we state the following well-known consequence of Brouwer's plane translation theorem (see, for instance, \cite{franks-brouwer}).
\begin{theorem}[Brouwer] Let $f\colon \R^2\to \R^2$ be an orientation preserving homeomorphism. If $f$ has a nonwandering point, then $f$ has a fixed point. 
\end{theorem}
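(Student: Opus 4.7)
The plan is to prove the contrapositive: if $f\colon\R^2\to\R^2$ is orientation-preserving and has no fixed point, then $f$ has no nonwandering points. The key input is the classical Brouwer plane translation theorem, which guarantees that such an $f$ admits, at every point $x$, a translation arc, namely an embedded arc $\alpha$ joining $x$ to $f(x)$ whose interior is disjoint from $f(\alpha)$.

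First I would invoke Brouwer's lemma on iterates of translation arcs: for any translation arc $\alpha$ of a fixed-point-free orientation-preserving plane homeomorphism, the iterates $\{f^n(\alpha)\}_{n\in\Z}$ are pairwise disjoint apart from possibly shared endpoints. This is the two-dimensional heart of the argument; its standard proof assumes by contradiction that some $f^n(\alpha)$ meets $\alpha$ nontrivially for $n\geq 2$, assembles a Jordan curve from suitable subarcs of $\alpha\cup f(\alpha)\cup\dots\cup f^{n-1}(\alpha)$, and uses the orientation-preserving hypothesis together with an index computation on the bounded component to force a fixed point of $f$, contradicting the standing assumption.

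Next I would thicken the arc to a free disk. Using uniform continuity of $f$ on a compact neighborhood of the finite piece $\alpha\cup f(\alpha)$ together with the disjointness of iterates obtained above, one produces a small topological disk $D$ containing $x$ in its interior such that $f^n(D)\cap D=\emptyset$ for all $n\neq 0$. Hence $x$ is wandering, and since $x$ was arbitrary the nonwandering set is empty, yielding the contrapositive.

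The main obstacle is the translation lemma itself, which is genuinely nontrivial plane topology and is the step that uses orientation preservation in an essential way; extending disjointness from the arc to a neighborhood is then a routine compactness argument. For a concise self-contained treatment I would follow Franks' approach in \cite{franks-brouwer}, which exploits maximal translation arcs to streamline the original combinatorial argument of Brouwer.
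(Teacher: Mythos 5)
The paper does not prove this statement at all: it is quoted as a known consequence of Brouwer's plane translation theory with a pointer to \cite{franks-brouwer}, so there is no in-paper argument to compare against. Your overall architecture (translation arcs, Brouwer's lemma on disjointness of their iterates, then deduce that every point of a fixed-point-free map is wandering) is the standard one, and your description of the proof of the translation lemma itself is accurate.

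However, the last step has a genuine gap. You claim that once $f^n(\alpha)\cap\alpha=\emptyset$ for $|n|\ge 2$, "uniform continuity of $f$ on a compact neighborhood of $\alpha\cup f(\alpha)$" lets you thicken $\alpha$ to a disk $D\ni x$ with $f^n(D)\cap D=\emptyset$ for \emph{all} $n\neq 0$, and that this is "routine compactness." It is not: uniform continuity controls only finitely many iterates (the modulus degrades with $n$), and, more fundamentally, the disjointness of the iterates of the one-dimensional set $\alpha$ does not prevent the orbit of $x$ from accumulating on $x$ while simply avoiding $\alpha$; a priori $f^{n_k}(x)\to x$ is consistent with $f^{n}(\alpha)\cap\alpha=\emptyset$. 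So the arc lemma alone, plus thickening, does not yield that $x$ is wandering. The two standard ways to close this gap are: (i) given a free disk $D\ni x$ and a return $f^n(z)\in D$ with $z\in D$, choose the translation arc for $z$ so that it \emph{contains} a free subarc of $D$ joining $z$ to $f^n(z)$; then $\alpha$ and $f^n(\alpha)$ both contain the point $f^n(z)$, directly contradicting the lemma; or (ii) Franks' orbit-closing argument: compose $f$ with a homeomorphism supported in the free disk $D$ carrying $f^n(z)$ back to $z$, producing a periodic point for a map that is still orientation preserving and fixed point free, and contradict the (translation-arc) fact that such maps have no periodic points. Either of these replaces your thickening step; as written, the proposal does not establish the theorem.
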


Suppose $f\colon S \to S$ is homotopic to the identity, and let $\hat{f}\colon \hat{S}\to \hat{S}$ be the lift of $f$ obtained by lifting the homotopy from the identity. Then it is easy to see that $\hat{f}$ commutes with covering transformations.

\begin{theorem}\label{th:open-periodic} Let $f\colon S \to S$ be a homeomorphism homotopic to the identity. Suppose that there is a compact invariant set $\Lambda$ where $f$ has the shadowing property. Then $f$ has a periodic point. 
\end{theorem}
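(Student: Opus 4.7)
The plan is to lift the dynamics to the universal cover $\hat S$ and reduce to Brouwer's plane translation theorem, using the fact that the natural lift $\hat f$ commutes with every deck transformation to cancel any drift. The sphere $S^2$, which is not covered by the setup of this section, is immediate: $f$ homotopic to the identity on $S^2$ has Lefschetz number $\chi(S^2)=2\ne 0$ and hence a fixed point. Henceforth assume $\hat S \cong \R^2$, which covers the remaining cases $\hat S = \R^2$ and $\hat S = \mathbb H^2$.

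First I build a long periodic pseudo-orbit inside $\Lambda$. Since $\Lambda$ is a nonempty compact invariant set, Zorn's lemma gives a minimal subset $M \subset \Lambda$, and any $x \in M$ is recurrent. Given $\delta>0$, pick $N$ arbitrarily large with $d(f^N(x),x)<\delta$ and extend $x_0=x, x_1=f(x),\ldots,x_{N-1}=f^{N-1}(x)$ $N$-periodically to a bi-infinite sequence $\{x_n\}$; this is a $\delta$-pseudo-orbit of period $N$ entirely contained in $\Lambda$. For a prescribed $\epsilon>0$, choosing $\delta$ small enough the shadowing property yields $y\in S$ with $d(f^n(y),x_n)<\epsilon$ for all $n\in\Z$.

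Next I lift. Let $\hat f$ be the lift of $f$ obtained from the homotopy to the identity, so that $\hat f$ commutes with every deck transformation. By Proposition \ref{pro:lift}, fixing $\hat x_0 \in \pi^{-1}(x)$ determines the unique lifted pseudo-orbit $\{\hat x_n\}$, and there is a deck transformation $T$ with $\hat x_N = T\hat x_0$; by commutativity, $\hat x_{jN} = T^j \hat x_0$ for every $j\ge 0$. Proposition \ref{pro:shadow-lift} then provides $\hat y$ with $d(\hat f^n(\hat y), \hat x_n)<\epsilon$ for all $n$. Now set $\hat g = T^{-1}\hat f^N$, an orientation-preserving homeomorphism of $\hat S \cong \R^2$ (both $\hat f$ and $T$ preserve orientation, because $f$ is homotopic to the identity on an orientable surface). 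Using commutativity, $\hat g^j(\hat y) = T^{-j}\hat f^{jN}(\hat y)$ stays within distance $\epsilon$ of $\hat x_0$ for every $j\ge 0$, so the forward $\hat g$-orbit of $\hat y$ is bounded; its $\omega$-limit is nonempty and consists of nonwandering points of $\hat g$. Brouwer's theorem then yields a fixed point $\hat p$ of $\hat g$, and from $\hat f^N(\hat p)=T\hat p$ we project to $f^N(\pi(\hat p))=\pi(\hat p)$, the required periodic point.

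The main obstacle is that a bounded orbit downstairs need not lift to a bounded orbit upstairs, so naive shadowing cannot directly produce a nonwandering point for $\hat f$ itself. The key device is to absorb the drift of the lifted pseudo-orbit into the single deck transformation $T$ and cancel it by composing with $T^{-1}$; this is legitimate precisely because $f$ is homotopic to the identity, which forces $T$ and $\hat f$ to commute.
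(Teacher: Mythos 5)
Your proposal is correct and follows essentially the same route as the paper: lift via the homotopy so that $\hat f$ commutes with deck transformations, use a recurrent point of $\Lambda$ to build a $T$-periodic pseudo-orbit upstairs, shadow it to get a bounded (hence recurrent) orbit for $T^{-1}\hat f^N$, and apply Brouwer. The only cosmetic difference is that you construct the periodic pseudo-orbit downstairs and lift it (the intermediate shadowing point $y\in S$ you produce is never used), whereas the paper builds it directly in $\hat S$; the substance is identical.
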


\begin{proof} We will also assume that the metric in $S$ is as in the previous section, which we may since the shadowing property in a compact set is independent of the choice of Riemannian metric on $S$.

We may assume that $S$ is not the sphere, since in that case $f$ would have a periodic point by the Lefschetz-Hopf theorem.
Consider the lift $\hat{f}\colon  \hat{S}\to \hat{S}$ of $f$ obtained by lifting the homotopy from the identity (which therefore commutes with the covering transformations).

By Proposition \ref{pro:shadow-lift}, $\hat{f}$ has the shadowing property in $\pi^{-1}(\Lambda)$. Fix $\epsilon>0$, and let $\delta>0$ be such that every $\delta$-pseudo orbit in $\pi^{-1}(K)$ is $\epsilon$-shadowed by an orbit of $\hat{f}$. Since $\Lambda$ is compact and invariant, there is a recurrent point $x\in \Lambda$, and so if $\hat{x}\in \pi^{-1}(x)$ we can find $n>0$ and a covering transformation $T$ such that $d(\hat{f}^n(\hat{x}), T\hat{x})<\delta$. Since $T$ is an isometry and commutes with $\hat{f}$, the sequence
$$\dots, T^{-1}\hat{f}^{n-1}(\hat{x}), \hat{x}, \hat{f}(\hat{x}), \dots, \hat{f}^{n-1}(\hat{x}), T\hat{x}, T\hat{f}(\hat{x}), \dots, T\hat{f}^{n-1}(\hat{x}), T^2\hat{x},\dots$$ 
is a $\delta$-pseudo orbit, and so it is $\epsilon$-shadowed by the orbit of some $\hat{y}\in \hat{S}$. This implies in particular that $d(\hat{f}^{kn}(\hat{y}), T^k\hat{x})<\epsilon$ for all $k\in \Z$, so that $d((T^{-1}\hat{f}^n)^k(\hat{y}), \hat{y})<\epsilon$ for all $k\in \Z$. Note that $T^{-1}\hat{f}^n$ is a homeomorphism of $\hat{S}\simeq \R^2$, and we may assume that it preserves orientation without loss of generality. Moreover, we have from the previous facts that the closure of the orbit of $\hat{y}$ for $T^{-1}\hat{f}^n$ is a compact invariant set; thus $T^{-1}\hat{f}^n$ has a recurrent point, and by Brouwer's Theorem, it has a fixed point. Since $T^{-1}\hat{f}^n$ is a lift of $f^n$, we conclude that $f$ has a periodic point. This completes the proof.
\end{proof}

\begin{corollary}\label{coro:annulus} If $f\colon \A := \mathbb{S}^1\times \R\to \A$ is a homeomorphism of the open annulus, and $f$ has the shadowing property on some compact set $\Lambda$, then $f$ has a periodic point.
\end{corollary}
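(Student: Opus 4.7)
The strategy is to reduce the corollary to Theorem \ref{th:open-periodic} by replacing $f$ with a suitable power. Since the open annulus $\A = \mathbb{S}^1\times \R$ is a non-compact orientable surface of finite type admitting a complete flat metric, Theorem \ref{th:open-periodic} applies verbatim to it once its hypothesis of homotopy to the identity is verified, and $\Lambda$ is also a compact invariant set for any power of $f$.

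The first step is to observe that if $f$ has the shadowing property on $\Lambda$, then so does $f^k$ for every $k\in \N$. Given $\eps>0$, pick $\delta_0$ for $f$ and take $\delta<\delta_0$ small enough (using uniform continuity of $f,\dots,f^{k-1}$ on a neighborhood of $\Lambda$) that any $\delta$-pseudo-orbit $\{y_n\}$ of $f^k$ in $\Lambda$ can be interpolated by the $f$-orbit segments $y_n, f(y_n),\dots,f^{k-1}(y_n), y_{n+1},\dots$ into a $\delta_0$-pseudo-orbit of $f$; the resulting $f$-shadowing orbit, sampled at multiples of $k$, then $\eps$-shadows $\{y_n\}$ under $f^k$.

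The second and key step is to show that $f^2$ is isotopic to the identity. The mapping class group of $\A$ is $\Z/2 \times \Z/2$: an isotopy class of homeomorphism is determined by its action on $\pi_1(\A)=\Z$ (which is $\pm\id$) and on the set of ends of $\A$ (swap or preserve), since an orientation-preserving homeomorphism that induces the identity on $\pi_1$ and preserves both ends is isotopic to the identity (and the orientation behaviour is determined by the other two invariants). Since both generators have order two, $(f^2)_* = \id$ on $\pi_1(\A)$ and $f^2$ preserves each end of $\A$; combined with $f^2$ being orientation-preserving, this gives that $f^2$ is isotopic to the identity.

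With these two ingredients in place, I apply Theorem \ref{th:open-periodic} to $g=f^2\colon \A\to \A$, which is a homeomorphism of the surface $\A$ homotopic to the identity and having the shadowing property on the compact $g$-invariant set $\Lambda$. The conclusion provides a periodic point of $g$, hence a periodic point of $f$. I expect the only mildly delicate point to be the identification of the mapping class group of the \emph{open} annulus (making sure no Dehn-twist-type obstruction appears, which is indeed the case because $\R$ is non-compact and any twist profile can be isotoped to the identity via a compactly-supported deformation); beyond this, the argument is a clean reduction.
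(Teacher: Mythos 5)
Your argument is correct and takes essentially the same route as the paper: the paper likewise reduces to Theorem~\ref{th:open-periodic} by observing that $f^2$ is homotopic to the identity (citing \cite{epstein} for the mapping-class-group fact you spell out via the action on $\pi_1(\A)$ and on the ends) and that $f^2$ inherits the shadowing property on $\Lambda$. The extra details you supply (shadowing for powers, absence of a Dehn-twist obstruction on the open annulus) are accurate but are treated as standard in the paper's one-line proof.
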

\begin{proof} 
Note that $f^2$ is homotopic to the identity (this is an easy consequence of the fact that $f$ is a homeomorphism and $\mathbb{S}^1\times \{0\} \simeq \mathbb{S}^1$ is a deformation retract of $\A$; see for instance \cite{epstein}). Thus, noting that $f^2$ still has the shadowing property in $\Lambda$, the existence of a periodc point follows from Theorem \ref{th:open-periodic} applied to $f^2$.
\end{proof}

\begin{corollary}\label{coro:torus} Let $f\colon \T^2\to \T^2$ be a homeomorphism with the shadowing property. Then $f$ has a periodic point.
\end{corollary}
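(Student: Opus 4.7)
The approach mirrors Corollary \ref{coro:annulus}, reducing to Theorem \ref{th:open-periodic} (or Corollary \ref{coro:annulus} itself) by finding either a power of $f$ homotopic to the identity or a lift with a compact invariant set. Replacing $f$ by $f^2$ if necessary, I may assume $f$ is orientation preserving and denote by $A\in\SL(2,\Z)$ the induced action on $H_1(\T^2;\Z)\cong\Z^2$, which determines the isotopy class of $f$.

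If $A$ has finite order $k$ in $\SL(2,\Z)$, then $f^k$ is homotopic to the identity. Since $f^k$ inherits the shadowing property, Theorem \ref{th:open-periodic} applied to $f^k$ produces a periodic point of $f^k$, and hence of $f$. If $A$ has infinite order, then by the classification of $\SL(2,\Z)$ it is either hyperbolic ($|\tr A|>2$) or parabolic ($|\tr A|=2$, $A\neq \pm I$). In the hyperbolic case, $A^n$ never has $1$ as an eigenvalue, so the Lefschetz number $L(f^n)=\det(I-A^n)$ is nonzero for every $n\geq 1$, and the Lefschetz--Hopf theorem yields a periodic point (the shadowing hypothesis is not even needed). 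The same argument handles the parabolic case with eigenvalue $-1$, since then $L(f)=\det(I-A)=4$.

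The genuinely delicate case is $A$ unipotent parabolic (Dehn-twist type), where $L(f^n)=0$ for every $n$ and Lefschetz is uninformative. Here $A$ fixes a primitive vector $v\in\Z^2$, and I would pass to the annular cover $\A=\R^2/\Z v\cong\mathbb{S}^1\times\R$. Any lift of $f$ to $\R^2$ descends to a homeomorphism $F\colon\A\to\A$ which is isotopic to the identity (since $A$ acts trivially on $\Z^2/\Z v$), and an adaptation of Proposition \ref{pro:shadow-lift} shows that $F$ inherits the shadowing property on preimages of compact $f$-invariant sets. By Corollary \ref{coro:annulus}, it suffices to produce a compact $F$-invariant subset of $\A$, which will project to a periodic orbit of $f$.

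Producing this compact invariant set is the main obstacle: orbits of $F$ could a priori drift unboundedly along the $\R$-factor of $\A$. My strategy is to exploit two things simultaneously: first, the freedom in the choice of lift---different lifts of $f$ to $\A$ differ by integer translations along $\R$, so the corresponding ``vertical displacements'' differ by integers; second, an Atkinson-type recurrence argument applied to the continuous vertical displacement function $\phi=p_2\circ F-p_2\colon\T^2\to\R$ induced by $F$. The shadowing hypothesis would enter precisely to rule out the pathological scenario of a purely irrational vertical rotation number, in the spirit of the fact that an irrational rotation of the circle fails to have the shadowing property.
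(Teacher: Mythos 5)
Your reduction matches the paper's: after passing to $f^2$ and disposing of the isotopy classes with nonzero Lefschetz numbers, the only delicate case is the Dehn-twist (unipotent) type, which both you and the paper attack by lifting to the annular cover and invoking Corollary \ref{coro:annulus}. Your treatment of the finite-order, hyperbolic, and trace $-2$ cases is correct (and indeed needs no shadowing).

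The gap is in the one case that matters. You correctly observe that everything reduces to producing a compact invariant set in the annular cover, but you do not produce one; you only sketch a strategy (choice of lift, Atkinson-type recurrence of the vertical displacement cocycle, shadowing to exclude an irrational vertical rotation number) that, as stated, does not close. Atkinson's theorem gives, for an ergodic measure with zero mean displacement, that the Birkhoff sums of the displacement have $\liminf$ of their absolute value equal to $0$ along almost every orbit; this is recurrence of the vertical coordinate, not boundedness, and a compact invariant set requires a full orbit with \emph{bounded} vertical displacement. Even granting that shadowing forces the vertical rotation number to be an integer (which you do not prove), zero mean does not yield a bounded orbit: zero-mean cocycles routinely have unbounded partial sums. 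Moreover, you are aiming at a compact set invariant under $F$ itself, which is stronger than needed. The missing idea --- which is exactly the mechanism of Theorem \ref{th:open-periodic}, and what the paper does here --- is to use shadowing to manufacture the compact set directly: take a recurrent point $x$ of $f$, lift it to $\til{x}\in\A$, find $n>0$ and a deck transformation $T$ of the cover with $d(\til{f}^n(\til{x}),T\til{x})<\delta$, and concatenate the $T^k$-translates of the segment $\til{x},\dots,\til{f}^{n-1}(\til{x})$ into a bi-infinite $\delta$-pseudo-orbit (this uses that $T$ commutes with $\til{f}$, which holds because the induced action on the deck group of the cover is trivial in the Dehn-twist case). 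An $\epsilon$-shadowing orbit of some $\til{z}$ then satisfies $d\bigl((T^{-1}\til{f}^n)^k(\til{z}),\til{x}\bigr)<\epsilon$ for all $k\in\Z$, so the orbit closure of $\til{z}$ under $T^{-1}\til{f}^n$ --- which is a lift of $f^n$ to the open annulus --- is the required compact invariant set, and Corollary \ref{coro:annulus} finishes the argument. The arbitrary integer drift per period is thus absorbed into the choice of $T$ rather than fought with rotation-number arguments.
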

\begin{proof}
Suppose that $f$ has no periodic points. Using $f^2$ instead of $f$ we may assume that $f$ preserves orientation. By the Lefschetz-Hopf theorem, we only have to consider the cases where $f$ is homotopic to the identity or to a map conjugated to a power of the Dehn twist $D\colon (x,y)\mapsto(x+y, y)$ (otherwise, $f$ has a periodic point even without assuming the shadowing property). 

If $f$ is isotopic to the identity, $f$ has a periodic point by Theorem \ref{th:open-periodic}. 
Now suppose that $f$ is homotopic to a map conjugated to $D^m$ for some $M\in \Z$. Using a homeomorphism conjugated to $f$ instead of $f$, we may assume that $f$ is in fact is homotopic to $D^m:(x,y)\mapsto (x+my, y)$. Let $\tau\colon \A\to \T^2$ be the covering map $(x,y)\mapsto (x+\Z,y)$, where $\A=\mathbb{S}^1\times \R$. Since $f$ is isotopic to $D^m$, we can lift $f$ by $\tau$ to a homeomorphism $\til{f}\colon \A\to \A$, which is homotopic to the identity.

Note that in the proof of Propositions \ref{pro:lift} and \ref{pro:shadow-lift} we did not use the fact that $\pi$ was the universal covering map. Thus by the same argument applied to the covering $\tau$ one sees that $\til{f}$ has the shadowing property in $\A = \tau^{-1}(\T^2)$. Moreover, following the proof of Theorem \ref{th:open-periodic}, we see that there is a point $\til{z}\in \A$ and a covering transformation $T\colon \A\to \A$ such that $d(\til{f}^{kn}(\til{z}), T\til{z})<\epsilon$ for all $k\in \Z$. But then, noting that T commutes with $\til{f}$, we see that $T^{-1}\til{f}^{n}$ is a lift of $f^n$ which has a compact invariant set $\Lambda$ where the shadowing property holds (namely, the closure of the orbit of $\til{z}$). The previous corollary applied to $\til{f}$ implies that $\til{f}$ (and thus $f$) has a periodic point.

\end{proof}

\subsection{Lyapunov functions and $\epsilon$-transitive components}
\label{sec:etrans}

Let $f\colon X\to X$ be a homeomorphism of a compact metric space $X$.
Denote by $\CR(f)$ the chain recurrent set of $f$, \ie $x\in \CR(f)$ if for every $\epsilon>0$ there is an $\epsilon$-pseudo-orbit for $f$ connecting $x$ to itself.

The chain recurrent set is partitioned into chain transitive classes, defined by the equivalence relation $x\sim y$ if for every $\epsilon>0$ there is an $\epsilon$-pseudo-orbit from $x$ to $y$ and another from $y$ to $x$.
The chain transitive classes are compact invariant sets.

Recall that a complete Lyapunov function for $f$ is a continuous function $g\colon X\to \R$ such that
\begin{enumerate}
\item $g(f(x))<g(x)$ if $x\notin \CR(f)$;
\item If $x,y\in \CR(f)$, then $g(x) = g(y)$ if and only if $x$ and $y$ are in the same chain transitive component;
\item $g(\CR(f))$ is a compact nowhere dense subset of $\R$.
\end{enumerate}

Given a Lyapunov function as above, we say that $t\in \R$ is a \emph{regular value} if $g^{-1}(t)\cap \CR(f)=\emptyset$. Note that the set of regular values is open and dense in $\R$.

We recall the following result from Conley's theory (see \cite{franks-misiurewicz-conley}).
\begin{theorem} If $f\colon X\to X$ is a homeomorphism of a compact metric space, then there is a complete Lyapunov function $g\colon X\to \R$ for $f$.
\end{theorem}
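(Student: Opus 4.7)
The plan is to follow Conley's original strategy, which is based on attractor–repeller pairs. For each open set $U \subset X$ satisfying $\overline{f(U)} \subset U$ (a \emph{trapping region}), one forms the attractor $A_U = \bigcap_{n \geq 0} f^n(\overline{U})$ and the dual repeller $A_U^* = X \setminus \bigcup_{n \geq 0} f^{-n}(U)$. A standard construction---for instance by taking a ratio of truncated distances, $d(x,A_U)/(d(x,A_U)+d(x,A_U^*))$, and then averaging along the orbit so as to enforce strict monotonicity---produces a continuous weak Lyapunov function $g_U \colon X \to [0,1]$ with $g_U^{-1}(0) = A_U$, $g_U^{-1}(1) = A_U^*$, and $g_U(f(x)) < g_U(x)$ whenever $x \notin A_U \cup A_U^*$.

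The second ingredient is the classical identity
$$\CR(f) \;=\; \bigcap_U \bigl(A_U \cup A_U^*\bigr),$$
where $U$ ranges over all trapping regions. This is the dynamical core of Conley's decomposition theorem and is a direct consequence of the definition of chain recurrence together with the characterization of attractors as intersections of forward iterates of trapping regions.

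The third step is to reduce to a countable subfamily. Since $X$ is compact metric, hence second countable, one can extract a countable family $\{U_n\}_{n\in\N}$ of trapping regions whose attractor–repeller pairs still satisfy $\CR(f) = \bigcap_n (A_{U_n} \cup A_{U_n}^*)$ and, moreover, separate any two distinct chain transitive components (whenever $x \not\sim y$, there exists $n$ with $x \in A_{U_n}$ and $y \in A_{U_n}^*$ or conversely). One then sets
$$g(x) \;=\; \sum_{n=1}^\infty 3^{-n}\, g_{U_n}(x).$$
Uniform convergence gives continuity. Condition (1) follows because, for $x \notin \CR(f)$, at least one summand drops strictly at $x$ while all others are nonincreasing. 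Condition (2) follows because on $\CR(f)$ every $g_{U_n}$ takes only the values $0$ or $1$, so the base-$3$ digits of $g(x)$ record the sequence of attractor-memberships of $x$, and by the choice of family this sequence determines the chain transitive class. Condition (3) follows because $g(\CR(f))$ is contained in the Cantor-type set of ternary numbers with digits in $\{0,1\}$, which is compact and nowhere dense.

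The main obstacle is the third step: extracting a countable family of trapping regions that simultaneously cuts out $\CR(f)$ and distinguishes every pair of chain inequivalent points. This requires combining second countability with the fact that any two such points are separated by some attractor–repeller pair, and then approximating the separating trapping region by an open set drawn from a fixed countable basis, while preserving the trapping property $\overline{f(U)} \subset U$.
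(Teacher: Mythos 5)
The paper states this result as a known theorem from Conley theory and cites it without proof, so there is no in-paper argument to compare against; your outline is the standard attractor--repeller proof of Conley's fundamental theorem of dynamical systems, and it is correct, including the reduction to a countable family of attractors and the base-$3$ encoding that yields the nowhere dense image. The only step worth making explicit is the construction of $g_U$: one must first pass from the distance ratio $u(x)=d(x,A_U)/\bigl(d(x,A_U)+d(x,A_U^*)\bigr)$ to the nonincreasing function $v(x)=\sup_{n\geq 0}u(f^n(x))$ and only then average, setting $g_U(x)=\sum_{n\geq 0}2^{-n-1}v(f^n(x))$, since averaging $u$ itself along the orbit need not produce a function that decreases under $f$.
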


Now suppose $f\colon S\to S$ is a homeomorphism of the compact surface $S$.
Given a fixed $\epsilon>0$, we say that $x,y\in \CR(f)$ are $\epsilon$-related if there are $\epsilon$-pseudo orbits from $x$ to $y$ and from $y$ to $x$. This is an equivalence relation in $\CR(f)$; we call the equivalence classes $\epsilon$-transitive components. It is easy to see that there are finitely many $\epsilon$-transitive components \cite[Lemma 1.5]{franks-realizing}. Moreover, in \cite[Theorem 1.6]{franks-realizing} it is proved that every $\epsilon$-transitive component is of the form $g^{-1}([a,b])\cap \CR(f)$ for some complete Lyapunov function $g$ for $f$ and $a,b\in \R$ are regular values. Note that $\epsilon$-transitive components are compact and invariant.

\begin{proposition} \label{pro:isolating} Let $\Lambda$ be an $\epsilon$-transitive component for some $\epsilon>0$. Then there are compact surfaces with boundary $M_1\subset M_2\subset S$ such that $f(M_i)\subset \inter M_i$ for $i=1,2$, and 
$$\Lambda = (M_2\sm M_1)\cap \CR(f).$$
\end{proposition}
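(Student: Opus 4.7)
My plan is to produce $M_1$ and $M_2$ by smoothing suitable sublevel sets of a Lyapunov function. By the theorem of Franks cited just before the statement, there exist a complete Lyapunov function $g\colon S\to \R$ and regular values $a\leq b$ with $\Lambda = g^{-1}([a,b])\cap \CR(f)$. Since $g(\CR(f))$ is compact and nowhere dense in $\R$, the set of regular values is open and dense, so I can pick regular values $a' < a$ and $b' > b$ so close to $a$ and $b$ that the intervals $(a',a)$ and $(b,b')$ contain no values of $g$ on $\CR(f)$.

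Set $C_1 = g^{-1}((-\infty,a'])$ and $C_2 = g^{-1}((-\infty,b'])$. These compact sets satisfy $C_1\subset \inter(C_2)$ (since $a'<b'$) and $f(C_i)\subset \inter(C_i)$: on $g^{-1}(a')$ regularity forces $x\notin \CR(f)$ and hence $g(f(x))<g(x)=a'$, so $f(g^{-1}(a'))\subset g^{-1}((-\infty,a'))\subset \inter(C_1)$; combined with $g\circ f\leq g$ on the rest of $C_1$, this yields $f(C_1)\subset \inter(C_1)$, and the argument for $C_2$ is identical. By the choice of $a', b'$ one checks $(C_2\setminus C_1)\cap \CR(f) = \Lambda$. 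I would also record the auxiliary fact $C_i\cap \CR(f)\subset f(C_i)$: for $x\in C_i\cap \CR(f)$, the point $f^{-1}(x)$ lies in $\CR(f)$ in the same chain transitive class as $x$, so $g(f^{-1}(x))=g(x)\leq a'$ (resp.~$\leq b'$), placing $f^{-1}(x)$ in $C_i$ and $x$ in $f(C_i)$.

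The main (and essentially only non-trivial) step is to replace $C_1$ and $C_2$ by compact surfaces with boundary, since $g$ is only continuous and so the $C_i$ need not be $2$-submanifolds with boundary. The cleanest way is a bump-function construction: pick $\phi\in C^\infty(S,[0,1])$ with $\phi\equiv 1$ on the compact set $f(C_1)$ and supported in a compact subset of the open set $\inter(C_1)$, choose a regular value $c\in (0,1)$ of $\phi$ via Sard's theorem, and set $M_1:=\{\phi\geq c\}$. Then $M_1$ is a compact surface with smooth boundary, and $f(C_1)\subset \inter(M_1)\subset M_1\subset \inter(C_1)$. An identical construction, now applied to the compact set $M_1\cup f(C_2)\subset \inter(C_2)$, produces $M_2$ satisfying $M_1\cup f(C_2)\subset \inter(M_2)\subset M_2\subset \inter(C_2)$.

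Finally I would verify the properties claimed in the statement. The inclusion $M_1\subset M_2$ holds by construction, and $f(M_i)\subset f(C_i)\subset \inter(M_i)$ gives the forward invariance. To compute the chain recurrent intersection, I would show $M_i\cap \CR(f) = C_i\cap \CR(f)$: the inclusion $\subset$ follows from $M_i\subset C_i$, while the reverse uses the auxiliary observation, $C_i\cap \CR(f)\subset f(C_i)\subset \inter(M_i)\subset M_i$. Combined with $(C_2\setminus C_1)\cap \CR(f)=\Lambda$, this yields $(M_2\setminus M_1)\cap \CR(f)=\Lambda$, as required. The only subtle point in the whole argument is the bump-function smoothing; everything else is bookkeeping with the defining properties of a complete Lyapunov function.
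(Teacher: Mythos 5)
Your argument is correct, and it follows the same overall strategy as the paper: invoke the cited result of Franks to write $\Lambda = g^{-1}([a,b])\cap \CR(f)$ for regular values $a,b$ of a complete Lyapunov function, pass to nearby regular values with no values of $g(\CR(f))$ in between, and take suitably smoothed sublevel sets. The one place where you genuinely diverge is the smoothing step, which is also the only non-trivial content of the proposition. The paper perturbs the Lyapunov function itself: it replaces $g$ by a function $\til{g}$ coinciding with $g$ near $\CR(f)$ and $C^1$ near the levels $a,b$, asserts that a $C^0$-small such perturbation is still a complete Lyapunov function, and takes sublevel sets at differentiably regular values of $\til{g}$. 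You instead keep the continuous sublevel sets $C_i$, establish the strict attraction $f(C_i)\subset \inter C_i$ directly from the Lyapunov axioms, and interpolate a genuine compact subsurface $M_i$ between the compact set $f(C_i)$ (together with $M_1$, in the case of $M_2$, to guarantee the nesting) and the open set $\inter C_i$ via a bump function and Sard's theorem; the identity $M_i\cap\CR(f)=C_i\cap\CR(f)$ is then recovered from your auxiliary observation $C_i\cap\CR(f)\subset f(C_i)$. Your route is arguably cleaner, since it never needs to re-verify that the perturbed function satisfies the Lyapunov axioms (in particular the strict decrease off $\CR(f)$, which is the one delicate point left implicit in the paper's sketch); the paper's route keeps $M_1,M_2$ as literal sublevel sets of a Lyapunov-like function, which is what its figure depicts. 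Both arguments are complete and yield the same conclusion.
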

\begin{proof} Let $g$ be a complete Lyapunov function for $f$ such that the $\epsilon$-transitive component $\Lambda$ verifies that  $\Lambda=g^{-1}([a,b]) \cap \CR(f)$ for some regular values $a<b$.
Consider a function $\til{g}$ which  coincides with $g$ in a neighborhood of $\CR(f)$ and is $C^1$ in a neighborhood of $\{a,b\}$. If $\til{g}$ is $C^0$-close enough to $g$, it will be a complete Lyapunov function for $f$. Choose regular values $a'<b'$ (in the differentiable sense) for $\til{g}$ such that there are no points of $g(CR(f))$ between $a$ and $a'$ or between $b$ and $b'$, and define $M_b = \til{g}^{-1}((-\infty,b'])$, $M_a=\til{g}^{-1}((-\infty,a'])$. It is easy to verify that $M_a$ and $M_b$ satisfy the required properties (see Figure \ref{fig:reducing}).
\end{proof}
\begin{figure}
\begin{center}
\includegraphics[height=5cm]{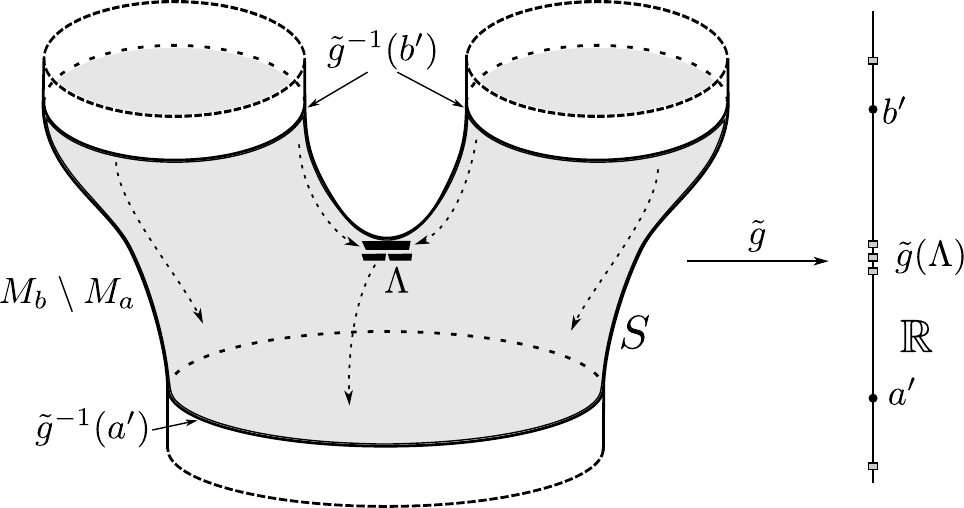}
\caption{Example situation in the proof of Proposition \ref{pro:isolating}}
\label{fig:reducing}
\end{center}
\end{figure}

\subsection{Reducing neighborhoods for $\epsilon$-transitive components}

First we recall some definitions. If $T$ is a non-compact surface, a \emph{boundary representative} of $T$ is a sequence $P_1\supset P_2\supset\cdots$ of connected unbounded (\ie not relatively compact) open sets in $T$ such that $\bd_T P_n$ is compact for each $n$ and for any compact set $K\subset T$, there is $n_0>0$ such that $P_n\cap K=\emptyset$ if $n>n_0$ (here we denote by $\bd_T P_n$ the boundary of $P_n$ in $T$). Two boundary representatives $\{P_i\}$ and $\{P_i'\}$ are said to be equivalent if for any $n>0$ there is $m>0$ such that $P_m\subset P_n'$, and vice-versa. The \emph{ideal boundary} $\mathrm{b_I}T$ of $T$ is defined as the set of all equivalence classes of boundary representatives. We denote by $\hat{T}$ the space $T\cup \mathrm{b}_I T$ with the topology generated by sets of the form $V \cup V'$, where $V$ is an open set in $T$ such that $\bd_T V$ is compact, and $V'$ denotes the set of elements of $\mathrm{b_I} T$ which have some boundary representative $\{P_i\}$ such that $P_i\subset V$ for all $i$. We call $\hat{T}$ the \emph{ends compactification} or \emph{ideal completion} of $T$.

Any homeomorphism $f\colon T\to T$ extends to a homeomorphism $\hat{f}\colon \hat{T}\to \hat{T}$ such that $\hat{f}|_{T} = f$. If $\hat{T}$ is orientable and $\mathrm{b_I} T$ is finite, then $\hat{T}$ is a compact orientable boundaryless surface. See \cite{richards} and \cite{ahlfors-sario} for more details.

The following lemma states that we can see an $\epsilon$-transitive component $\Lambda$ as a subset of an $f$-invariant open subset of $S$ such that the chain recurrent set of the extension of $f$ to its ends compactification consists of finitely many attracting or repelling periodic points together with the set $\Lambda$. 

\begin{lemma} \label{lem:reducing} Let $S$ be a compact orientable surface, and let $f\colon S\to S$ be a homeomorphism. If $\Lambda$ is an $\epsilon$-transitive component, then there is an open invariant set $T\subset S$ with finitely many ends such that each end is either attracting or repelling and $\CR(f|_T) = \Lambda$. 
\end{lemma}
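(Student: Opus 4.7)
The plan is to isolate $\Lambda$ between a maximal attractor and a maximal repeller arising from the trapping regions of Proposition~\ref{pro:isolating}, and take $T$ to be the complement. I would apply Proposition~\ref{pro:isolating} to obtain compact surfaces $M_1 \subset M_2 \subset S$ with $f(M_i) \subset \inter M_i$ and $\Lambda = (M_2 \setminus M_1) \cap \CR(f)$, and then define
\[
A = \bigcap_{n \geq 0} f^n(M_1) \subset \inter M_1, \qquad R = \bigcap_{n \geq 0} f^{-n}(S \setminus \inter M_2) \subset S \setminus M_2,
\]
both compact and $f$-invariant. The first candidate for $T$ is $S \setminus (A \cup R)$, which is open and $f$-invariant because $A \cup R$ is closed and $f$-invariant.

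To see $\CR(f|_T) = \Lambda$, the key observation is that $\CR(f) \cap M_1 \subset A$: if $x \in \CR(f) \cap M_1$, then for $\delta$ small enough that every $\delta$-pseudo orbit starting in $M_1$ stays there (possible because $f(M_1) \subset \inter M_1$), the pseudo orbits joining $x$ to itself can be taken inside $M_1$, so $x$ is chain recurrent for $f|_{M_1}$ and hence lies in the maximal invariant set $A$. Symmetrically $\CR(f) \cap (S \setminus \inter M_2) \subset R$, which together with $\CR(f) \cap (M_2 \setminus M_1) = \Lambda$ gives $\CR(f) \cap T = \Lambda$. Since $\CR(f|_T) \subset \CR(f) \cap T$ automatically, and internal chain transitivity of $\Lambda$ lets small pseudo-orbits from a point of $\Lambda$ to itself be chosen in any prescribed neighborhood of $\Lambda$, in particular in $T$, we obtain $\CR(f|_T) = \Lambda$.

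For the end structure, an end of $T$ arises from approaching $A$ (reachable only through $M_1$) or $R$ (reachable only from outside $M_2$). Since $\bd M_1 \cup \bd M_2$ has only finitely many components, and the trapping conditions force the dynamics in a collar of each component to push into $M_1$ (respectively out of $M_2$), one organizes the ends of $T$ according to these finitely many boundary components. Each end near $\bd M_1$ is then attracting: forward orbits of points in $T$ close to the end enter $M_1$ and converge to $A$, hence leave every compact subset of $T$ and approach the end in the ends compactification $\hat T$. Ends near $\bd M_2$ are repelling by time reversal.

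The main technical obstacle is that $A$ or $R$ may have complicated (for instance Cantor-like) topology, in which case $S \setminus (A \cup R)$ can itself have infinitely many ends. To bypass this I would replace $T$ by the smaller invariant open set $T = \bigcup_{n \in \Z} f^n(\inter M_2 \setminus M_1)$, the full $f$-orbit of the finite-end collar region $\inter M_2 \setminus M_1$. One still obtains $\CR(f|_T) = \Lambda$ by the same argument, while the topology of $T$ is now controlled by iterates of a region with only finitely many ends, so that any approach to an end of $T$ must thread through one of the finitely many components of $\bd M_1 \cup \bd M_2$, yielding finitely many ends of the attracting/repelling type described above. Making this count of ends rigorous is the delicate step of the argument.
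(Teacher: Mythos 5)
Your setup is the same as the paper's (apply Proposition~\ref{pro:isolating}, discard the maximal attractor $A$ in $M_1$ and the maximal repeller $R$ outside $M_2$, and check $\CR(f|_T)=\Lambda$ via the trapping property of the $M_i$ -- that part of your argument is fine), but the step you flag as ``delicate'' is precisely the content of the lemma, and your heuristic for it does not close the gap. Knowing that every escape to an end of $T$ must pass through one of the finitely many circles of $\bd M_1\cup\bd M_2$ does \emph{not} bound the number of ends: a single boundary circle of $M_1$ can branch into infinitely many ends deeper inside $M_1\sm A$. Concretely, the part of $T$ lying inside $M_1$ is exhausted by the increasing union of the compact surfaces $M_1\sm \inter f^n(M_1)=\bigcup_{k=0}^{n-1}f^k(L_1)$ with $L_1=M_1\sm\inter f(M_1)$, and if some component of $L_1$ were, say, a pair of pants, this increasing union could acquire a Cantor set of ends (and unbounded genus); your replacement $T=\bigcup_{n\in\Z}f^n(\inter M_2\sm M_1)$ is governed by exactly the same pieces, so it does not bypass the problem.

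The missing idea is topological: one must show that each $L_i=M_i\sm\inter f(M_i)$ is a disjoint union of annuli. The paper does this by first discarding the components of $M_i$ and of $S\sm\inter M_i$ that miss $\CR(f)$, then computing $\chi(M_i)=\chi(L_i)+\chi(f(M_i))$ and $\chi(M_i)=\chi(f(M_i))$ to get $\chi(L_i)=0$, and finally ruling out disk components of $L_i$ (a disk component would produce a component of $M_i$ or of $S\sm\inter M_i$ disjoint from $\CR(f)$). Only then is $\til L_1=\bigcup_{n\ge0}f^n(L_1)$ a disjoint union of half-open annuli $\mathbb{S}^1\times[0,1)$, one per boundary circle of $M_1$, each contributing a single attracting end (and dually for $\til L_2$ and repelling ends). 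Without this Euler-characteristic argument your proof does not establish either the finiteness of the set of ends or their annular attracting/repelling structure, so as written the proposal is incomplete at its central step.
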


\begin{proof} 
Let $M_1\subset M_2$ be the compact surfaces with boundary given by Proposition $\ref{pro:isolating}$. Removing some components of $M_1$ and $M_2$ if necessary, we may assume that every connected component of $M_i$ intersects $\CR(f)$ for $i=1,2$. Similarly, we may assume that every connected component of $S\sm \inter M_i$ intersects $\CR(f)$ for $i=1,2$ (removing some components of $S\sm M_i$ if necessary). This does not modify the properties of $M_i$ given by Proposition \ref{pro:isolating}.

 Note that $f(M_i)$ and $L_i=M_i\setminus \inter f(M_i)$  are compact surfaces with boundary whose union is $M_i$ and they intersect only at some boundary circles (at least one for each $M_i$, since neither $M_1$ nor $S\sm M_2$ are empty), so that their Euler characteristics satisfy $$\chi(M_i) = \chi(L_i) + \chi(f(M_i)).$$ 
But since $\chi(M_i) = \chi(f(M_i))$, it follows that $\chi(L_i)=0$. Thus, to show that $L_i$ is a union of annuli, it suffices to show that no connected component of $L_i$ is a disk (since that implies that the Euler characteristic of each connected component of $L_i$ is at most $0$).

Suppose that some (closed) disk $D$ is a connected component of $L_i$. Then the boundary $\bd D$ is a component of $\bd M_i \cup \bd f(M_i)$. 

Suppose first that $\bd D$ is a boundary circle of $M_i$. Since $D\subset L_i = M_i\sm \inter f(M_i)$, it follows that $D\subset M_i$. Thus, $D$ is a connected component of $M_i$. On the other hand, since $D\cap \inter f(M_i)=\emptyset$ and $f(M_i)\subset \inter M_i$ is disjoint from $\bd M_i\supset \bd D$, it follows that $D\cap f(M_i)=\emptyset$. Since $f^n(D)\subset f^n(M_i)\subset f(M_i)$ for $n\geq 1$, we can conclude that $D\cap \CR(f)=\emptyset$, because the distance from $f(M_i)$ to $D$ is positive, so that no $\epsilon$-pseudo orbit starting in $D$ can return to $D$ if $\epsilon$ is small enough. This contradicts the fact that every component of $M_i$ intersects $\CR(f)$ as we assumed in the beginning of the proof.

Now suppose that $\bd D$ is a component of $\bd f(M_i)$. Since $D\cap \inter f(M_i) = \emptyset$, it follows that $D$ is a connected component of $S\sm \inter f(M_i)$, so that $D'=f^{-1}(D)$ is a component of $S\sm \inter M_i$. But $f(D')\subset M_i$, which implies that $f^n(D')\subset M_i$ for all $n>0$. As before, this implies that $D'$ is disjoint from $\CR(f)$, contradicting the fact that every component of $S\sm \inter M_i$ intersects $\CR(f)$. This completes the proof that $L_i$ is a disjoint union of annuli.

Note that the number of (annular) components of $L_i$ coincides with the number of boundary components of $M_i$, since $L_i$ is a neighborhood of $\bd M_i$ in $M_i$.
The previous argument also shows that $M_i\sm \inter f^n(M_i)$ is a union of the same number of annuli if $n>0$. In fact, $M_i\sm \inter f^n(M_i) = \cup_{k=1}^{n-1} f^k(L_i)$, and the union is disjoint (modulo boundary). Thus the sets
$$\til{L}_1=\bigcup_{n\geq 0} f^n(L_1)\text{ and } \til{L}_2 = \bigcup_{n<0} f^n(L_2)$$
are increasing unions of annuli sharing one of their boundary components, hence they are both homeomorphic to a disjoint union of sets of the form $\mathbb{S}^1\times [0,1)$. Moreover, $\bigcap_{i>0} f^n(\til{L}_1) = \emptyset = \bigcap_{n<0} f^n(\til{L}_2)$. Let $k_i$ be the number of components of $\til{L}_i$ (or, which is the same, the number of boundary components of $M_i$).

Let $N=M_2\sm \inter M_1$ and write 
$$T=\til{L}_1\cup N \cup \til{L}_2.$$
It is easy to check that $f(T)=T$. Moreover, $\bd N = \bd \til{L}_1 \cup \bd{L}_2$, so $T$ is an open surface with $k_1+k_2$ ends. If we denote by $\hat{T}$ the ends compactification of $T$, and by $\hat{f}$ the extension of $f$ to $\hat{T}$ (which is a homeomorphism), we have that $\hat{f}$ has exactly $k_1+k_2$ periodic points, which are the ends of $T$. The ends in $\til{L}_1$ give rise to periodic attractors, and the ones in $\til{L}_2$ to periodic repellers. Since $\CR(f|_{\til{L}_i}) = \emptyset$ for $i=1,2$ and $\CR(f|_N)=\Lambda$, the surface $T$ has the required properties.
\end{proof}

\subsection{Aperiodic $\epsilon$-transitive components}

We now show that if an $\epsilon$-transitive component $\Lambda$ has no periodic points, then the neighborhood $T$ of $\Lambda$ in the statement of Lemma \ref{lem:reducing} is a disjoint union of annuli.

\begin{lemma} \label{lem:trace} Let $A$ be a matrix in $\SL(m,\Z)$. Then there is $n>0$ such that $\tr{A^n}\geq m$.
\end{lemma}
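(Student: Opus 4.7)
The plan is to use Dirichlet's simultaneous approximation theorem to choose a power $A^n$ whose eigenvalues all lie close to the positive real axis, and then to combine the AM-GM inequality with integrality of the trace. Let $\lambda_1,\dots,\lambda_m$ denote the eigenvalues of $A$ in $\mathbb{C}$, counted with algebraic multiplicity, and write $\lambda_j = r_j e^{2\pi i \theta_j}$ with $r_j = |\lambda_j| > 0$. Since $A \in \SL(m,\Z)$, we have $\prod_j r_j = |\det A| = 1$.

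First, for any $\eps \in (0,1/4)$, Dirichlet's theorem produces infinitely many positive integers $n$ with $\|n\theta_j\| < \eps$ for every $j$ (here $\|\cdot\|$ denotes distance to the nearest integer). For such $n$ one has $\cos(2\pi n \theta_j) \geq \cos(2\pi\eps) > 0$, so
$$\Re(\lambda_j^n) = r_j^n \cos(2\pi n \theta_j) \geq r_j^n \cos(2\pi \eps) \geq 0.$$
As $\tr(A^n) = \sum_j \lambda_j^n$ is real, summing the real parts gives $\tr(A^n) \geq \cos(2\pi\eps) \sum_j r_j^n$. The AM-GM inequality, combined with $\prod_j r_j = 1$, yields $\sum_j r_j^n \geq m \left(\prod_j r_j^n\right)^{1/m} = m$, and hence $\tr(A^n) \geq m \cos(2\pi\eps)$.

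To finish, fix $\eps > 0$ so small that $\cos(2\pi\eps) > 1 - 1/m$. Then $\tr(A^n) > m - 1$ for every $n$ supplied by Dirichlet's theorem. But $\tr(A^n)$ is an integer, since $A^n \in \SL(m,\Z)$, so $\tr(A^n) \geq m$, which is the desired conclusion. The only nontrivial input is Dirichlet's theorem; the main subtlety — and the step I expect to need the most care — is noticing that AM-GM together with integrality of the trace absorbs the small loss introduced by the factor $\cos(2\pi\eps)$, so that no further control on the angles $\theta_j$ (via, e.g., Kronecker's theorem on algebraic integers on the unit circle) is needed.
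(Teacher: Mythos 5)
Your proof is correct and follows essentially the same strategy as the paper's: use simultaneous Diophantine approximation to bring all eigenvalue arguments near integers so that $\tr(A^n)$ is bounded below by (almost) $\sum_j r_j^n$, then invoke integrality of the trace to absorb the $\cos(2\pi\eps)$ loss. The only refinement is your use of AM--GM together with $\prod_j r_j = |\det A| = 1$ to get $\sum_j r_j^n \geq m$ in one stroke, which cleanly replaces the paper's case distinction between all $r_j=1$ (where integrality forces the trace up to $m$) and some $r_j>1$ (where $r_j^{2n}$ is made large by taking $n$ large).
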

\begin{proof}
Let $r_1e^{2\pi i\theta_1},\dots, r_me^{2\pi i\theta_m}$ be the eigenvalues of $A$. Given $\epsilon>0$, we can find an arbitrarily large integer $n$ such that  $(2n\theta_1,\dots, 2n\theta_m)$ is arbitrarily close to a vector of integer coordinates, so that $\cos(4n\pi \theta_k)>1-\epsilon$. Thus $\tr{A^{2n}}=\sum_k r_k^n \cos(4n\pi \theta_k)>(1-\epsilon)\sum_k r_k^{2n}$. If $r_1=\cdots =r_m = 1$, then $\tr{A^{2n}} \geq m(1-\epsilon)$, and since $\tr{A^{2n}}$ is an integer, if $\epsilon$ was chosen small enough this implies that $\tr{A^{2n}} = m$. Now, if some $r_k\neq 1$, choosing a different $k$ we may assume $r_k>1$, so that $\tr{A^{2n}} >(1-\epsilon)r_k^{2n}$, and if $n$ is large enough and $\epsilon<1$ this implies that $\tr{A^{2n}}>m$. 
\end{proof}

\begin{theorem}\label{th:annuli}  Let $S$ be a compact orientable surface, and let $f\colon S\to S$ be a homeomorphism. If $\Lambda$ is an $\epsilon$-transitive component without periodic points, then either $S=\T^2$ and $f$ has no periodic points, or there is a disjoint union of periodic annuli $T\subset S$ such that the ends of each annulus are either attracting or repelling and $\CR(f|_T) = \Lambda$. 
\end{theorem}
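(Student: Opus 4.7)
The plan is to work with the neighborhood $T$ produced by Lemma \ref{lem:reducing} and to show that each connected component of $T$ is an open annulus, unless we fall into the exceptional case. Lemma \ref{lem:reducing} already guarantees that $T$ has finitely many ends (each attracting or repelling for the extension of $f$ to the ends compactification) and that $\CR(f|_T)=\Lambda$. What remains is the topological statement: each component of $T$ has genus zero (hence is an annulus), except in a degenerate torus case.

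I would fix a connected component $T_0$ of $T$ and, using that $T$ has only finitely many components and ends, pass to a power $g:=f^k$ that preserves orientation, leaves $T_0$ invariant, and fixes each end of $T_0$ individually. Passing to the ends compactification $\hat{T}_0$, the extension $\hat{g}\colon\hat{T}_0\to\hat{T}_0$ is an orientation-preserving homeomorphism of a closed orientable surface. The crucial dynamical input is that, by the hypothesis on $\Lambda$, no iterate of $\hat{g}$ can have a fixed point outside of the ends: any such point would lie in $\CR(g|_{T_0})\subset\Lambda$ and would then be a periodic point of $f$. Since each end is a topological sink or source of every $\hat{g}^n$, it has fixed-point index $+1$ in dimension two.

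Writing $e$ for the number of ends of $T_0$, $\gamma$ for the genus of $\hat{T}_0$, and $A=\hat{g}_*\in\SL(2\gamma,\Z)$ for the induced action on $H_1(\hat{T}_0;\Z)$, the Lefschetz-Hopf formula gives
\[
e \;=\; L(\hat{g}^n) \;=\; 2 - \tr(A^n)\qquad \text{for every } n\geq 1.
\]
Lemma \ref{lem:trace} furnishes some $n$ with $\tr(A^n)\geq 2\gamma$, hence $0\leq e\leq 2-2\gamma$, forcing $\gamma\leq 1$. The case $\gamma=0$ yields $e=2$, so $T_0$ is a twice-punctured sphere, i.e.\ an open annulus; applying this to every component of $T$ produces the required disjoint union of periodic annuli. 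The case $\gamma=1$ yields $e=0$, so $T_0$ is already compact and by connectedness of $S$ one has $S=T_0=\hat{T}_0=\T^2$; since $\CR(f)=\Lambda$ has no periodic points, neither does $f$, and we land in the exceptional case of the statement.

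The main obstacle I foresee is the passage to the right iterate: $f$ need not preserve orientation, the components of $T$ may be permuted by $f$, and even within a fixed component the ends may be permuted among themselves. Taking a high enough power to stabilize all of these simultaneously is routine but must be set up before Lefschetz-Hopf is invoked. Once that bookkeeping is done, Lemma \ref{lem:trace} and the elementary fact that sinks and sources in dimension two both have fixed-point index $+1$ carry the rest of the argument.
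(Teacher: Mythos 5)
Your proposal is correct and follows essentially the same route as the paper: reduce to the ends compactification via Lemma \ref{lem:reducing}, pass to a power fixing orientation and the ends, and combine the Lefschetz--Hopf formula (all fixed points being index-$+1$ sinks/sources) with Lemma \ref{lem:trace} to force first Betti number $\le 1$, hence genus $0$ with two ends or the torus case. The only cosmetic difference is that the paper disposes of the no-ends (torus) case separately at the outset, whereas you extract it from the same trace inequality.
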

\begin{proof}

Let $T$ be the surface given by Lemma \ref{lem:reducing}. If some component $T'$ of $T$ has no ends at all, then $T'$ is compact, and since it has no boundary, $T=T'=S$. Since $f$ has no periodic points in $\Lambda = \CR(f|_T) = \CR(f)$, it follows that there are no periodic points at all. The only compact boundaryless `surface admitting homeomorphisms without periodic points is $\T^2$, so $S=\T^2$ as required.

Now suppose $T$ has at least one end in each connected component, and let $\hat{T}$ be its ends compactification. 
Replacing $\hat{f}$ by some power of $\hat{f}$, we may assume that $\hat{f}$ preserves orientation, the periodic points arising from the ends of $\hat{T}$ are fixed points, and there are no other periodic points. Moreover, each connected component of $\hat{T}$ is invariant, all fixed points in $\hat{T}$ are attracting or repelling and there is at least one in each connected component. Thus we may (and will) assume from now on that $\hat{T}$ is connected and we will show that it is a sphere with exactly two fixed points, so that the corresponding connected component of $T$ is an annulus as desired.

Since the fixed points of $\hat{f}$ are attracting or repelling, the index of each fixed point is $1$. Since there are no other periodic points, the same is true for $\hat{f}^n$, for any $n\neq 0$. Thus we get, from the Lefschetz-Hopf theorem,
$$L(\hat{f}^n) = \#\Fix(\hat{f}^n) = \#\Fix(\hat{f})$$
where $L(f)$ denotes the Lefschetz number of $f$ (see \cite{franks-misiurewicz-conley}), defined by
$$L(\hat{f}^n)=\operatorname{tr}(\hat{f}_{*0})-\operatorname{tr}(\hat{f}_{*1})+\operatorname{tr}(\hat{f}_{*2}),$$ where  $\hat{f}_{*i}$ is the isomorphism induced by $\hat{f}$ in the $i$-th homology $H_i(\hat{T}, \mathbb{Q})$. 

It is clear that $\operatorname{tr}(\hat{f}_{*0})=1$ because we are assuming that $\hat{T}$ is connected. Since $\hat{T}$ is orientable and we are assuming that $\hat{f}$ preserves orientation, and from the fact that $\hat{T}$ is a closed surface, we also have that $\operatorname{tr}(\hat{f}_{*2})=1$.
Thus 
$$1\leq \#\Fix(\hat{f}) = L(\hat{f}^n) = 2 - \operatorname{tr}(A^n),$$
where $A$ is a matrix that represents $\hat{f}_{*1}$. Since $\hat{f}$ is a homeomorphism, $A\in \SL(\beta_1,\Z)$, where $\beta_1$ is the first Betti number of $\hat{T}$. By Lemma \ref{lem:trace} we can find $n$ such that $\operatorname{tr}(A^n)\geq \beta_1$. It follows that $\beta_1\leq  1$. But since $\hat{T}$ is a closed orientable surface, $\beta_1$ is even, so that $\beta_1=0$. That is, the first homology of $\hat{T}$ is trivial. We conclude that $\hat{T}$ is the sphere.  Since $\hat{f}$ preserves orientation, this means that $L(\hat{f})=2$, so that there are exactly two fixed points as we wanted to show. 
\end{proof}

\subsection{Proof of Theorem \ref{th:A}}

\begin{proof}[Proof of Theorem \ref{th:A}] Suppose there exists $\epsilon>0$ and some $\epsilon$-transitive component which has no periodic points. By Theorem \ref{th:annuli}, we have two possibilities: First, $S=\T^2$ and there are no periodic points in $S$. But this is not possible due to Corollary \ref{coro:torus}. The second and only possibility is that $\Lambda=\CR(f)\cap (A_1\cup\cdots\cup A_m)$ where the union is disjoint, each $A_i$ is a periodic open annulus, and each end of $A_i$ is either attracting or repelling. Using $f^n$ instead of $f$, we may assume that each $A_i$ is invariant. Let $\Lambda_i = \Lambda\cap A_i$. Then $A_i$ is an open invariant annulus such that $\CR(f|_{A_i}) = \Lambda_i$ and $f$ has no periodic points in $A_i$. But it is easy to see that $f$ has the shadowing property in $\CR(f|_{A_i})$, and this contradicts Corollary \ref{coro:annulus}.
\end{proof}

\subsection{Kupka-Smale diffeomorphisms}

The proof of theorem \ref{th:MS} starts with the next theorem that holds in any dimension for any Kupka-Samle diffeomorphism having the shadowing property. 

\begin{theorem}\label{th:KS} Let $f\colon M\to M$ be a Kupka-Smale diffeomorphism of a compact manifold having the shadowing property. Suppose there is a chain transitive component $\Lambda$ which contains a periodic orbit $p$. Then $\Lambda$ is the homoclinic class of $p$.
\end{theorem}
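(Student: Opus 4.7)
The plan is to establish $\Lambda = H(p)$ by proving the two inclusions separately. Since $f$ is Kupka-Smale, the orbit $\mathcal{O}(p)$ is hyperbolic and every intersection of a stable and an unstable manifold of periodic orbits is automatically transverse; in particular $H(p)$ coincides with the closure of $W^s(\mathcal{O}(p))\cap W^u(\mathcal{O}(p))\setminus\mathcal{O}(p)$, so the transversality clause in the definition of $H(p)$ comes for free throughout the argument.

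The inclusion $H(p)\subseteq\Lambda$ does not require the shadowing hypothesis. If $q\in W^s(\mathcal{O}(p))\cap W^u(\mathcal{O}(p))$, then $f^n(q)\to\mathcal{O}(p)$ as $n\to\pm\infty$, so one can form arbitrarily fine $\varepsilon$-pseudo-orbits from $p$ to $q$ and from $q$ to $p$ by concatenating long forward and backward orbit segments of $q$ with a single small jump near $\mathcal{O}(p)$. Hence $q$ is chain-equivalent to $p$, placing $q\in\Lambda$, and taking closures gives $H(p)\subseteq\Lambda$.

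The real content is the reverse inclusion $\Lambda\subseteq H(p)$, where shadowing is used. Fix $x\in\Lambda$ and choose $\eta>0$ smaller than the size of the local product structure along $\mathcal{O}(p)$, so that any orbit staying within $\eta$ of $\mathcal{O}(p)$ for all positive (resp.\ negative) time must lie on $W^s(\mathcal{O}(p))$ (resp.\ $W^u(\mathcal{O}(p))$). Given $\varepsilon\in(0,\eta)$, let $\delta>0$ be supplied by the shadowing property. Since $x$ and $p$ lie in the same chain-transitive component, pick finite $\delta$-pseudo-orbits $p=a_0,\dots,a_{N_1}=x$ and $x=b_0,\dots,b_{N_2}=p$, and splice them between two half-orbits of $p$ to obtain a bi-infinite $\delta$-pseudo-orbit that coincides with the orbit of $p$ for $n\leq 0$ and for $n\geq N_1+N_2$ and passes through $x$ at step $N_1$. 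Shadowing then furnishes a point $y\in M$ whose orbit $\varepsilon$-shadows this pseudo-orbit. By the choice of $\eta$, the backward shadowing forces $y\in W^u(\mathcal{O}(p))$ while the forward shadowing forces $f^{N_1+N_2}(y)\in W^s(\mathcal{O}(p))$; by invariance of both manifolds the point $q:=f^{N_1}(y)$ belongs to $W^s(\mathcal{O}(p))\cap W^u(\mathcal{O}(p))$, and Kupka-Smale upgrades this intersection to a transverse one, so $q\in H(p)$. Since $d(q,x)<\varepsilon$ and $H(p)$ is closed, letting $\varepsilon\to 0$ yields $x\in H(p)$.

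The hard part will be the standard hyperbolic-theory step of passing from ``the orbit stays $\eta$-close to $\mathcal{O}(p)$ in one time direction'' to ``the point actually lies on the corresponding invariant manifold''. This is the only place where hyperbolicity of $\mathcal{O}(p)$ (provided by Kupka-Smale) is genuinely used, and it requires an a priori upper bound on $\varepsilon$ depending only on the sizes of the local stable and unstable manifolds along the finite orbit of $p$; the shadowing constant $\delta$ must then be chosen after $\varepsilon$ has been fixed within this admissible range.
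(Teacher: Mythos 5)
Your proposal is correct and follows essentially the same route as the paper: concatenate the two finite $\delta$-pseudo-orbits joining $p$ to $x$ and back, extend by the genuine backward and forward orbits of $p$, shadow, and use hyperbolicity of the periodic orbit (from Kupka--Smale) to place the shadowing orbit in $W^u(\mathcal{O}(p))\cap W^s(\mathcal{O}(p))$, with transversality for free. The only difference is that you spell out the quantifier order ($\eta$ before $\varepsilon$ before $\delta$) and the easy inclusion $H(p)\subseteq\Lambda$ more explicitly than the paper does.
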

\begin{proof} Suppose that $f^k(p)=p$ and there is some point $x\in \Lambda$ which is not in the orbit of $p$. Let $\epsilon>0$, and choose $\delta>0$ such that every $\delta$-pseudo orbit is $\epsilon$-shadowed by an orbit. Since $x$ and $p$ are in the same chain transitive component, we can find a $\delta$-pseudo orbit $x_{-a},\dots, x_b$ such that $x_0=x$, $x_{-a}=p$ and $x_b = p$. Define $x_{-n} = f^{-n+a}(p)$ for $n>a$ and $x_n = f^{n-b}(p)$ for $n>b$. Then  $\{x_n\}$ is a $\delta$-pseudo orbit, which is $\epsilon$-shadowed by the orbit of some $y\in M$, which is not in the orbit of $p$ if $\epsilon$ is small enough. 

Note that $d(f^{-kn}(y), f^a(p))<\epsilon$ if $n>a$ and $d(f^{kn}(y), f^{-b}(p))<\epsilon$ if $n>b$. If $\epsilon$ is small enough, this implies that $y\in W^u(f^{a}(p))\cap W^s(f^{-b}(p))$. Note that the latter intersection is transverse since $f$ is Kupka-Smale. Since $y$ is $\epsilon$-close to $x$ and $\epsilon$ was arbitrary, it follows that $x$ is in the homoclinic class of $p$. 

It is clear that any point in the homoclinic class of $p$ is in the same chain transitive component of $p$. This completes the proof.
\end{proof}

\begin{corollary} If $f\colon M\to M$ is Kupka-Smale, then chain transitive components contain at most one or infinitely many periodic orbits.
\end{corollary}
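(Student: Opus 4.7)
The plan is to deduce the corollary from Theorem \ref{th:KS} together with the Smale--Birkhoff homoclinic theorem. Suppose $\Lambda$ is a chain transitive component containing at least one periodic orbit, say the orbit of $p$. By Theorem \ref{th:KS}, $\Lambda$ coincides with the homoclinic class $H(p)$, that is, the closure of the transverse homoclinic intersections of the orbit of $p$. (Kupka--Smale guarantees that all intersections of $W^u(\mathcal{O}_p)$ with $W^s(\mathcal{O}_p)$ are transverse.) The goal is to show that if $\Lambda$ contains any periodic orbit other than $\mathcal{O}_p$, then it contains infinitely many.

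Assume there is a periodic point $q\in \Lambda$ whose orbit is disjoint from $\mathcal{O}_p$. Since $\Lambda=H(p)$, the point $q$ lies in the closure of $W^u(\mathcal{O}_p)\pitchfork W^s(\mathcal{O}_p)$, so every neighborhood of $q$ contains a transverse homoclinic point of $\mathcal{O}_p$. In particular, at least one such transverse homoclinic point $x$ exists. Applying the Smale--Birkhoff theorem to $x$, there is an integer $N>0$ and a compact $f^N$-invariant hyperbolic set $K$ in an arbitrarily small neighborhood of the orbit $\mathcal{O}_p\cup\{f^n(x)\}_{n\in\Z}$ on which $f^N$ is topologically conjugate to a full shift on finitely many symbols. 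This set $K$ carries infinitely many periodic orbits of $f$.

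It remains to check that these new periodic orbits lie inside $\Lambda$. This is automatic: every periodic orbit produced by the horseshoe $K$ is contained in the closure of $W^u(\mathcal{O}_p)\cap W^s(\mathcal{O}_p)$ (in fact, it accumulates on transverse homoclinic points of $\mathcal{O}_p$), so it is contained in $H(p)=\Lambda$. Consequently $\Lambda$ contains infinitely many periodic orbits, which proves the dichotomy.

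The main obstacle I anticipate is the bookkeeping in the second paragraph: one has to be sure that the transverse homoclinic point provided by $q\in H(p)\setminus\mathcal{O}_p$ can really be used as input for Smale--Birkhoff (it can, since transversality is exactly the hypothesis of that theorem), and that the resulting horseshoe periodic orbits genuinely sit in $H(p)$ rather than merely near it. Both points are standard, but they deserve a brief explicit remark; the rest of the argument is then a direct combination of Theorem \ref{th:KS} with a classical fact about homoclinic classes.
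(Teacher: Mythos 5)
Your argument is correct and is exactly the intended deduction: the paper states this corollary without proof, but the implicit argument is precisely Theorem \ref{th:KS} (whose proof already produces transverse homoclinic points of $\mathcal{O}_p$ accumulating on any point of $\Lambda\setminus\mathcal{O}_p$) combined with the Birkhoff--Smale horseshoe, whose periodic orbits lie in $H(p)=\Lambda$. Note only that, like the paper's statement, your proof tacitly carries the shadowing hypothesis of Theorem \ref{th:KS}, which is needed for $\Lambda=H(p)$.
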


\begin{corollary} If $f\colon M\to M$ is Kupka-Smale, then either $f$ has positive entropy or every chain transitive component consists of a single periodic orbit.
\end{corollary}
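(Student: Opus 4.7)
The strategy is to argue the contrapositive: assume $f$ has zero topological entropy, and show that every chain transitive component consists of a single periodic orbit. The key tool is the Smale--Birkhoff homoclinic theorem, according to which a transverse homoclinic point of a hyperbolic periodic orbit yields an invariant horseshoe and hence positive entropy; under the zero-entropy hypothesis no such transverse homoclinic intersection can exist.

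Let $\Lambda$ be a chain transitive component, and first consider the case in which $\Lambda$ contains a periodic point $p$. By Theorem \ref{th:KS} we have $\Lambda = H(p)$, the homoclinic class of $p$. If $\Lambda$ were strictly larger than the orbit of $p$, I would pick $x \in \Lambda$ off the orbit of $p$ and rerun the construction from the proof of Theorem \ref{th:KS}: for small $\epsilon>0$, form a $\delta$-pseudo orbit going from $p$ through $x$ back to $p$ and then extended by the $f$-orbit of $p$ in both time directions, and $\epsilon$-shadow it by a real orbit. The shadowing point $y$ lies within $\epsilon$ of $x$, so for $\epsilon$ small it is not on the orbit of $p$; but by construction $y \in W^u(\mathcal{O}(p)) \cap W^s(\mathcal{O}(p))$, and the Kupka--Smale hypothesis makes this intersection transverse. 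Hence $y$ is a transverse homoclinic point, contradicting zero entropy via Smale--Birkhoff, and we conclude $\Lambda = \mathcal{O}(p)$.

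The main obstacle I foresee is the aperiodic case: one must rule out chain transitive components with no periodic orbit at all. Theorem \ref{th:B} shows that such components are permitted under the Kupka--Smale and shadowing hypotheses, though in that construction they coexist with crooked horseshoes and thus with positive entropy, consistent with the desired dichotomy. To handle this case cleanly I would try to use the shadowing property (implicit via Theorem \ref{th:KS}) to convert the nontrivial recurrence inside an aperiodic $\Lambda$ into a transverse homoclinic configuration at some nearby hyperbolic periodic point, and then again invoke Smale--Birkhoff. Promoting recurrence inside an aperiodic chain transitive component into an honest horseshoe is where I expect the real technical difficulty to lie.
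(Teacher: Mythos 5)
The paper states this corollary with no proof at all, as an immediate consequence of Theorem \ref{th:KS}, so there is no written argument to compare against; note also that (as in Theorem \ref{th:KS}) the shadowing property must be tacitly assumed, and you correctly route your argument through that theorem. Your treatment of a chain transitive component $\Lambda$ containing a periodic point $p$ is certainly the intended argument and is correct: if $p$ is a sink or a source the homoclinic class is trivially the orbit of $p$, and otherwise the shadowing construction from the proof of Theorem \ref{th:KS} produces, for any $x\in\Lambda$ off the orbit of $p$, a point $y$ not on that orbit lying in a transverse intersection of $W^u(f^a(p))$ and $W^s(f^{-b}(p))$; Smale--Birkhoff then gives a horseshoe and positive entropy.

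The genuine gap is the one you flag yourself: the aperiodic case. Read literally, the corollary asserts that a zero-entropy Kupka--Smale diffeomorphism with shadowing has \emph{no} aperiodic chain transitive component, and this does require an argument: Theorem \ref{th:B} shows aperiodic classes are compatible with Kupka--Smale plus shadowing (there they come with crooked horseshoes and hence positive entropy, but that is a feature of that construction, not an automatic consequence). Your proposed repair --- promoting recurrence in an aperiodic class to a transverse homoclinic point of a nearby periodic orbit --- meets a circularity: shadowing forces you to fix $\epsilon$, hence $\delta$, before you know which periodic orbit (say one supplied by Theorem \ref{th:A} in the $\delta$-transitive component of $\Lambda$) you will use, and nothing then prevents the shadowing orbit from being that periodic orbit itself. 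The alternative direct entropy argument --- shadowing all concatenations of two equal-length $\delta$-chains through $\Lambda$ that are distinguishable at a common time --- requires synchronizing return times inside $\Lambda$, which fails for odometer-like classes. Since the paper offers no proof either, the missing step is not to be found there; as written, your proposal (like the paper's bare statement) justifies the corollary only for chain transitive components that contain a periodic orbit.
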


Now we can prove Theorem \ref{th:MS}.

\begin{proof}[Proof of Theorem \ref{th:MS}] Since $f$ has finitely many periodic orbits, by Theorem $\ref{th:KS}$ each chain transitive component of $f$ contains at most one periodic orbit, and if it does it contains nothing else. We need to show that there are no chain transitive components without periodic orbits, as this would imply that $\CR(f)=\Per(f)$, and the Kupka-Smale condition then implies then that $f$ is Morse-Smale.

Suppose by contradiction that there is some chain transitive component $\Lambda$ without periodic points. Since there are finitely many periodic orbits, for $\epsilon>0$ small enough it holds that the $\epsilon$-transitive components of periodic points are disjoint from $\Lambda$. Thus the $\epsilon$-transitive component $\Lambda_0$ containing $\Lambda$ contains no periodic points. By Theorem \ref{th:A}, this is a contradiction.
\end{proof}

% !TEX root = shadow-main.tex
\section{An example with an aperiodic class: proof of Theorem \ref{th:B}}
Let us briefly explain the idea of the construction of the example from Theorem \ref{th:B}. We will define a map $f$ on an arbitrary surface $S$, which has an invariant annulus homeomorphic to $C_1=\mathbb{S}^1\times [-1,1]$.
Our map will be such that the circle $C=\mathbb{S}^1\times\{0\}$ is invariant and $f|_C$ is an irrational rotation. This circle is going to be an aperiodic class. To guarantee that the system has the shadowing property we combine two ideas: first, we will make sure that $f$ is hyperbolic outside any neighborhood of $C$, and that $C$ has arbitrarily small ``traps'' (i.e. neighborhoods from which pseudo-orbits cannot escape). The traps guarantee that a pseudo-orbit that has a point far from $C$ is disjoint from a neighborhood of $C$, and a pseudo-orbit with a point close enough to $C$ must be entirely contained in a small neighborhood of $C$. For the first type of pseudo-orbits, the hyperbolicity of $f$ guarantees a shadowing orbit. In order to guarantee the shadowing of pseudo-orbits of the second type, \ie those remaining in a small neighborhood of $C$, we require that there is a sequence of hyperbolic sets of a special kind (``crooked horseshoes'') accumulating on the circle $C$. These sets have the property that they contain orbits whose first coordinates approximate increasingly well any $\epsilon$-pseudo-orbit of the rotation on $C$.

\subsection{Outline of the construction}\label{sec:exo}

In this section we will give a series of conditions that our map $f\colon S\to S$ must satisfy, and we will show how these conditions guarantee that $f$ has the shadowing property while exhibiting an aperiodic class. In later sections we proceed to construct $f$ satisfying the required conditions.

If $K\subset U\subset S$, let us say that $U$ is a \emph{forward (backward) trap} for $K$ if there exists $\delta>0$ such that any $\delta$-pseudo-orbit with initial point in $K$ remains entirely in $U$ in the future (resp. in the past); \ie if $\{z_n\}_{n\in \Z}$ is a $\delta$-pseudo-orbit and $z_0\in K$ then $z_n\in U$ for every positive (resp. negative) integer $n$. If $U$ is both a forward and backward trap, we say that $U$ is a \emph{full trap} for $K$.

Let us note that a if $U$ is a full trap for $K$, then there exists $\delta>0$ such that any $\delta$-pseudo-orbit intersecting $K$ lies entirely in $U$, and similarly any $\delta$-pseudo-orbit intersecting $S\sm U$ lies entirely in $S\sm K$ (in particular, $S\sm K$ is a full trap for $S\sm U$ as well).

If $W$ is an open relatively compact set containing $K$ and such that $f(\ol{W})\subset W$, then $W$ is a forward trap for $K$. Indeed, it suffices to choose $\delta< d(f(\ol{W}),S\sm W)$. Similarly, if $f^{-1}(\ol{W})\subset W$, then $W$ is a backward trap for $K$. 

Thus, in order to guarantee that a set $U$ is a full trap for $K\subset U$ it suffices to verify that there are relatively compact open sets $W_+, W_-$ such that $K\subset W_\pm\subset U$, $f(\ol{W}_+)\subset W_+$, and $f^{-1}(\ol{W}_-)\subset W_-$.

Suppose that there is a decreasing sequence of closed annuli $\{\aC_n\}_{n\in \N}$ in $S$, represented in appropriately chosen coordinates as $$\aC_n=\mathbb{S}^1\times [-c_n,c_n] \subset \A:=\mathbb{S}^1\times \R,$$ where $\{c_n\}_{n\in \N}$ is a decreasing sequence of positive numbers with $c_n\to 0$ as $n\to \infty$ (and $c_1=1$). 

Assume that:
\begin{enumerate}
\item[(1)] $f$ is an orientation-preserving diffeomorphism, and $f(\aC_n)=\aC_n$ for all $n\in \N$;
\item[(2)] the two circles bounding $\aC_n$ are normally hyperbolic, repelling if $n$ is odd and attracting if $n$ is even;
\item[(3)] if $C=\mathbb{S}^1\times \{0\}$, then $f|_C$ is an irrational rotation;
\item[(4)] for each $n\in \N$, $f|_{\aC_1\sm \inter \aC_n}$ is Axiom A with strong transversality.
\item[(5)] $f|_{S\sm \inter{\aC_1}}$ is Morse-Smale.
\end{enumerate}

Note that (3) implies that if $n$ is even, one may find a neighborhood $W$ of $\aC_n$ arbitrarily close to $\aC_n$ (i.e. in the $\epsilon$-neighborhood of $\aC_n$ for any fixed $\epsilon>0$) such that $f(\ol{W})\subset W$, and similarly if $n$ is odd one may find a similar neighborhood such that $f^{-1}(\ol{W})\subset W$.

Due to our previous observations, this implies that $\aC_{k}$ is a full trap for $\aC_{k+2}$ for any $k\in \N$. In other words, there is $\delta>0$ (depending on $k$) such that any $\delta$-pseudo-orbit intersecting $\aC_{k+2}$ is disjoint from $\aC_{k}$, and likewise, any $\delta$-pseudo-orbit intersecting $\aC_{k}$ never visits $S\sm \aC_{k+2}$. 

If $k\in \N$ is fixed, by (4) we know that $f|_{\aC_1\sm \inter \aC_{k+2}}$ is Axiom A with strong transversality, and by (5) so is $f|_{S\sm \inter \aC_1}$. These two facts, together with the fact that $\bd \aC_1$ is normally hyperbolic, imply that $f|_{S\sm \inter \aC_{k+2}}$ is Axiom A with strong transversality, and as mentioned in the introduction this guarantees that $f|_{S\sm \inter \aC_{k+2}}$ has the shadowing property (see \cite{pilyugin}). Hence, given $\epsilon>0$, there exists $\delta>0$ such that any $\delta$-pseudo-orbit of $f$ lying entirely in $S\sm \inter \aC_{k+2}$ is $\delta$-shadowed by an orbit of $f$. 

But since $\aC_{k}$ is a full trap for $\aC_{k+2}$, after reducing $\delta$ if necessary we know that any $\delta$-pseudo-orbit intersecting $S\sm \aC_k$ is entirely contained in $S\sm \aC_{k+2}$, and so it is $\epsilon$-shadowed by an orbit of $f$. 
In other words, we have seen that for any given $\epsilon>0$ and $k\in \N$, there is $\delta>0$ such that a $\delta$-pseudo-orbit which is \emph{not} $\epsilon$-shadowed by an orbit of $f$ must lie entirely in $\aC_k$.

Therefore, in order for $f$ to have the shadowing property, we need a condition that guarantees that if $k\in \N$ is large enough and $\delta>0$ is small enough, then any $\delta$-pseudo-orbit lying entirely in $\aC_k$ is $\epsilon$-shadowed by an orbit.
The assumption that we add for that purpose (using the coordinates of $\A$) is the following. Let $p\colon \aC_1\to C$ denote the map $(x,y)\mapsto (x,0)$. 
\begin{enumerate}
\item[(6)] For each $\epsilon'>0$ and $k\in \N$, there is $\delta'>0$ such that any $\delta'$-pseudo-orbit of $f|_C$ is $\epsilon'$-shadowed by the first coordinate of some orbit of $f$ in $\aC_k$; \ie if $\{x_n\}_{n\in \N}$ is a $\delta'$-pseudo-orbit of $f|_C\colon C\to C$, then there is $z\in \aC_k$ such that $d(p(f^n(z)), x_n)<\epsilon'$ for all $n\in \Z$.

\end{enumerate}
Let us explain why this condition suffices.
Fix $\epsilon>0$, choose $\delta'$ as in (6) using $\epsilon'=\epsilon/3$, and let $\delta=\delta'/3$. 
Choosing $k$ large enough, we may assume that $c_k<\epsilon/3$ and, moreover, $d(f(z), f(p(z)) < \delta$ whenever $z\in \aC_k$. Suppose that $\{z_n\}_{n\in \Z}$ is a $\delta$-pseudo-orbit for $f$ contained entirely in $\aC_k$. Then $\{p(z_n)\}_{n\in \Z}$ is a $\delta'$-pseudo-orbit for $f|_C$. By (6), there is $z\in \aC_k$ such that $d(p(f^n(z)), p(z_n))<\epsilon/3$ for all $n\in \Z$. This implies that $d(f^n(z), z_n)<\epsilon/3+2c_k <\epsilon$ for all $n\in \Z$, so that $\{z_n\}$ is $\epsilon$-shadowed by the orbit of $z$, as we wanted.

Therefore, a diffeomorphism satisfying (1)-(6) has the shadowing property, and one easily verifies that $C$ is an aperiodic chain transitive class.

To obtain a diffeomorphism satisfying (1)-(6), the main problem is finding a diffeomorphism from a neighborhood of $\aC_1$ in $\A$ to a neighborhood of $\aC_1$ in $\A$ satisfying (1)-(4) and (6). Indeed, once we have found such $f$, using standard arguments (as in \S\ref{sec:construct}) we may modify an arbitrary Morse-Smale diffeomorphism of $S$ in a neighborhood $D$ of a hyperbolic repelling fixed point such that $\ol{D}\subset f(D)$, replacing the dynamics in $D$ by an attracting fixed point and an invariant annulus with the dynamics of $f|_{\aC_1}$ (see Figure \ref{fig:embed-MS}). 

\begin{figure}
\begin{center}
\includegraphics[height=6cm]{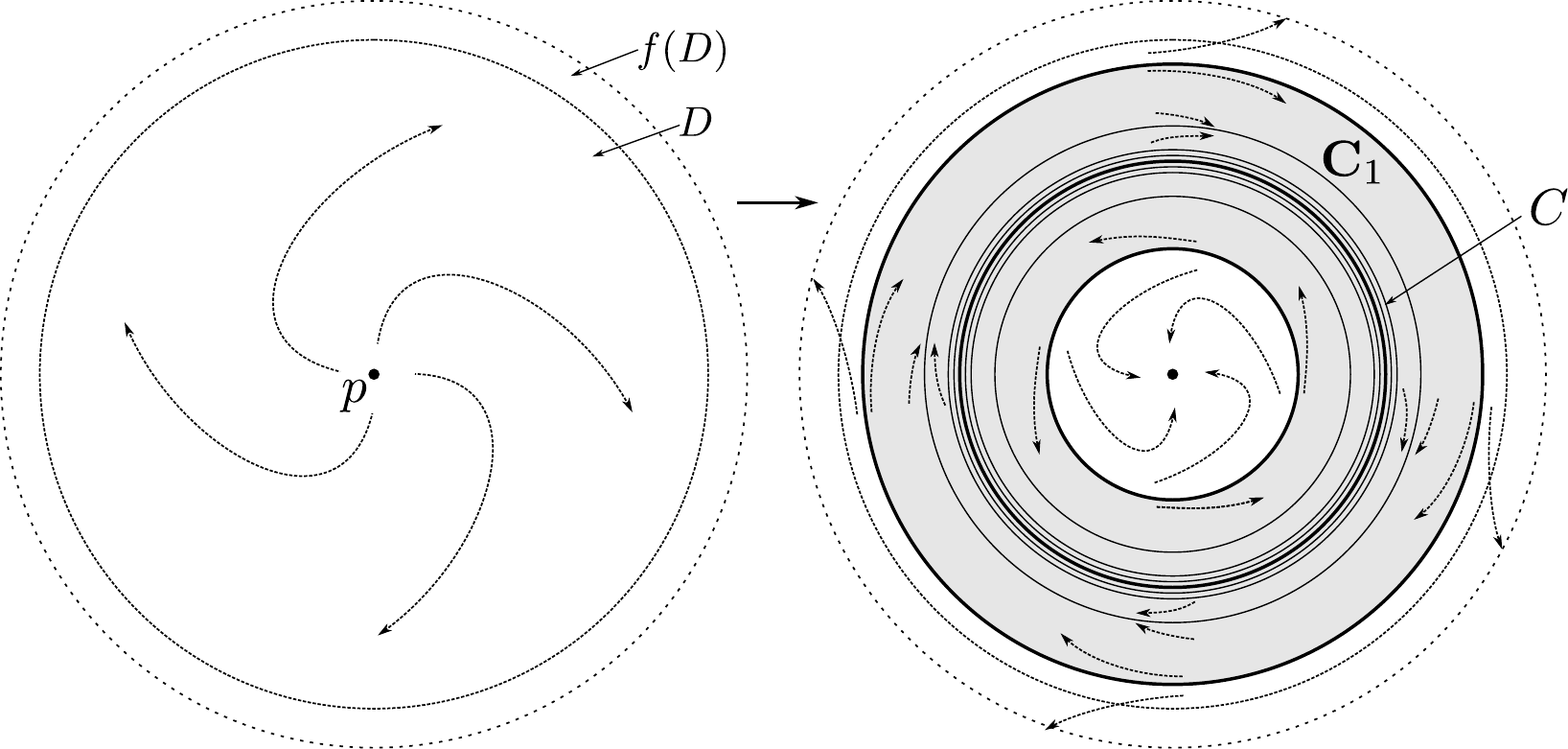}
\caption{The Morse-Smale diffeomorphism is modified only in $D$}
\label{fig:embed-MS}
\end{center}
\end{figure}

Thus, from now on, we are devoted to constructing the required map in $\aC_1$. The main difficulty is guaranteeing condition (6). For that purpose, we will use a sequence of hyperbolic sets accumulating on $C$ which have increasingly fine Markov partitions allowing to codify the required sequence of ``jumps'' for the first coordinates. In the next section we describe a model hyperbolic set which will be used for that purpose.

\subsection{Crooked horseshoes}

We begin by defining a diffeomorphism $H\colon \ol{\A} \to \ol{\A}$ of the closed annulus $\ol{\A}=\mathbb{S}^1\times [0,1]$ which has a ``crooked horseshoe'' wrapping around the annulus. Such a map is obtained by mapping a closed annulus to its interior as in figure \ref{fig:horseshoe}. The regions $A$ and $B$ are rectangles in the coordinates of $\ol{\A}$. The region $A$ is mapped to the interior of $A$, and $B$ is mapped to the gray region, which intersects $B$ in five rectangles. The map $H$ is contracting in $A$, while in $B\cap H^{-1}(B)$ it contracts in the radial direction and expands in the ``horizontal'' direction in a neighborhood of $B$ (affinely). 

This defines a diffeomorphism $H$ from $\ol{\A}$ to the interior of $\ol{\A}$. It is easy to see that the nonwandering set of $H$ consists of two parts: the set $K_0$, which is the maximal invariant subset of $H$ in $B$ and an attracting fixed point $p$ in $A$.

\begin{figure}
\begin{center}
\includegraphics[height=6cm]{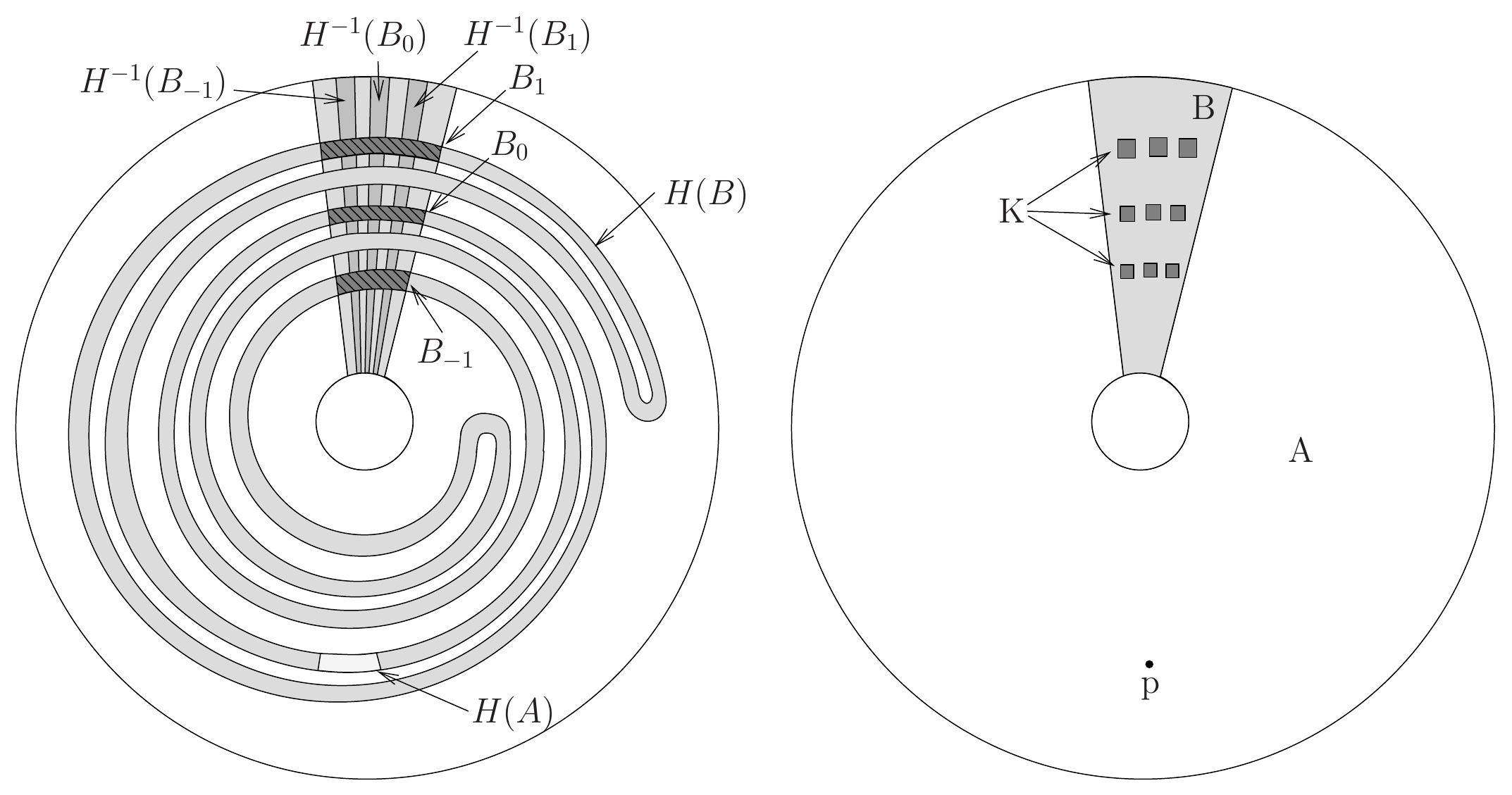}
\caption{A crooked horseshoe}
\label{fig:horseshoe}
\end{center}
\end{figure}

Since $H$ is affine in a neighborhood of $K_0$, the set $K_0$ is hyperbolic.
We can regard $H$ as a diffeomorphism from $\ol{\A}$ to itself by doing the above construction inside a smaller annulus, and then extending $H$ to the boundary in a way that the two boundary components are repelling and the restriction of $H$ to the boundary is Morse-Smale. In this way we obtain an Axiom A diffeomorphism $H\colon \ol{\A}\to \ol{\A}$.

As in the classical horseshoe, we have a natural Markov partition consisting of the five rectangles of intersection of $B$ with $H(B)$, which induces a conjugation of $H|_{K_0}$ to a full shift on five symbols. However we will restrict our attention to the set $K\subset K_0$ which is the maximal invariant subset of $H$ in $B_{-1}\cup B_0 \cup B_1$ (\ie three particular rectangles of the Markov partition, which are chosen as in Figure \ref{fig:horseshoe}). For these, we have a conjugation of $H|_{K}$ to the full shift on three symbols $\sigma\colon \{-1,0,1\}^\Z \to \{-1,0,1\}^\Z$, where the conjugation $\phi\colon K\to \{-1,0,1\}^\Z$ is such that $\phi(z) = (i_n)_{n\in \Z} \iff H^n(z)\in K_{i_n}$ for all $n\in \Z$, where $K_i=K\cap B_i$. Note that if $z\in K_i$ then $f(z)$ turns once around the annulus clockwise if $i=1$ and counter-clockwise if $i=-1$, and $f(z)$ does not turn if $i=0$. This is clearly seen considering the lift $\hat{B}$ of $B$ to the universal covering of $\ol{A}$ (\ie a connected component of the pre-image of $B$ by the covering projection), and a lift $\hat{H}\colon \R\times [0,1]\to \R\times [0,1]$ of $H$ such that $\hat{H}$ has a fixed point in $\hat{B}$ (see figure \ref{fig:lift-horse}). If $\hat{B}_i$ are the lifts of the sets $B_i$ inside $\hat{B}$ and $\hat{K}$ is the part of $\hat{B}$ that projects to $K$, we have that $\hat{H}(\hat{z})\in \hat{B} + (i,0)$ if $\hat{z}\in \hat{B}_i\cap \hat{K}$.

\begin{figure}
\begin{center}
\includegraphics[height=3.6cm]{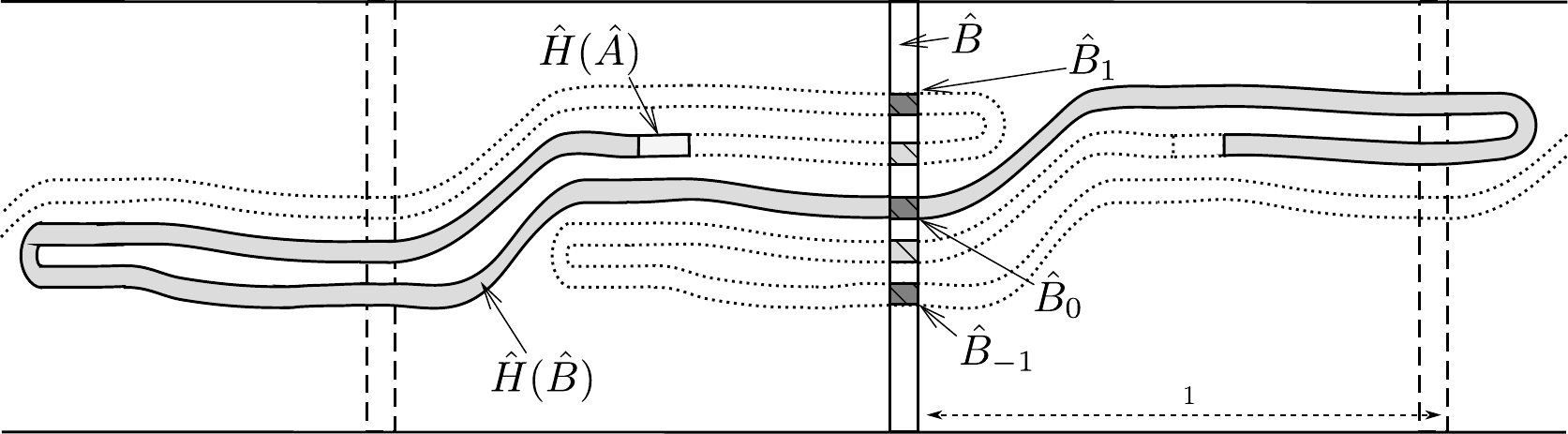}
\caption{The lift of $H$}
\label{fig:lift-horse}
\end{center}
\end{figure}

\begin{definition}\label{def:crooked}
We say that a diffeomorphism $f\colon \ol{\A}\to \ol{\A}$ has a crooked horseshoe if there is a hyperbolic invariant set $K\subset \ol{\A}$ with the properties described above; that is, there is a lift $\hat{f}$ of $f$ to the universal covering and three sets $\hat{K}_{-1}, \hat{K}_0$ and $\hat{K}_1$, which project to a Markov partition $K=K_{-1}\cup K_0\cup K_1$ such that $\hat{f}(\hat{z})\in i+\hat{K}$ if $\hat{z}\in \hat{K}_i$ (where $\hat{K} = \hat{K}_{-1}\cup \hat{K}_0\cup \hat{K}_1)$. The symbolic dynamics associated to the given partition of $K$ corresponds to a full shift on three symbols. 
Furthermore, the width of $\hat{K}$ (that is, the diameter of its projection to the first coordinate) is assumed to be at most $1$.
\end{definition}

\subsection{Approximations of a Liouville rotation with crooked horseshoes}

One of the key steps for our construction requires a hyperbolic diffeomorphism $f$ of the annulus which is $C^\infty$-close to the identity such that some power of $f$ has a crooked horseshoe. This is guaranteed by the next proposition, the proof of which is given in Appendix \ref{sec:crooked}. For $r\in \N$ and two diffeomorphisms $f$ and $g$, denote by $d_{C^r}(f,g)$ the $C^r$-distance between $f$ and $g$. 

\begin{proposition} \label{pro:flow} There exists a sequence of constants $\{C_r\}_{r\in \N}$ such that for any $m\in \N$ there is a $C^\infty$-diffeomorphism $H\colon \ol{\A}\to \ol{\A}$ isotopic to the identity, satisfying the following properties:
\begin{enumerate} 
\item $H$ is Axiom A with the strong transversality condition;
\item $H^m$ has a crooked horseshoe;
\item $d_{C^r}(H,\id) < C_r/m$ for each $r\in \N$.
\end{enumerate}
Furthermore, the lift of $f = H^m$ in Definition \ref{def:crooked} may be chosen as $\hat{H}^m$, where $\hat{H}$ is a lift of $H$.
\end{proposition}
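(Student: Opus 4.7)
The plan is to construct $H$ directly as a $C^\infty$-diffeomorphism at $C^r$-distance $O(1/m)$ from the identity whose $m$-th iterate contains a crooked horseshoe. The central observation is that iterating a map at $C^1$-distance $1/m$ from $\id$ for $m$ steps can generate $C^0$-displacements of order $1$ (enough to wrap once around the annulus) and derivatives of magnitude $e^{O(1)}$ (enough for hyperbolic stretching), so the near-identity constraint on $H$ is entirely compatible with a full horseshoe appearing in $H^m$.

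Concretely, I would take a lift $\hat H\colon \R\times[0,1]\to\R\times[0,1]$ commuting with integer translations, of the form
\[
\hat H(x,y)=(x,y)+\tfrac{1}{m}V(x,y)+O(1/m^2),
\]
where $V$ is a smooth, $\Z$-periodic ``one-step velocity'' modeled on a periodically forced planar saddle system known to produce a horseshoe in its time-$1$ map (in the spirit of Smale's original construction). The field $V$ is chosen so that three disjoint rectangles $\hat K_{-1},\hat K_0,\hat K_1\subset \R\times[0,1]$ of diameter $O(1)$ in the $x$-direction are mapped by $\hat H^m$ into $-1+\hat K$, $0+\hat K$ and $1+\hat K$ respectively (where $\hat K = \hat K_{-1}\cup\hat K_0\cup\hat K_1$), with the stretching-and-folding geometry of Definition \ref{def:crooked}. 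The maximal invariant set of $\hat H^m$ in $\hat K$ is then hyperbolic by construction and projects to a crooked horseshoe $K\subset\ol{\A}$. Outside a neighborhood of $K$, I would arrange the dynamics of $H$ to be Morse-Smale: the two boundary circles of $\ol{\A}$ are made normally hyperbolic and any residual non-wandering points consist of hyperbolic periodic orbits with transverse invariant manifolds, so that $H$ is Axiom A with strong transversality globally.

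The main technical difficulty is calibrating $V$ so that the cumulative action of $\hat H$ over $m$ iterations produces exactly the three-symbol wrapping of Definition \ref{def:crooked} with uniform hyperbolicity, while keeping the individual perturbation small in every $C^r$-norm. The $C^r$-bound $d_{C^r}(H,\id)<C_r/m$ reads off directly from the explicit form of $\hat H$ once $V$ is fixed, with constants $C_r$ depending only on the chosen ``shape'' of the horseshoe model. Compatibility of $K$ with the surrounding Morse-Smale structure (matching stable and unstable cones across the boundary of an isolating neighborhood of $K$) is handled by standard Axiom A perturbation techniques. The final assertion that $\hat f=\hat H^m$ is a valid lift of $f=H^m$ follows automatically from the $\Z$-equivariance built into $\hat H$.
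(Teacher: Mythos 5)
Your overall architecture is the right one (and is the same as the paper's in spirit): take $H$ of the form $\id+\frac1m V+O(1/m^2)$, read off $d_{C^r}(H,\id)\le C_r/m$ from the explicit formula, and extract the horseshoe from the $m$-th power. But the two places where you write ``by construction'' and ``standard Axiom A perturbation techniques'' are exactly where the entire difficulty lives, and your proposal does not close either of them. First, the horseshoe. If $H=\id+\frac1m V+O(1/m^2)$ for a fixed autonomous field $V$, then $H^m$ is $C^0$-close (distance $O(1/m)$) to the time-$1$ map of the autonomous flow of $V$, and a time-$t$ map of an autonomous surface flow has no transverse homoclinic points and no horseshoe. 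Your appeal to a ``periodically forced planar saddle system'' does not resolve this: the time-$1$ map of a periodically forced system is a composition of $m$ \emph{different} time-$\frac1m$ maps, not the $m$-th power of a single diffeomorphism $H$, which is what the Proposition requires. So the crooked horseshoe cannot come from the leading term $V$; it must be manufactured entirely out of the $O(1/m^2)$ correction. This is what the paper actually does: it takes $V=X$ to be a field with a \emph{double homoclinic loop} at a dissipative saddle (the two loops providing the $+1$ and $-1$ wraps of the three-symbol crooked structure, the saddle providing the $0$), and then perturbs $\Phi_{1/m}$ by a carefully designed family $f_t$ that breaks the loop and creates transverse homoclinic intersections, while keeping the perturbation $C^r$-small. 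Your proposal never identifies a mechanism for the folding/transversality in $H^m$ compatible with $H$ being a single near-identity map.

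Second, global hyperbolicity. Once one breaks a homoclinic loop one is in a Newhouse--Palis $\Omega$-explosion situation, where for \emph{generic} parameters the new nonwandering set is not hyperbolic (tangencies, non-hyperbolic periodic points). Axiom A with strong transversality holds only for carefully selected parameters: the paper must choose $t=\s^{-n}c_1$ so that the critical points of the perturbation fall into the basins of newly created periodic sinks (Lemmas \ref{pozo1}--\ref{pozo3}), then localize the nonwandering set in thin strips around the images of the fundamental domains (Lemma \ref{nonwander}), build an invariant cone field to get a dominated splitting (Lemma \ref{hyplemma}), and finally invoke Theorem B of \cite{pujals-sambarino} to upgrade domination to hyperbolicity. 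None of this is a ``standard perturbation technique,'' and your proposal gives no substitute for it. As written, the argument asserts the conclusion of the Proposition rather than proving it.
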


Write $\ol{R}_\alpha\colon \ol{\A}\to \ol{\A}$ for the rotation $x\mapsto x+\alpha$, and $R_\alpha\colon \mathbb{S}^1\to \mathbb{S}^1$ for the analogous rotation of the circle. For the next proposition we use the following terminology: if $\{x_n\}_{n\in \N}$ is a sequence of points of $\mathbb{S}^1$ and $h\colon \ol{\A}\to \ol{\A}$ is a homeomorphism, we say that $\{x_n\}_{n\in \N}$ is $\epsilon$-shadowed by the first coordinate of the orbit of $z\in \ol{\A}$ if $d(h^n(z)_1, x_n)<\epsilon$ for all $n\in \Z$, where $(\cdot)_1$ denotes the first coordinate in $\ol{\A}=\mathbb{S}\times [0,1]$.

\begin{proposition} \label{pro:chato} For any $\epsilon>0$ and any Liouville number $\alpha$, there is an Axiom A diffeomorphism $h\colon \ol{\A}\to \ol{\A}$ satisfying the strong transversality condition and there is $\delta>0$ such that every $\delta$-pseudo orbit of $R_\alpha$ is $\epsilon$-shadowed by the first coordinate of some orbit of $h$. The map $h$ may be chosen arbitrarily $C^\infty$-close to $\ol{R}_\alpha$.
\end{proposition}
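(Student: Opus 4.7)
The plan is to construct $h$ by composing $\ol{R}_\alpha$ with a small Axiom-A perturbation provided by Proposition \ref{pro:flow}, exploiting the Liouvillian hypothesis on $\alpha$ so that for an appropriate large iterate $m$ the map $h^m$ essentially reproduces the crooked horseshoe of $H^m$. Fix a small auxiliary parameter $\epsilon_0>0$ to be determined. Using that $\alpha$ is Liouville, pick $m\in\N$ very large with $|m\alpha-p|<\epsilon_0$ for some integer $p$, and apply Proposition \ref{pro:flow} with this $m$ to obtain a $C^\infty$ Axiom-A diffeomorphism $H\colon\ol{\A}\to\ol{\A}$ with strong transversality, $d_{C^r}(H,\id)<C_r/m$ for every $r$, and such that $H^m$ carries a crooked horseshoe $K$ with full 3-shift symbolic dynamics. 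Set $h:=\ol{R}_\alpha\circ H$. Since $H$ is $C^\infty$-close to the identity, $h$ is $C^\infty$-close to $\ol{R}_\alpha$, and since $\ol{R}_\alpha$ is a smooth isometry of $\ol{\A}$, $h$ inherits Axiom A with strong transversality from $H$: the hyperbolic splitting on $\Omega(H)$ is pushed forward by $D\ol{R}_\alpha$ to give the splitting on the corresponding invariant set of $h$, and strong transversality is $C^1$-open.

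The decisive structural claim is that $h^m$ carries a crooked horseshoe $\tilde{K}$ close to the crooked horseshoe $K$ of $H^m$. In the universal cover, collecting the rotations on the left yields
\[
\hat{h}^m \;=\; \ol{R}_{m\alpha}\circ G,\qquad G:=\hat{H}^{(m-1)}\circ\cdots\circ\hat{H}^{(0)},\qquad \hat{H}^{(k)}:=\ol{R}_{-k\alpha}\circ\hat{H}\circ\ol{R}_{k\alpha}.
\]
Each $\hat{H}^{(k)}$ is $C^r$-close to the identity with the same bound $C_r/m$ as $\hat{H}$. A careful telescoping estimate, using that the commutator $[\hat{H},\ol{R}_{k\alpha}]$ has $C^0$-size $O(1/m)$ for every $k$, together with the specific form of $H$ coming from the appendix proof of Proposition \ref{pro:flow} (a suitable time-$1/m$ map of a flow), shows that $G$ is $C^0$-close to $\hat{H}^m$. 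Writing $m\alpha=p+\eta$ with $|\eta|<\epsilon_0$ and noting that translation by the integer $p$ is trivial on $\ol{\A}$, $h^m$ is a $C^0$-small perturbation of $\ol{R}_\eta\circ H^m$, hence of $H^m$ itself. Structural stability of hyperbolic sets then produces a hyperbolic invariant set $\tilde{K}$ for $h^m$ conjugate to the full 3-shift, whose lift still has first-coordinate width at most $1$ and satisfies $\hat{h}^m(\hat{z})\in (m\alpha+i)+\hat{\tilde{K}}$ for $\hat{z}\in\tilde{K}_i$.

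Finally, to prove shadowing, choose $\delta>0$ so small that any $\delta$-pseudo-orbit of $R_\alpha$ drifts less than $\epsilon_0$ over any window of $m$ consecutive steps. Given such a pseudo-orbit $\{x_n\}$, set $y_j:=x_{jm}\in\mathbb{S}^1$; summing pseudo-orbit errors gives $y_{j+1}=R_{m\alpha}(y_j)+\mathrm{drift}_j$ with $|\mathrm{drift}_j|<m\delta$, so $d_{\mathbb{S}^1}(y_{j+1},y_j)\le|\eta|+m\delta$ is very small. Use the 3-shift on $\tilde{K}$ to build $z\in\tilde{K}$ whose symbolic sequence $(i_n)\in\{-1,0,1\}^\Z$ is chosen inductively so that $(h^m)^j(z)_1\in B_{\epsilon/2}(y_j)$ for every $j$: in the lift, the per-block increment $\hat{h}^{m(j+1)}(\hat{z})_1-\hat{h}^{mj}(\hat{z})_1$ has the form $m\alpha+i_j+\Delta W_j$ with $|\Delta W_j|$ bounded by the horseshoe width (at most $1$), so as $i_j$ ranges over $\{-1,0,1\}$ and $m\alpha$ is within $\epsilon_0$ of an integer, the set of achievable per-block increments modulo $1$ covers the full circle, realizing in particular the required small drift of $\{y_j\}$. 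Since $h$ is $C^0$-close to the isometry $\ol{R}_\alpha$, the intermediate iterates $h^{jm+k}(z)$ with $0<k<m$ automatically track $x_{jm+k}$ within $\epsilon$, delivering the $\epsilon$-shadowing.

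The main obstacle is precisely this last step: implementing the inductive symbolic choice so that the three per-block increments absorb the accumulated pseudo-orbit drifts while respecting the Markov constraints of $\tilde{K}$. This argument relies jointly on the width-$1$ normalization of the crooked horseshoe and on the Liouvillian near-integrality of $m\alpha$, without both of which the three symbols would not span a full arc of $\mathbb{S}^1$, and the shadowing construction would break down.
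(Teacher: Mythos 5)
Your overall strategy (use Proposition \ref{pro:flow} with $m$ chosen via the Liouville property, then exploit the three-symbol dynamics of the crooked horseshoe to track the pseudo-orbit) is the right starting point, but the construction $h=\ol{R}_\alpha\circ H$ has a fatal flaw: you never rescale. In Definition \ref{def:crooked} the blocks $\hat K_{-1},\hat K_0,\hat K_1$ differ by \emph{integer} horizontal translations and the set $\hat K$ has width up to $1$, which is the full circumference of $\mathbb{S}^1$. Consequently the symbolic jumps $i_j\in\{-1,0,1\}$ are literally $0$ modulo $1$, and knowing that $h^{mj}(z)$ lies in $K_{i_j}$ constrains its first coordinate on the circle not at all. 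Your sentence ``the set of achievable per-block increments modulo $1$ covers the full circle'' because $|\Delta W_j|\le 1$ mistakes a total loss of control for freedom of choice: $\Delta W_j$ is determined by the orbit, not selectable, and an uncertainty of size $1$ in the first coordinate is the negation of $\epsilon$-shadowing. The paper's proof hinges on exactly the ingredient you omit: it passes to the $q$-sheeted covering $\tau(x,y)=(qx,y)$ and defines $\hat f(x,y)=\bigl(\tfrac1q\hat H_1(qx,y),\hat H_2(qx,y)\bigr)$, so that the horseshoe and its blocks shrink to horizontal scale $1/q$ and the three symbols produce jumps of $-1/q,0,+1/q$ --- which are nontrivial mod $1$ and exactly match the granularity of $\{x_{mn}\}$ viewed as a $1/q$-pseudo-orbit of the identity (here $m=q^r(q-1)$ and $h=f+(p/q,0)$ with $h^m=f^m$ since $q\mid m$). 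Without this rescaling there is no mechanism by which the symbolic dynamics steers the first coordinate around the circle.

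Two further steps do not survive scrutiny even granting the structure you assume. First, your telescoping estimate for $G=\hat H^{(m-1)}\circ\cdots\circ\hat H^{(0)}$: each conjugate differs from $\hat H$ by $O(1/m)$ in $C^0$, but composing $m$ such maps accumulates an error of order $m\cdot(1/m)=O(1)$, so the claim that $h^m$ is $C^0$-close to $H^m$ is unsupported; and in any case $C^0$-closeness is not enough to invoke structural stability of hyperbolic sets (one needs $C^1$). Second, the control of intermediate iterates $h^{jm+k}(z)$ for $0<k<m$ fails for the same accumulation reason: the per-step deviation of $h$ from the rotation is $O(1/m)$, so over a block of length $m$ you only get an $O(1)$ bound. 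In the paper the rescaling supplies the extra factor $1/q$ (the per-step bound is $C_r/(mq)$, accumulating to $C_r/q$, which is small by the choice $1/(q-1)<\epsilon\eta/(C_r+4)$); this is precisely where the Liouville condition and the covering construction interact, and it cannot be recovered in your unrescaled setup.
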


\begin{remark} The Liouville condition is necessary in order to control the $C^r$ distances for large values of $r$, because the proof involves regarding the $\delta$-pseudo-orbit $\{x_n\}$ of $R_\alpha$ as a pseudo-orbit of a rational rotation $R_{p/q}$. The `jump' size of $\{x_n\}$ regarded as a pseudo-orbit of $R_{p/q}$ is $\delta + |\alpha-p/q|$, and we need this size to be small when compared to $1/q^r$ due to a re-scaling that appears in the construction (see (\ref{eq:coverformula}) and (\ref{eq:bound1}) in the proof that follows). We may choose $\delta$ as small as desired, but the necessary bound on $|\alpha-p/q|$ depends precisely on $\alpha$ being Liouville. Note however that if one only wants $h$ to be $C^0$-close to $\ol{R}_{\alpha}$, simpler estimates allow to prove the same result for an arbitrary irrational $\alpha$ (due to the fact that the $q^r$ factor disappears).
\end{remark}

\begin{proof}[Proof of Proposition \ref{pro:chato}] Fix $\epsilon>0$ and a Liouville number $\alpha$. We need to construct, for any $r\in \N$, a map $h$ with the desired properties which is $C^r$-close to the rotation by $\alpha$. Thus we fix $r\in \N$ and $\eta>0$ from now on, and we will find $h$ satisfying the required properties such that $d_{C^r}(h, \ol{R}_\alpha)<\eta$. We may without loss of generality assume that $\epsilon$ and $\eta$ are both smaller than $1$.

Note that, for $p\in \Z$ and $q\in \N$, any $\delta$-pseudo orbit of $R_\alpha$ is a $(\delta+|\alpha-p/q|)$-pseudo orbit of $R_{p/q}$. The fact that $\alpha$ is Liouville implies that one may choose $q\in \N$ arbitrarily large and $p\in \Z$ such that 
\begin{equation}\label{eq:liouville} 
\Big|\frac{p}{q}-\alpha\Big|<\frac{1}{q^{r+2}}.
\end{equation}
Since $q$ may be chosen arbitrarily large, we will assume that 
\begin{equation}\label{eq:liouville2} 
\frac{1}{q-1}<\frac{\epsilon\eta}{C_r+4}.
\end{equation}
where $C_r$ is the constant from Proposition \ref{pro:flow}.

Set $m=q^{r}(q-1)$, and let $H\colon \ol{\A}\to \ol{\A}$ be a diffeomorphism as in Proposition \ref{pro:flow}. Since $H^m$ exhibits a crooked horseshoe, there is a hyperbolic set $K$ for $H^m$, a lift $\hat{H}$ of $H$ to the universal covering $\R\times [0,1]\to \ol{\A}$, and a set $\hat{K} = \hat{K}_{-1}\cup \hat{K}_{0}\cup \hat{K}_1\subset \R\times [0,1]$ which projects onto $K$ such that:
\begin{itemize}
\item The sets $\hat{K}_{-1}, \hat{K}_0, \hat{K}_1$ project to $\ol{\A}$ onto sets $K_{-1}, K_0, K_1$ partitioning $K$.
\item $\hat{H}^m(z)\in \hat{K}+(i,0)$ if $z\in \hat{K}_i$;
\item for any sequence $(i_n)_{n\in \Z}$ such that $i_n\in \{-1,0,1\}$, there is $z\in K$ such that $H^{mn}(z)\in K_{i_n}$ for all $n\in \Z$.
\item $\hat{K}$ has (horizontal) width at most $1$.
\end{itemize}
Since $H^m$ is isotopic to the identity, $\hat{H}^m$ commutes with the covering transformations $z\mapsto z+(k,0)$, $k\in \Z$.

Consider the homeomorphism $\hat{f}\colon \R\times [0,1]\to \R\times [0,1]$ defined by
\begin{equation}\label{eq:coverformula}
\hat{f}(x,y) = \Big(\frac{1}{q}\hat{H}_1(qx,y),\, \hat{H}_2(qx,y)\Big),
\end{equation}
where $\hat{H}_1$ and $\hat{H}_2$ are the coordinate functions of $\hat{H}$.
Then $\hat{f}$ commutes with the translations $z\mapsto z+(k/q,0)$, $k\in \Z$. In particular, $\hat{f}$ is the lift of some diffeomorphism $f\colon \ol{\A}\to \ol{\A}$, which in addition commutes with the periodic rotation $(x,y) \mapsto (x,y)+(1/q,0)$ of $\ol{\A}$.

Note that $f$ is a lift of $H$ to the $q$-sheeted covering
\begin{align*}
\tau \colon \R/\Z\times [0,1] &\to \R/\Z\times [0,1]\\
(x,y)&\mapsto (qx,y).
\end{align*}
Finally, consider the map $h=f+(p/q,0)$. Since $h$ is also a lift of $H$ to the aforementioned finite covering (because $h$ differs from $f$ by a covering transformation of $\tau$), and since $H$ is an Axiom A diffeomorphism with strong transversality, it follows easily that $h$ itself is Axiom A with strong transversality. 

A direct computation using (\ref{eq:liouville2}), (\ref{eq:coverformula}), and our choice of $m$ gives the following bound on the $C^r$-distance from $h$ to the rotation $\ol{R}_{p/q}$:
\begin{equation}\label{eq:bound1}
d_{C^r}(h,\ol{R}_{p/q}) = d_{C^r}(f,\id) \leq q^r d_{C^r}(H,\id) \leq q^r\frac{C_r}{m} = \frac{C_r}{q-1}<\eta,
\end{equation}
which is one of the conditions we wanted $h$ to meet. 

It remains to show that for some $\delta>0$, every $\delta$-pseudo-orbit of $R_\alpha$ is $\epsilon$-shadowed by the first coordinate of some orbit of $h$. We begin by observing that
\begin{equation}\label{eq:2}
\sup \{d(h(z)_1, R_{p/q}(z_1)): z\in \ol{\A}\} \leq d_{C^0}(h, \ol{R}_{p/q}) \leq d_{C^r}(h,\ol{R}_{p/q}) < \frac{C_r}{mq}
\end{equation}
where $(\cdot)_1$ denotes the first coordinate. For future reference note that (\ref{eq:2}) implies, for all $k\in \N$
\begin{equation}\label{eq:2prime}
\sup \{d(h^k(z)_1,R_{p/q}^k(z_1)) : z\in \ol{\A}\} < k\frac{C_r}{mq}.
\end{equation}
Indeed, this follows from a straightforward induction using (\ref{eq:2}) and noting that 
$$h^{k+1}(z)_1-R_{p/q}^{k+1}(z_1)= h(h^k(z))_1 - R_{p/q}(h^k(z)_1) + R_{p/q}(h^k(z)_1-R_{p/q}^k(z_1)).$$

Our choice $\delta$ will be the following
\begin{equation}\label{eq:delta}
\delta=\frac{1}{mq}-\frac{1}{q^{r+2}}>0.
\end{equation} 
Let $\{x_n\}$ be a $\delta$-pseudo orbit of $R_\alpha$. By (\ref{eq:liouville}) and (\ref{eq:delta}), the same sequence is also a $1/(mq)$-pseudo orbit for $R_{p/q}$, and thus $\{x_{mn}\}$ is a $1/q$-pseudo orbit for $R_{mp/q} = \id$. This means that $d(x_{mn}, x_{mn+1})\leq 1/q$ for all $n\in \Z$. 

Chose a reference point $(a,b)\in \hat{K}$, and let $(a',b')$ be the projection of $(a/q, b)$ to $\ol{\A}$. For each $n\in \Z$, let $j_n\in \Z$ be such that $x_{mn}$ lies in the smallest interval joining $a'+j_n/q$ to $a'+(j_n+1)/q$ in $\mathbb{S}^1$. Since $d(x_{mn}, x_{m(n+1)})\leq 1/q$, it follows easily that $j_{n+1}-j_n \in \{-1, 0, 1\}\, (\mathrm{mod}\, q)$. In particular, we may choose $i_{n+1}\in \{-1,0,1\}$ such that $i_{n+1} = j_{n+1}-j_n\, (\mathrm{mod}\, q)$. Thus, letting $j_n' = i_0+i_1+\cdots+i_{n-1}$, we have $j_n' = j_n\, (\mathrm{mod}\, q)$.

From the symbolic dynamics of $K$ for $H^m$ mentioned at the beginning of the proof, we know that there is $(s,t)\in K$ such that $H^{mn}(s,t) \in K_{i_n}$ for all $n\in \Z$. This means that, if $(\hat{s},t)$ is the point of $\hat{K}$ which projects to $(s,t)$, then 
$$\hat{H}^{mn}(\hat{s},t)\in \hat{K}+(i_0+i_1+\cdots + i_{n-1}, 0) = \hat{K}+ (j_n'-j_0',0).$$
Thus, $$\hat{H}^{mn}((\hat{s},t)+(j_0',0))= \hat{H}^{mn}(\hat{s},t)+(j_0',0) \in \hat{K} + (j_n',0)$$ for each $n\in \N$. Observe that $(a,b)+(j_n',0)\in \hat{K}+(j_n',0)$ as well. Since $\hat{K}$ has horizontal width at most $1$, the first coordinates of $\hat{H}^{mn}((\hat{s},t)+(j_0',0))$ and $(a,b)+(j_n',0)$ differ by at most $1$. By (\ref{eq:coverformula}), this implies that the first coordinates of $\hat{f}^{mn}(\hat{s}/q+j_0'/q, t)$ and $(a/q, b) + (j_n'/q,0)$ differ by at most $1/q$, which in turn means that, if $z$ denotes the projection of $(\hat{s}/q+j_0'/q,t)$ to $\ol{\A}$, 
$$d(f^{mn}(z)_1,\, a'+ j_n'/q) \leq 1/q,$$
where $(\cdot)_1$ denotes the first first coordinate. But $a'+j_n/q = a'+j_n'/q\in \mathbb{S}^1$ (because $j_n=j_n'\, (\mathrm{mod}\, q)$, and from the definition of $j_n$ we know that $d(a'+j_n/q,\, x_{mn})<1/q$. Thus, $d(f^{mn}(z)_1,\, x_{mn}) <2/q$. Noting that $h^m = f^m+mp/q = f^m$ (because $mp/q\in \Z$), we conclude that
\begin{equation}\label{eq:forpower}
d(h^{mn}(z)_1,\, x_{mn})< 2/q.
\end{equation}

\begin{figure}[ht]
\begin{center}
\includegraphics[height=9cm]{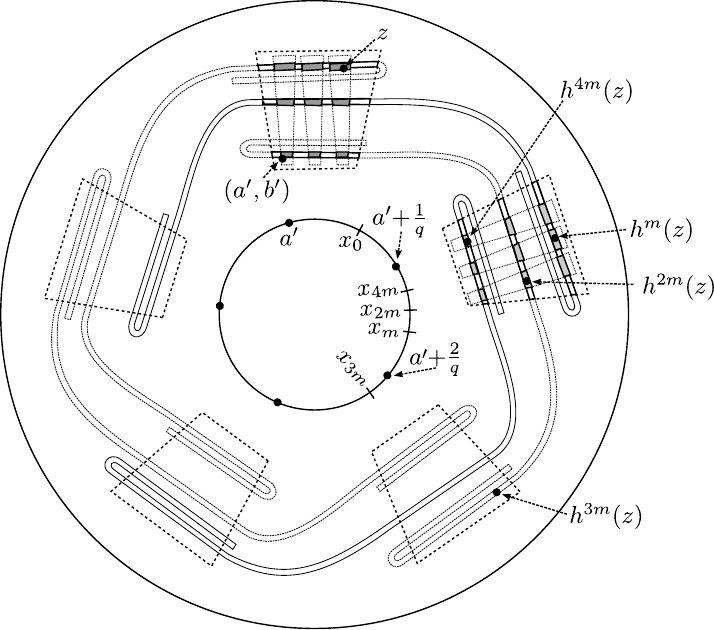}
\caption{$\{x_{mn}\}_{n\in \N}$ is a $1/q$-pseudo-orbit of $\id_{\mathbb{S}^1}$ shadowed by $\{h^{nm}(z)_1\}_{n\in \N}$.}
\label{fig:finitecovercrook}
\end{center}
\end{figure}
Figure \ref{fig:finitecovercrook} attempts to illustrate the situation with $h^m$ in $\ol{\A}$ (in the figure, $q=5$, and the 5 large blocks that are illustrated contain the preimages of $\hat{K}$ via the finite covering $\tau$; compare with Figures \ref{fig:horseshoe} and \ref{fig:lift-horse}Á).

From (\ref{eq:forpower}), (\ref{eq:2prime}), and the fact that $R_{p/q}^k$ is an isometry, follows that if $0\leq k\leq m-1$,  
\begin{align*}% here we use that h^k(z)-R^k_{p/q} \leq kC_0/mq
d(h^{nm+k}(z)_1, x_{nm+k})&\leq d(h^k(h^{nm}(z))_1, R_{p/q}^k(h^{nm}(z)_1)) + d(h^{nm}(z)_1,x_{nm}) \\ &\hspace{1cm}+ d(R^k_{p/q}(x_{nm}),x_{nm+k})\\ &\leq \frac{kC_r}{mq} + \frac{2}{q}+\frac{k}{mq} \leq \frac{C_r+3}{q} < \eta\epsilon \frac{C_r+3}{C_r+4} < \epsilon
\end{align*}
where we also used (\ref{eq:liouville2}) and the fact that $\{x_n\}$ is a $1/(mq)$-pseudo orbit for $R_{p/q}$.
Thus $\{h^n(z)_1\}$ $\epsilon$-shadows $\{x_n\}$. This completes the proof.
\end{proof}

\subsection{Construction of $f$} \label{sec:construct}
We now construct a diffeomorphism of the cylinder $\aC_1= \mathbb{S}^1\times [-1,1]$ which satisfies properties (1)-(4) and (6) of \S\ref{sec:exo}, which concludes the proof of Theorem \ref{th:B}. 

To define $f$, fix a Liouville number $\alpha$ and denote by $R_\alpha\colon \A \to \A$ the map $R_\alpha(x,y)=(x+\alpha,y)$. We first choose a sequence of pairwise disjoint closed annuli $\{A_i:i> 0\}$ of the form $A_i=\mathbb{S}^1\times [a_i,b_i]$ with the following properties:
\begin{enumerate}
\item $b_1=1$, and $b_n>a_n>b_{n+1}>0$ for all $n\in \N$.
\item $b_n-a_n\to 0$ as $n\to \infty$;
\item\label{cond:3} $1/n^2\leq a_n-b_{n+1}\leq 1$ (this estimates the distance between $A_n$ and $A_{n+1}$).
\item $A_n$ converges to the circle $C=\mathbb{S}^1\times \{0\}$ as $n\to \infty$; (this follows from the previous items).
\end{enumerate} 
We will choose a sequence $h_n\colon A_n\to A_n$ of diffeomorphisms such that $h_n$ has the properties of the map $h$ of Proposition \ref{pro:chato} with the annulus $A_n$ instead of $\ol{\A}$, using $\epsilon = 1/n$. In other words, 
\begin{itemize}
\item $h_n$ is Axiom A with strong transversality;
\item There exists $\delta>0$ such that every $\delta$-pseudo-orbit of $R_\alpha$ is $1/n$-shadowed by the first coordinate of some orbit of $h$.
\end{itemize}
Moreover, we may choose $h_n$ such that 
\begin{equation}
\label{eq:ex1}
d_{C^n}(h_n,R_\alpha)<(2n)^{-2n}/K_n
\end{equation}
 where $K_n$ is a constant that we will specify later. Using $H^{-1}$ instead of $H$ in Proposition \ref{pro:flow}, we may obtain a map with the same properties as $h_n$ in Proposition \ref{pro:chato} such that its inverse is attracting instead of repelling on the boundary of the annulus. Thus we may assume that for $h_n$, the boundary of $A_n$ is repelling if $n$ is odd and attracting if $n$ is even. 

Moreover, from the proof of Proposition \ref{pro:flow}, it is possible to assume that the restriction of $H$ (and thus of $h_n$) to a neighborhood of the boundary components of $A_n$ has a simple dynamics, namely the product of a Morse-Smale diffeomorphism $g_n$ of the circle and a linear contraction or expansion; that is, 
$$h_n(x,y) = (g_n(x), L_n^\pm(y))$$
for $(x,y)$ in a neighborhood of $\bd^\pm A_n$, where $L_n^\pm(x,y) = \lambda_n(y-y_n^\pm)+y_n^\pm$.

Since the boundary of $A_n$ is attracting if $n$ is odd and repelling if $n$ is even, $\lambda_n>1$ and $\lambda_{n+1}<1$ or vice versa (we will assume the first case).

We define $f|_{A_n} = h_n$. To define $f$ in the regions between the $A_i$'s, let let $B_n$ be the annulus between $A_n$  and $A_{n+1}$. 
For $(x,y)\in B_n$, we define $f(x,y) = (f(x,y)_1, f(x,y)_2)$ using convex combinations:
\begin{align*} 
f(x,y)_1&=g_n(x)+\phi\left(\frac{y-y_n^-}{y_{n+1}^+-y_n^-}\right)(g_{n+1}(x)-g_n(x))\\
f(x,y)_2&= L_n^-(y) + \phi\left(\frac{y-y_n^-}{y_{n+1}^+-y_n^-}\right)(L_{n+1}^+(y)-L_n^-(y))
\end{align*}
where $\phi\colon \R\to \R$ is a fixed $C^\infty$ function such that $\phi(x) = 0$ if $x<0$, $\phi(x)=1$ if $x>1$ and $0\leq \phi(x)\leq 1$. We further asusme that $\phi$ is strictly increasing. Since $\phi$ is fixed, the constant $K_n$ that we used in the choice of $h_n$ can be chosen so that $\norm{\phi}_{C^n}<K_n$ and $K_n\geq 2$.

For convenience, let $t(y) =  \frac{y-y_n^-}{y_{n+1}^+-y_n^-}$ and $\Delta_n = y_{n+1}^+-y_n^-$. Note that $\abs{\Delta_n}$ is the distance from $A_n$ to $A_{n+1}$, so from condition (\ref{cond:3}) at the beginning of the proof we have
\begin{equation*}\label{eq:ex-delta} 1\geq \abs{\Delta_n}\geq 1/n^2\end{equation*}
Also note that from (\ref{eq:ex1}), 
$$d_{C^n}(g_n, g_{n+1})\leq 2(2n)^{-2n}/K_n,$$
so that if $0\leq i+j\leq n$ and $i,j\geq 0$, 
\begin{align*}
\abs{\frac{\partial^{i+j}}{\partial x^i \partial y^j}\left(\phi(t(y))(g_{n+1}(x)-g_n(x))\right)} &= 
\abs{\frac{\phi^{(j)}(t(y))}{\Delta_n^j}\left(g_{n+1}^{(i)}(x)-g_n^{(i)}(x)\right)}\\
&\leq 2/((2n)^{2n} \abs{\Delta_n^n} \leq 2/4^n
\end{align*}
and if $1\leq i\leq n$, using Leibniz's formula and the fact that $L_{n+1}^+$ and $L_n^-$ are affine maps, we find (again using (\ref{eq:ex1}))
\begin{multline*}
\abs{\frac{\partial^i}{\partial y^i}\left(\phi(t(y))(L_{n+1}^+(y)-L_n^-(y))\right)}\\= \abs{\frac{\phi^{(i)}(t(y))}{\Delta_n^i}\left(L_{n+1}^+(y) - L_n^-(y)\right) + n\frac{\phi^{(i-1)}(t(y))}{\Delta_n^{i-1}}(\lambda_{n+1}-\lambda_n)}
\\ \leq \frac{K_n}{\Delta_n^n}(2(2n)^{-2n}/K_n) + n\frac{K_n}{\Delta_n^n}(2(2n)^{-2n}/K_n)\\ \leq 2(n+1)/((2n)^{2n}\abs{\Delta_n^n})\leq 2(n+1)/4^n.$$
\end{multline*}
Putting these facts together, we see that
\begin{equation}\label{eq:crdist1}
d_{C^n}(f|_{B_n}, R_\alpha|_{B_n})\leq 2(n+1)/4^n\xrightarrow{n\to\infty} 0.
\end{equation}

\begin{figure}[ht]
\begin{center}
\includegraphics[height=5cm]{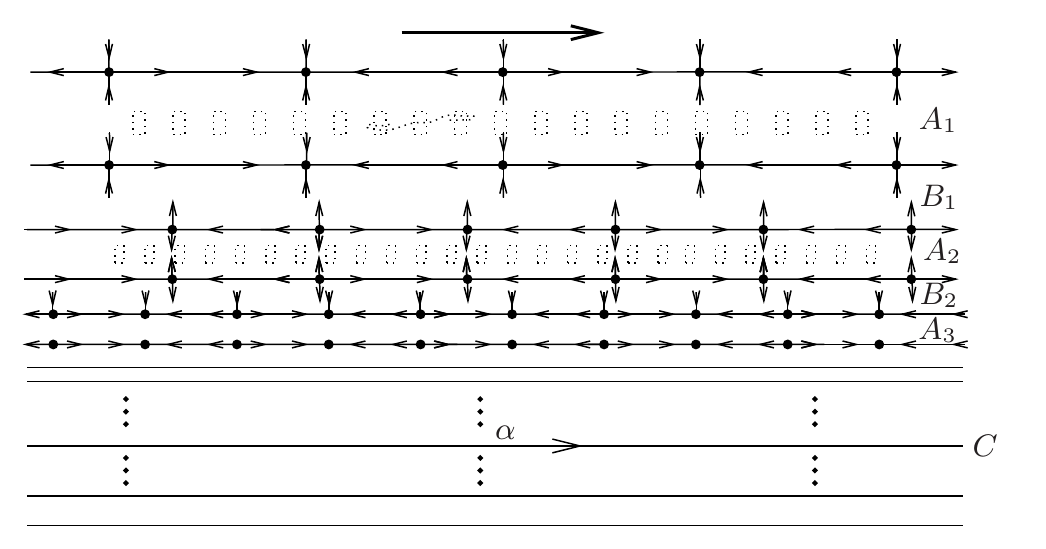}
\caption{The map $f$}
\label{fig:example}
\end{center}
\end{figure}

Note that the dynamics of $f$ in $B_n$ is trivial: the boundary components of $B$ are one attracting and the other repelling and there is no recurrence in the interior of $B$. In fact it is easy to see that $f(x,y)_2 - y$ is always positive or always negative for $(x,y)\in B_n$ (depending on the parity of $n$). 

To see that $f|_{B_n}$ is Axiom A, note that the nonwandering set of $f|_{B_n}$ is contained in the boundary of $B_n$ (which is in $A_{n+1}$ or $A_n$) so it consists of periodic points which are hyperbolic and finitely many. Moreover, since the boundary components of $B_n$ are one attracting and one repelling, a homoclinic intersection between saddles can only happen if the saddles are in different boundary components. A small perturbation supported in the interior of $B_n$ which does not affect our estimates ensures that all such intersections are transverse, guaranteeing the strong transversality condition. Note that $f$ is also Axiom A with strong transversality in each $A_n$. 

We have defined $f$ in $\mathbb{S}^1\times [0,1]$ (see Figure \ref{fig:example}). Repeating this procedure symmetrically in $\mathbb{S}^1\times [-1,0]$ and letting $f|_C = R_\alpha|_{C}$, we obtain a map $f\colon \aC_1=\mathbb{S}^1\times [-1,1] \to \aC_1$. By construction, $f$ is $C^\infty$ in $\aC_1\sm C$. To see that $f$ is also $C^\infty$ in $C$, it suffices to note that from (\ref{eq:ex1}) and (\ref{eq:crdist1}), 
$$d_{C^r}(f|_{\mathbb{S}^1\times (0,t]}, R_\alpha|_{\mathbb{S}^1\times (0,t]})\xrightarrow[t\to 0^+]{} 0,$$
and a similar fact replacing $(0,t]$ by $[-t,0)$, which easily implies that $f$ is $C^\infty$ in $C$ as well.

We note that $f$ is Kupka-Smale because it is Axiom A with strong transversaility in each of the invariant annuli composing $\aC_1$ (and there are no periodic points in $C$), and $f$ clearly satisfies the required properties (1)-(4) from \S\ref{sec:exo} using $\aC_k=\mathbb{S}^1\times [-b_k,b_k]$. From our choice of $f$ in each annulus $A_n$ (as a map $h_n$ obtained from Proposition \ref{pro:chato}) one sees that property (6) from \S\ref{sec:exo} also holds, completing the construction.

\section{Shadowing for one dimensional endomorphisms} 
\label{sec:onedim}

We now consider  smooth one dimensional endomorphisms on the circle (at least $C^2$) assuming that  the Holder shadowing property holds with Holder constant larger than $\frac{1}{2}.$
 We recall first some definitions. Let $f$ be a $C^r$ endomorphism of the circle.

\begin{definition}\label{holder} Let $f$ be a $C^r$ endomorphism of the circle and $\al\leq 1$,  $f$ has the $\al$-Holder shadowing property if there exists $C>0$ such that any $\e$-pseudo-orbit with $\e>0$ is $C\e^\al$-shadowed by an orbit. 
\end{definition}

\begin{definition}\label{expanding} A $C^r$ endomorphism of the circle  $f$ is expanding, if there exist $C>0$ and  $\la>1$ such that $\abs{(f^n)'(x)}>C\la^n$ for any $x$ in the circle and for any positive integer $n$. 
\end{definition}

\begin{definition}
\label{def:turning} A critical point $c$ (i.e., $f'(c)=0$) is a turning point if $c$ is either a local minimum or a local maximum of $f$.
\end{definition}

To prove Theorem \ref{crit-1} we will need the following result. 

\begin{theorem}\label{th:localhomeo} If $f$ is a transitive non-invertible local homeomorphism of the circle, then $f$ is topologically conjugate to a linear expanding map.
\end{theorem}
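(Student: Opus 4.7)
The plan is to produce a semi-conjugacy between $f$ and a linear expanding map and then use transitivity to promote it to a conjugacy. Since $f$ is a local homeomorphism of the compact space $S^1$, it is a covering map of some degree $d\in\Z$; non-invertibility forces $|d|\geq 2$. Writing $m_d(x)=dx\pmod 1$, I would lift $f$ to $F\colon\R\to\R$ with $F(x+1)=F(x)+d$. Since $F(x)-dx$ is $1$-periodic, hence bounded by some constant $M$, the maps $H_n=F^n/d^n$ satisfy $\abs{H_{n+1}(x)-H_n(x)}\leq M/|d|^{n+1}$ and converge uniformly to a continuous non-decreasing $H\colon\R\to\R$ with $H(x+1)=H(x)+1$ and $H\circ F=dH$. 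This descends to a continuous monotone degree-one map $h\colon S^1\to S^1$ with $h\circ f=m_d\circ h$.

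The main step is to show $h$ is injective. Let $U\subset S^1$ be the union of all maximal open intervals on which $h$ is constant, so $h$ is injective if and only if $U=\emptyset$. Using $h\circ f=m_d\circ h$ together with the local injectivity of $f$ and $m_d$, I would verify that $U$ is completely $f$-invariant, \ie $f(U)=U=f^{-1}(U)$, so $V=S^1\sm U$ is also completely invariant. Assuming for contradiction that $U\neq\emptyset$, pick $x\in S^1$ with dense forward orbit. If $x\in V$ then forward invariance forces $V=S^1$, giving $U=\emptyset$ and contradicting the assumption; hence $x\in U$. Since a dense orbit in $S^1$ visits every nonempty open set infinitely often, each component $I_i$ of $U$ is visited infinitely often by the orbit of $x$. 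The relation $f(I_i)\subseteq I_{\phi(i)}$ defines a self-map $\phi\colon\mathcal I\to\mathcal I$ on the set $\mathcal I$ of components, and the sequence of components visited by the orbit is exactly the forward $\phi$-orbit of a fixed starting index. A forward orbit of a self-map of a set that hits every element infinitely often must be a single finite periodic cycle covering the whole set, so $\mathcal I=\{I_1,\ldots,I_k\}$ is finite and $\phi$ is a cyclic permutation.

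The covering degree then closes the argument. Bijectivity of $\phi$ combined with $f^{-1}(U)=U$ shows $f^{-1}(I_{\phi(i)})\subseteq I_i$, so all $|d|\geq 2$ preimages of any $y\in I_{\phi(i)}$ lie in the single interval $I_i$. But $f|_{I_i}$ is continuous and locally injective on an interval, hence strictly monotone and globally injective, giving at most one preimage of $y$ in $I_i$---a contradiction. Therefore $U=\emptyset$ and $h$ is a continuous bijection of $S^1$, hence a homeomorphism conjugating $f$ to the linear expanding map $m_d$. The most delicate point is the combinatorial step forcing $\mathcal I$ finite and cyclically permuted by $\phi$; the subsequent covering-degree contradiction is then immediate.
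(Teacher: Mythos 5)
Your argument is correct, and it shares the paper's overall skeleton: $f$ is a covering of degree $|d|\geq 2$, hence semiconjugate to the linear map $E_d$ by a monotone degree-one map $h$, and transitivity is then used to show $h$ has no nontrivial fibers. The endgame, however, is genuinely different. The paper takes a single nontrivial fiber $I=h^{-1}(x)$, uses transitivity to find $k$ with $f^k(I)\cap I\neq\emptyset$, deduces $E_d^k(x)=x$ and hence $f^k(I)\subset I$, and then observes that $\bigcup_{i=0}^{k-1}f^i(I)$ is dense while $h$ is constant on each piece, so $h$ would have finite image, contradicting its surjectivity. You instead work with the entire collapsed set $U$, prove it is completely invariant, use a dense forward orbit to show its components form a single finite cycle permuted by $f$, and derive the contradiction from the covering degree: all $|d|\geq 2$ preimages of a point of the image component would have to lie in a single arc on which the local homeomorphism $f$ is injective. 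Your route is somewhat longer and the combinatorial step (finitely many components, cyclically permuted) needs the care you give it, but it exploits the local-homeomorphism hypothesis more directly at the last step, whereas the paper's contradiction needs only the surjectivity of $h$; your explicit construction of $h$ as the limit of $F^n/d^n$ also replaces the paper's appeal to the standard reference for the semiconjugacy. Both proofs are sound.
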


\begin{proof} Since $f$ is a local homeomorphism, if the degree of $f$ is $1$ or $-1$ it follows that $f$ is a homeomorphism, contradicting the assumption that $f$ is non-invertible. Thus the degreee $d$ of $f$ satisfies $\abs{d}\neq 1$. This implies (for example, see \cite[Prop. 2.4.9]{katok-book} that $f$ is semiconjugated to the linear expanding map $E_d\colon x\mapsto dx\, (\mathrm{mod}\, 1)$ via a monotone map $h$ of degree $1$; \ie $hf = E_d h$ and $h$ is a continuous surjection such that $h^{-1}(x)$ is a point or an interval for each $x$. Suppose $I=h^{-1}(x)$ is a nontrivial interval for some $x$. Then from the transitivity it follows that there is $k>0$ such that $f^k(I)\cap I\neq \emptyset$. Since $h(f^k(I)) = E_d^k h(I)$ and $h(I)=x$, we conclude that $E_d^k(x)=x$, and so $f^k(I)=I$. The transitivity of $f$ implies then that $\bigcup_{i=0}^{k-1} f^i(I)$ is dense in the circle, which in turn implies that $h$ has a finite image, a contradiction. This shows that $h^{-1}(x)$ is a single point for every $x$, so that $h$ is a homeomorphism and $f$ is conjugate to $E_d$.
\end{proof}

Before proceeding to the proof of Theorems \ref{crit-1} and \ref{crit-robust}, let us introduce some notation.

\begin{notation} If $x,y$ are points in the circle, the interval notation $(x,y)$ denotes the smallest of the two intervals in the complement of the two points (when there is a smallest one). Similarly one can define $[x,y)$, $(x,y]$ and $[x,y]$.
\end{notation}

\begin{notation} If $\epsilon>0$ is given, and $\epsilon\mapsto\delta(\epsilon)>0$, $\epsilon\mapsto g(\epsilon)>0$ are real functions, we use the following notations:
\begin{itemize}
\item $\delta = O(g(\epsilon))$ if there is a constant $C$ and $\epsilon_0>0$ such that $\delta(\epsilon)<Cg(\epsilon)$ whenever $0<\epsilon<\epsilon_0$.
\item $\delta \approx g(\epsilon)$ if $g(\epsilon)/\delta(\epsilon) \to 1$ as $\epsilon\to 0^+$.
\end{itemize}
\end{notation}
\subsection*{Proof of Theorem \ref{crit-1}}
We will prove that $f$ has no turning point. This is enough to complete the proof, because it implies that $f$ is a local homeomorphism, and from Theorem \ref{th:localhomeo} one concludes that $f$ is topologically conjugate to a linear expanding map as required.

Let $c_1,\dots, c_k$ be the turning points, and fix $\gamma>0$ such that for each $i$, the interval $J_i = (c_i-\gamma, c_i+\gamma)$ is such that $f(J_i) = (a_i,f(c_i)]$ for some $a_i$, and $f$ is injective in $(c_i-\gamma, c_i]$ and $[c_i,c_i+\gamma)$. We may also assume that the intervals $J_1,\dots, J_k$ are pairwise disjoint

Fix $\epsilon>0$ and $1\leq i\leq k$, and let $z_i\in J_i$ be such that $\dist(z_i,c_i)=\epsilon$. 
Then $\dist(f(z),f(c_i)) = O(\epsilon^2)$, because $f$ is $C^2$ and $c_i$ is a turning point. The pseudo orbit 
$$\{z_i,   f(c_i), f^2(c_i),\dots, f^{j}(c_i)\dots\},$$
has a single ``jump'' of length $O(\epsilon^2)$, and therefore it is $\delta$-shadowed by the orbit of a point $x_i$ for some $\delta = O(\epsilon^{2\alpha})$. Thus $\dist(x_i, z_i)\leq \delta$ and $\dist(f^{j}(x_i), f^j(c_i)) \leq \delta$ for all $j\geq 1$.
 If $\epsilon$ is small enough then $x_i\in J_i$ and there is a point $x_i'\in J_i$ such that $f(x_i')=f(x_i)$ and $c\in (x_i,x_i')$. It is easy to see that $\dist(x_i', c_i)\approx \dist(x_i,c_i)) \geq \epsilon-\delta$. In particular, provided that $\epsilon$ is small enough,  we have that $(c_i-\epsilon/2, c_i+\epsilon/2)\subset I_i := (x_i, x_i')$ (note that $2\alpha>1$, so that $\delta<\epsilon/2$ if $\epsilon$ is small). Also observe that $f(I_i) = (f(x_i), f(c_i)]$.

\begin{claim} There is $n_i\in \{1,\dots, k\}$ and $j_i\geq 0$ such that $c_{n_i}\in f^{j_i}(I_i)$ and $\diam(f^{j_i}(I_i)) \leq \delta$.
\end{claim}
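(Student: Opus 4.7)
I plan to let $j_i \geq 1$ be the smallest index for which $f^{j_i}(I_i)$ contains one of the turning points $c_1,\ldots,c_k$, and to establish existence and the diameter bound separately.

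For the diameter bound, I would argue by induction on $j \leq j_i$. Since no turning point lies in $f^j(I_i)$ for $j < j_i$, the map $f|_{f^j(I_i)}$ is monotone, so $f^{j+1}(I_i)$ is a single interval whose two endpoints are $f^{j+1}(x_i) = f^{j+1}(x_i')$ and $f^{j+1}(c_i)$. Shadowing gives $\dist(f^{j+1}(x_i), f^{j+1}(c_i)) \leq \delta$, so the arc $f^{j+1}(I_i)$ has diameter in $[0,\delta] \cup [1-\delta,1]$. On the other hand, the derivative estimate $\diam(f^{j+1}(I_i)) \leq M \cdot \diam(f^j(I_i))$ (with $M = \sup|f'|$) and the base estimate $\diam(f(I_i)) = O(\epsilon^2) \leq \delta$ (valid for small $\epsilon$ since $\alpha \leq 1$) keep $\diam(f^j(I_i)) \leq \delta$ throughout, because for small $\epsilon$ one has $M\delta < 1-\delta$, which excludes the ``long'' alternative. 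The same bounds at step $j = j_i$, where $f^{j_i-1}(I_i)$ is still turning-point-free and short, give $\diam(f^{j_i}(I_i)) \leq M\delta < 1 - \delta$, and hence $\diam(f^{j_i}(I_i)) \leq \delta$ as claimed.

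For the existence of $j_i$ I would argue by contradiction. If no such index existed, the previous argument would apply for every $j \geq 1$, showing that each $f^j(I_i)$ is an interval of diameter $\leq \delta$ disjoint from the turning set, and that $f$ is a local diffeomorphism on the forward-invariant open set $U := \bigcup_{j \geq 1} f^j(I_i)$. Transitivity then makes $U$ dense; any dense forward orbit must eventually land in $U$ and stay there by forward invariance, so $U$ carries such an orbit, while $U^c$ is closed, nowhere dense, contains all turning points, and satisfies $f^{-1}(U^c) \subset U^c$. The main obstacle is turning this setup into a contradiction: I would rely on the standard fact that for a transitive $C^2$ multimodal endomorphism of the circle the forward orbit of every critical point is dense, so $\{f^j(c_i)\}_{j\geq 1}$ accumulates on every turning point $c_n$ from both sides. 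Since $f^j(c_i)$ is one endpoint of the short interval $f^j(I_i)$ and the direction in which $f^j(I_i)$ extends from that endpoint is determined by the orientation of $f^{j-1}$ restricted to $f(I_i)$, a pigeonhole argument on the two possible orientations, combined with the two-sided density of the approach of $f^j(c_i)$ to $c_n$, produces some $j$ at which $c_n \in f^j(I_i)$, contradicting the assumption. Carefully tracking these orientations is the most delicate step of the argument.
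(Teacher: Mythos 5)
Your treatment of the diameter bound is essentially the paper's argument and is in fact slightly more careful: the paper simply observes that $f^{j_i-1}$ is injective on $f(I_i)=(f(x_i),f(c_i)]$, so $f^{j_i}(I_i)$ is an arc with endpoints $f^{j_i}(x_i)$ and $f^{j_i}(c_i)$, which are $\delta$-close by the shadowing; your inductive use of $M=\sup|f'|$ to rule out the ``long arc'' alternative is a legitimate way to make that step airtight.

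The existence of $j_i$, however, is where your proposal has a genuine gap. You invoke as a ``standard fact'' that for a transitive $C^2$ multimodal endomorphism the forward orbit of every critical point is dense. Transitivity only gives \emph{one} dense orbit, not density of every orbit, and critical orbits of transitive maps can be finite (Chebyshev-type examples, where a critical value is eventually fixed, are transitive); so this fact is false in general and certainly not available here. Moreover, even granting two-sided accumulation of $\{f^j(c_i)\}_{j\ge 1}$ on some turning point $c_n$, the pigeonhole-on-orientations step does not close: the arcs $f^j(I_i)$ carry only an \emph{upper} bound $\delta$ on their length and may shrink to zero, so having $f^j(c_i)$ approach $c_n$ from the side toward which $f^j(I_i)$ extends does not force $c_n\in f^j(I_i)$ --- the arc may simply be shorter than $\dist(f^j(c_i),c_n)$. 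The paper avoids all of this with a softer argument: by transitivity $f^m(I_i)\cap I_i\neq\emptyset$ for some $m>0$, so $L=\bigcup_{n\ge 1}f^{nm}(I_i)$ is connected and satisfies $f^m(L)\subset L$; if no forward iterate of $I_i$ met a turning point, $L$ would be a proper subinterval of the circle on which $f^m$ is injective, hence monotone, so every orbit of $f^m$ in $L$ would converge to a (semi-attracting) periodic point; since the dense orbit must enter the open set $L$ and then return to $L$ along the arithmetic progression of times, its $\omega$-limit set would be finite, contradicting transitivity. You should replace your existence argument by one of this kind.
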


\begin{proof}  Let us first show that $f^{j}(I_i)$ contains a turning point for some $j\geq 1$: By transitivity, there is $m>0$ such that $f^m(I_i)\cap I_i\neq \emptyset$. The set $L=\bigcup_{n=1}^{\infty} f^{nm}(I_i)$ is connected, so it is either an interval or the whole circle. Moreover, $f^m(L)\subset L$. Suppose for contradiction that $f^j(I_i)$ does not contain a turning point, for any $j\geq 1$. Then neither does $L$, and since there is at least one turning point, it follows that $L$ is not the whole circle. Thus $L$ is an interval such that $f^m(L)\subset L$, and since $f$ has no turning point in $L$ it follows that $f^m|_L$ is injective. This implies that the $\omega$-limit of any point in $L$ by $f^m$ is a semi-attracting fixed point for $f^k$, which contradicts the transitivity of $f$. 

Now let $j_i$ be the first positive integer such that $f^{j_i}(I_i)$ contains a turning point, and let $c_{n_i}$ be such turning point. Since for $1\leq j< j_i$ there is no turning point in $f^j(I_i)$, it follows that $f|_{f^j(I_i)}$ is injective, and so $f^{j_i-1}|_{f(I_i)}$ is injective. This implies that $f^{j_i}(I_i) = (f^{j_i}(x_i), f^{j_i}(c_i))$, and so  $\diam(f^{j_i}(I_i)) = \dist(f^{j_i}(x_i), f^{j_i}(c_i)) \leq \delta$, as claimed. 
\end{proof}

To complete the proof of the theorem, let us use the notation $A\Subset B$ to mean that $\ol{A}\subset B$. Note that if $\epsilon$ is chosen small so that $\delta<\epsilon/2$, we have that $f^{j_i}(I_{i})\Subset I_{n_i}$ because  $I_{n_i}$ contains $(c_{n_i}-\epsilon/2, c_{n_i}+\epsilon/2)$ and $\diam(f^{j_i}(I_{i}))<\epsilon/2$. 
The map $\tau \colon \{1,\dots, k\} \to \{1,\dots, k\}$ defined by $\tau(i) = n_i$ has a periodic orbit because the space is finite, so that there is a sequence $i_1, i_2,\dots, i_m=i_1$ such that $f^{j_{i_r}}(I_{i_r}) \Subset I_{n_{i_{r+1}}}$ for $1\leq r< m$. Letting $N=j_{i_1}+j_{i_2}+\cdots+j_{i_{m-1}}$, we conclude that $f^{N}(I_{i_1}) \Subset I_{i_1}$. This contradicts the transitivity of $f$, completing the proof.

\qed

\subsection*{Proof of Theorem \ref{crit-robust}}
From Theorem \ref{crit-1} follows that $f$ has is a local homeomorphism conjugate to a linear expanding map. Let $\pi\colon \R\to \mathbb{S}^1= \R/\Z$ be the universal covering, and $F\colon \R\to \R$ a lift of $f$. Write $F_t(x)=F(x)+t$, and let $f_t\colon \mathbb{S}^1\to \mathbb{S}^1$ be the map lifted by $F_t$. Since $f$ has no turning points and preserves orientation, $F$ is increasing, and the same is true for $F_t$. Since $f$ is $C^r$-robustly transitive, there is $\gamma>0$ such that $f_t$ is transitive for all $t\in (-\gamma,\gamma)$

\setcounter{claim}{0}
\begin{claim}\label{claim:rob1} For every open interval $I\subset \mathbb{S}^1$ and $x\in \mathbb{S}^1$ there is $n>0$ and $y\in I$ such that $f^n(y)=x$.
\end{claim}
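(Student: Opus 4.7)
The plan is to deduce the claim from the conjugacy $h\circ f = E_d\circ h$ with a linear expanding map $E_d\colon x\mapsto dx\pmod 1$ provided by Theorem \ref{crit-1}, thereby reducing the problem to the well-known corresponding property of $E_d$. Here $d=\deg(f)$, and since $f$ preserves orientation and is non-invertible (as established in the preceding lines via Theorem \ref{crit-1} and the proof of Theorem \ref{th:localhomeo}), we have $d\geq 2$.

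First I would observe that $h(I)$ is a nonempty connected open subset of $\mathbb{S}^1$, hence either the whole circle (in which case one iterate already works) or an open interval of some positive length $\ell>0$. Next, I would show that $E_d^n(h(I))=\mathbb{S}^1$ for some $n\geq 1$. This is the standard expanding-map fact: as long as $E_d^k(h(I))$ remains a proper interval, the map $E_d$ acts on it as a local scaling by $d$, so its length equals $d^k\ell$; once $d^k\ell\geq 1$ the image must wrap around and cover $\mathbb{S}^1$.

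Finally, applying $h^{-1}$ to the equality $E_d^n(h(I))=\mathbb{S}^1$ yields $f^n(I)=\mathbb{S}^1$, so $x\in f^n(I)$ and there exists $y\in I$ with $f^n(y)=x$, as required. There is essentially no obstacle: the argument is immediate once the conjugacy from Theorem \ref{crit-1} is invoked, and the only point requiring a line of justification is the elementary expanding behavior of $E_d$ on open intervals.
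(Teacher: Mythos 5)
Your proof is correct, but it takes a different route from the paper's. The paper proves the claim intrinsically: by transitivity some $f^k(I)$ returns to $I$, the connected set $L=\bigcup_{n\geq 0}f^{nk}(I)$ is forward invariant under $f^k$, and if $L$ were a proper interval then $f^k|_L$ would be injective (no turning points) and hence possess a semi-attracting fixed point in $L$, contradicting transitivity; so $L=\mathbb{S}^1$. This only uses transitivity and the absence of turning points, and deliberately mirrors the Claim inside the proof of Theorem \ref{crit-1}. You instead push everything through the conjugacy $h\circ f=E_d\circ h$, which is indeed already available at this point in the argument (it is the first sentence of the proof of Theorem \ref{crit-robust}), and then invoke the elementary fact that $E_d$ multiplies the length of a short arc by $d\geq 2$ until the image wraps around the circle. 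Both arguments are sound. Yours is shorter given the conjugacy, and it actually yields the slightly stronger conclusion that a single $n$ satisfies $f^n(I)=\mathbb{S}^1$ (uniformly in $x$), together with a quantitative bound $n=O(\log(1/\ell))$ in terms of $\ell=|h(I)|$; the paper's version is softer, avoids appealing to the conjugacy a second time, and reuses a lemma pattern already present earlier in the section. The only points worth spelling out in your write-up are that $h$ is a genuine homeomorphism (so $h(I)$ is a nondegenerate open arc and $h^{-1}$ can be applied at the end), and the boundary case $d^k\ell=1$, where the open lifted interval projects onto the circle minus a point and one more iterate is needed.
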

\begin{proof} We need to show that $\bigcup_{n\geq 0}f^n(I)=\mathbb{S}^1$. Suppose not. Since $f$ is transitive, there is $k\in \N$ such that $f^k(I)\cap I\neq \emptyset$. Let $L=\bigcup_{n\geq 0} f^{nk}(I)$. Then $L$ is a connected set, so it is either an interval or the whole circle, and $f^k(L)\subset L$. Suppose $L$ is an interval. Then $f^k|_L$ is injective, because it has no turning points. This implies that there is a semi-attracting fixed point for $f^k$ in $L$, contradicting the transitivity of $f$. Thus $L=\mathbb{S}^1$, and this proves the claim.
\end{proof}

\begin{claim}\label{claim:rob2} For each $z\in \R$ and $t>0$, $F_t^n(z) \geq F^n(z) + t$.
\end{claim}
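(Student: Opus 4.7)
The plan is to prove the inequality by straightforward induction on $n$, using only the hypothesis (established in the paragraph preceding the claim) that $F$ is a strictly increasing lift of $f$.

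For the base case $n=1$, the definition $F_t(z) = F(z) + t$ gives equality $F_t(z) = F(z) + t$, so the inequality holds trivially. For the inductive step, assume $F_t^n(z) \geq F^n(z) + t$. Then
\begin{equation*}
F_t^{n+1}(z) = F_t\bigl(F_t^n(z)\bigr) = F\bigl(F_t^n(z)\bigr) + t.
\end{equation*}
Since $F$ is increasing and $F_t^n(z) \geq F^n(z) + t \geq F^n(z)$ (because $t > 0$), monotonicity yields
\begin{equation*}
F\bigl(F_t^n(z)\bigr) \geq F\bigl(F^n(z)\bigr) = F^{n+1}(z),
\end{equation*}
hence $F_t^{n+1}(z) \geq F^{n+1}(z) + t$, completing the induction.

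There is essentially no obstacle here; the only point worth flagging is that the argument genuinely relies on $F$ being increasing, which is guaranteed because Theorem \ref{crit-1} has ruled out turning points and $f$ is assumed orientation preserving. The extra boost of $+t$ that appears at every iterate comes from the outermost application of $F_t$, while the inductive hypothesis is only used to ensure that we are evaluating $F$ at a point at least as large as $F^n(z)$.
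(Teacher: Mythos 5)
Your proof is correct and follows essentially the same route as the paper: a direct induction on $n$, with the base case being the definition of $F_t$ and the inductive step using the monotonicity of $F$ together with the extra $+t$ supplied by the outermost application of $F_t$. The only (immaterial) difference is that you use the inductive hypothesis merely to conclude $F_t^n(z)\geq F^n(z)$, whereas the paper passes through the intermediate bound $F(F_t^n(z))\geq F(F^n(z)+t)$.
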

\begin{proof} By induction: the case $n=1$ is trivial, and assuming $F_t^n(z)\geq F^n(z)+t$, we have $F_t^{n+1}(z) = F(F_t^n(z))+t \geq F(F^n(z)+t)+t \geq F^{n+1}(z)+t$ due to the monotonicity of $f$. 
\end{proof}

\begin{claim}\label{claim:rob3} For each $\epsilon>0$ and $x\in \mathbb{S}^1$ there is $n\in \N$ and $0< t < \epsilon$ such that $f_t^n(x)=x$. 
\end{claim}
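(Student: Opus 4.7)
The plan is to use Claim \ref{claim:rob1} to produce a preimage $y$ of $x$ lying just to the right of $x$, lift the data to $\R$, and then apply the intermediate value theorem to the continuous family $t\mapsto F_t^n(\hat x)$; the monotonicity of Claim \ref{claim:rob2} will guarantee that such a $t^*$ exists and also provide a first-order bound on its size.

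Concretely, I would pick a small $\delta>0$ and apply Claim \ref{claim:rob1} to the open interval $I=(x,x+\delta)$ to obtain $n\in\N$ and $y\in I$ with $f^n(y)=x$. Lifting, write $\hat y=\hat x+s$ with $0<s<\delta$, and let $k\in\Z$ be such that $F^n(\hat y)=\hat x+k$. Since $F^n$ is strictly increasing, $F^n(\hat x)<\hat x+k$. Define $G(t)=F_t^n(\hat x)$; by Claim \ref{claim:rob2}, $G(t)\geq G(0)+t$, so $G$ is strictly increasing and tends to $+\infty$, and the intermediate value theorem produces a unique $t^*>0$ with $G(t^*)=\hat x+k$, equivalently $f_{t^*}^n(x)=x$. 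The same linear estimate gives $t^*\leq G(t^*)-G(0)=F^n(\hat y)-F^n(\hat x)$; if $\hat y$ is chosen as the smallest preimage of an element of $\hat x+\Z$ lying strictly above $\hat x$, this quantity equals $1-\{F^n(\hat x)-\hat x\}$.

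The hard part will be forcing $t^*<\epsilon$ through an appropriate choice of $n$. If the $f$-orbit of $x$ is dense in $\mathbb{S}^1$, the fractional parts $\{F^n(\hat x)-\hat x\}$ are dense in $[0,1)$ and in particular get arbitrarily close to $1$, which via the bound above already yields the desired estimate. The remaining case is that $x$ is eventually periodic, where the fractional part takes only finitely many values; here I would exploit that transitivity forbids attracting periodic orbits, so every periodic multiplier is at least $1$, and I would show that $u_n(\hat x):=\partial_t F_t^n(\hat x)|_{t=0}\to\infty$ as $n$ varies along multiples of the eventual period. Combined with continuity of $u_n$ in $t$, this should upgrade the Claim \ref{claim:rob2} bound to $G(\epsilon)-G(0)\geq 1$ for $n$ large, so that $G$ picks up an integer value on $(0,\epsilon)$ and $t^*<\epsilon$. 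The delicate point will be turning the first-order growth estimate on $G$ near $t=0$ into a genuine uniform lower bound valid on all of $[0,\epsilon]$, which requires some care with the $C^2$ regularity of $f$ and the location of the orbit of $x$.
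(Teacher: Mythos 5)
There is a genuine gap, and it sits exactly where you flag it: forcing $t^*<\epsilon$. Your skeleton (lift to $\R$, monotonicity from Claim \ref{claim:rob2}, intermediate value theorem in $t$) is the right one, but placing the interval at $I=(x,x+\delta)$ puts the slack at time $0$, and the expansion of $F^n$ destroys it: your own bound $t^*\leq F^n(\hat y)-F^n(\hat x)=1-\{F^n(\hat x)-\hat x\}$ is of order $1$ for a general point $x$. The proposed repair does not close this. First, the dichotomy ``dense forward orbit or eventually periodic'' is not exhaustive: transitivity provides \emph{one} point with dense orbit, not density of every orbit, and for a map conjugate to $E_d$ most points have forward orbits that are neither dense nor eventually periodic (e.g.\ points whose itinerary lies in a proper subshift), so for such $x$ the fractional parts $\{F^n(\hat x)-\hat x\}$ need never approach $1$. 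Second, even in the eventually periodic branch, knowing $\partial_t F_t^n(\hat x)|_{t=0}\to\infty$ gives no uniform lower bound for $G$ on all of $[0,\epsilon]$: for $t$ bounded away from $0$ the orbit of $\hat x$ under $F_t$ no longer tracks the periodic cycle, so the derivative can collapse, and the ``delicate point'' you defer is in fact the whole difficulty.

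The fix is a small but essential change of where the interval sits. Apply Claim \ref{claim:rob1} to $I=\pi\bigl((F(\hat x),F(\hat x)+\epsilon)\bigr)$, i.e.\ look for a preimage of $x$ near $f(x)$ rather than near $x$. The resulting lift $\hat y$ satisfies $\hat y=F(\hat x)+s=F_s(\hat x)$ with $0<s<\epsilon$ \emph{by construction}, and $F^n(\hat y)=\hat x+m$ for some $m$. Then Claim \ref{claim:rob2} gives $F_s^{n+1}(\hat x)=F_s^n(\hat y)\geq F^n(\hat y)+s>\hat x+m$, while monotonicity of $F^n$ gives $F_0^{n+1}(\hat x)=F^n(F(\hat x))<F^n(\hat y)=\hat x+m$; the intermediate value theorem applied to $t\mapsto F_t^{n+1}(\hat x)$ now produces $t\in(0,s)\subset(0,\epsilon)$ with $f_t^{n+1}(x)=x$. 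No case analysis on the orbit of $x$ and no second-order estimates are needed.
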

\begin{proof} 
Let $\til{x}\in \R$ be such that $\pi(\til{x})=x$, and define $\til{I}=(F(\til{x}), F(\til{x})+\epsilon)$. Claim \ref{claim:rob1} implies that there is $y\in I=\pi(\til{I})$ and $n\in \N$ such that $f^n(y) = x$. This means that if $\til{y}$ is the point in $\til{I}$ such that $\pi(\til{y})=y$, then there is $m\in \Z$ such that $F^n(\til{y}) = \til{x}+m$. Observe that we can write $\til{y}=F(\til{x})+s=F_s(\til{x})$ for some $s$ with $0< s< \epsilon$. By the previous claim, $F_s^{n+1}(\til{x}) = F_s^n(\til{y})\geq F^n(\til{y})+s> \til{x}+m$. On the other hand, since $F(\til{x})<\til{y}$ and $F^n$ is increasing, we have $F_0^{n+1}(\til{x}) = F^n(F(\til{x})) < F^n(\til{y}) = \til{x}+m$. Thus, by continuity, there is $0<t<s$ such that $F_t^{n+1}(\til{x}) =\til{x}+m$, so that $f_t^{n+1}(x) = x$ as required
\end{proof}

\begin{claim} $f$ has no critical points.
\end{claim}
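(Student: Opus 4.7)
The plan is to argue by contradiction: suppose $c$ is a critical point of $f$, meaning $f'(c)=0$. From Theorem \ref{crit-1} (invoked at the start of the proof of Theorem \ref{crit-robust}), we know $f$ has no turning points, so $c$ is a non-turning critical point at which $f$ is still monotone. The goal is to use the perturbations $f_t$ to produce a super-attracting periodic orbit through $c$, contradicting the robust transitivity of $f$.

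Fix $\epsilon>0$ small enough that $f_t$ is transitive for every $t\in(-\epsilon,\epsilon)$; such an $\epsilon$ exists because $f_t\to f$ in the $C^r$ topology as $t\to 0$, and $f$ is $C^r$-robustly transitive. Apply Claim \ref{claim:rob3} to the point $x=c$ with this $\epsilon$: there exist $n\in\N$ and $0<t<\epsilon$ such that $f_t^n(c)=c$, so $c$ is a periodic point of $f_t$. Since $f_t$ and $f$ differ by an additive constant, they have identical derivatives, so
\[
(f_t^n)'(c) \;=\; \prod_{i=0}^{n-1} f_t'(f_t^i(c)) \;=\; \prod_{i=0}^{n-1} f'(f_t^i(c)).
\]
The factor corresponding to $i=0$ is $f'(c)=0$, hence $(f_t^n)'(c)=0$.

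Thus $c$ is a super-attracting periodic point of $f_t$: the vanishing of the derivative of $f_t^n$ at its fixed point $c$ gives a neighborhood $U$ of $c$ with $f_t^n(\ol U)\subset U$ and $\bigcap_{j\ge 0} f_t^{jn}(U)$ equal to the orbit of $c$. The union $B$ of the forward $f_t$-images of $U$ is then a nonempty open forward-invariant set whose forward orbits all accumulate on the finite orbit $\{c,f_t(c),\dots,f_t^{n-1}(c)\}$. This contradicts the transitivity of $f_t$, since any point with dense $f_t$-orbit would eventually enter $B$ and hence have $\omega$-limit contained in that finite orbit, which is not dense in $\mathbb{S}^1$. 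The main (mild) technical point is simply verifying that the super-attracting basin argument genuinely obstructs transitivity; once this contradiction is in hand, no critical point $c$ can exist, completing the proof.

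Given that $f$ now has no critical points at all, $f$ is a $C^2$ local diffeomorphism of the circle, and combined with transitivity and the non-invertibility implicit in the setup (together with Theorem \ref{th:localhomeo}) this yields that $f$ is an expanding endomorphism in the sense of Definition \ref{expanding}, finishing the proof of Theorem \ref{crit-robust}.
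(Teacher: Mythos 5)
Your proposal is correct and follows essentially the same route as the paper: apply Claim \ref{claim:rob3} at the critical point $c$ to make it periodic for some nearby $f_t$, observe that $f_t'=f'$ so $(f_t^n)'(c)=0$, and conclude that $c$ is an attracting (indeed super-attracting) periodic point of $f_t$, contradicting the transitivity of $f_t$. The extra details you supply (the explicit chain-rule computation and the basin-of-attraction argument against transitivity) are just elaborations of the paper's one-line justification.
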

\begin{proof} Suppose $c$ is a critical point. Claim \ref{claim:rob3} implies that there exist arbitrarily small choices of $t>0$ such that $f_t^n(c)=c$ for some $n$. But $c$ is also a critical point for $f_t^n$, and so it is an attracting fixed point for $f_t^n$, contradicting the transitivity of $f_t$.
\end{proof}	

Theorem A in \cite{M} implies that if $f$ is a transitive $C^2$ endomorphism without critical points, then one of the following hold:
\begin{enumerate}
\item[(1)] $f$ is topologically conjugate to a rotation;
\item[(2)] $f$ has a non-hyperbolic periodic point;
\item[(3)] $f$ is an expanding map.
\end{enumerate}

We can rule out case (1), since $f$ is not a homeomorphism. In fact, if $f$ is a homeomorphism, Claim \ref{claim:rob3} implies that there exist arbitrarily small values of $t$ such that $f_t$ has periodic points, and being $f_t$ a homeomorphism, it follows that $f_t$ is non-transitive, a contradiction.

To finish the proof of the theorem, we have to rule out case (2) above, \ie we need to show that all periodic points of $f$ are hyperbolic. Suppose $p$ is a non-hyperbolic fixed point of $f$, and let $k\in \N$ be the least period, so that $f^k(p)=p$ and $(f^k)'(p)=1$ (because $f^k$ is increasing). Let $I  = (p-\epsilon, p+\epsilon)$ with $\epsilon$ so small that $f^i(p)\notin I$ for $1<i<k$, and choose a $C^\infty$ map $h\colon \mathbb{S}^1\to \mathbb{S}^1$ which is $C^r$-close to the identity, such that $h(x) = x$ if $x\notin I$, $h(p)=p$ and $0<h'(p)<1$. Then $hf$ is $C^r$-close to $f$, and in particular it is transitive. But $0<((hf)^k)'(p)<1$, so $p$ is a periodic sink for $hf$, contradicting the transitivity. This proves that all periodic points of $f$ are hyperbolic, completing the proof.
\qed

% !TEX root = shadow-main.tex
\section*{Appendix: Creation of crooked horseshoes near the identity}
\label{sec:crooked}

\begin{figure}[ht!]
\begin{center}
\includegraphics[height=7cm]{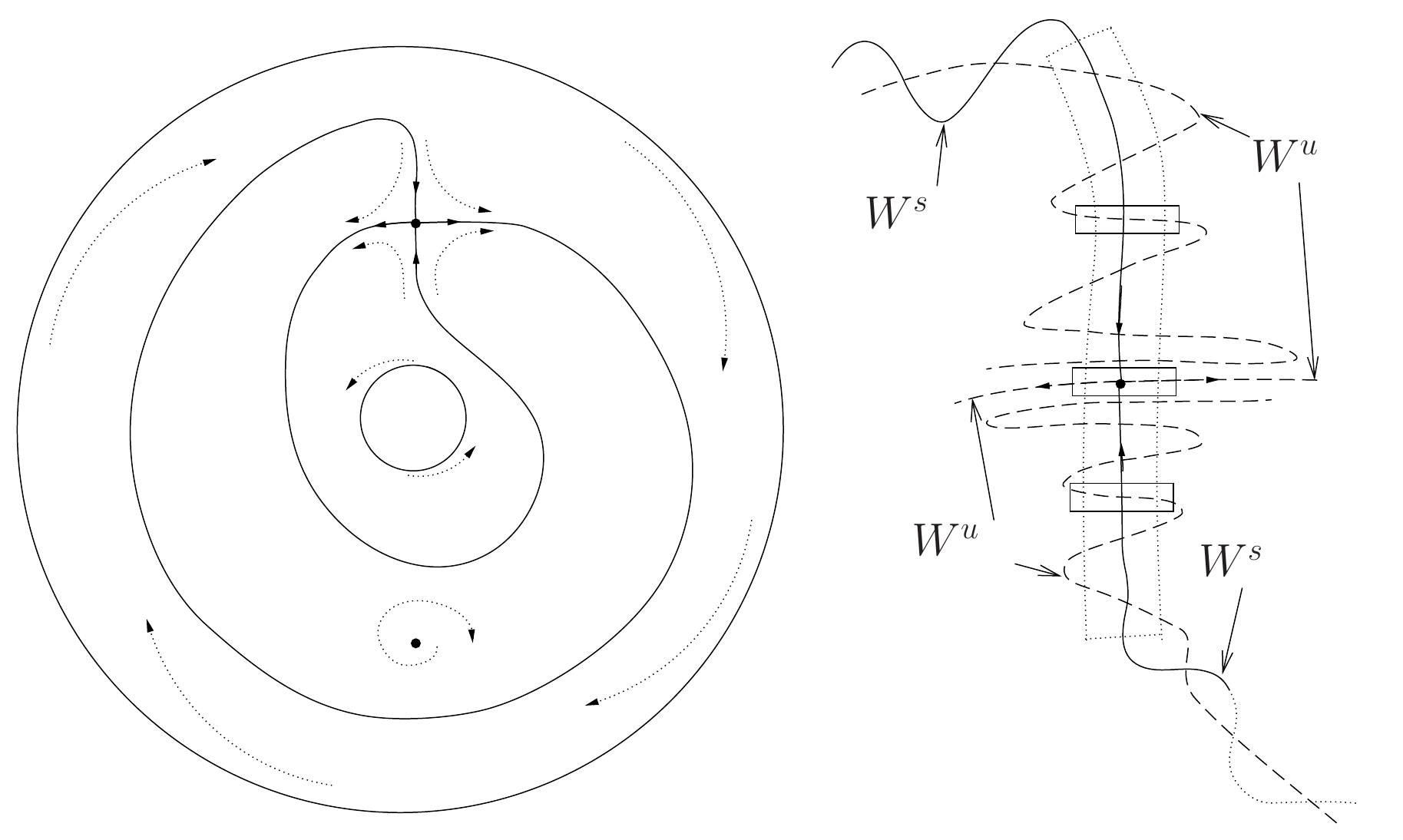}
\caption{Creating a crooked horseshoe close to the identity from a flow}
\label{fig:flow}
\end{center}
\end{figure}

Given $m>0$, we will construct a $C^r$ diffeomorphism satisfying the properties required in Proposition \ref{pro:flow}.
For that, we start with a vector field $X$ exhibiting a double loop between the stable and unstable manifold of a hyperbolic singularity, as in figure \ref{fig:flow} (see details in subsection \ref{initial}). Moreover, the loop is an attracting set. Then, for each $m$, we take the time-$\frac{1}{m}$ map $\Phi_{\frac{1}{m}}$ of the flow associated to $X$. The fact that this map is $\frac{C_r}{m}$-close in the $C^r-$topology to the identity map where $C_r$ is a positive constant independent of $m$ follows from Lemma \ref{lem:tvm} below. Then, the map is perturbed into a $C^r$ diffeomorphism $f$ breaking an  homoclinic connection between the stable and unstable manifold of the fixed point associated to each loop (see figure \ref{fig:flow}). It is proved that this diffeomorphism is Axiom A with strong transversality. To prove that, we adapt to the present context the strategy developed in \cite{newhouse-palis}. Note that the estimate on the distance to the identity is preserved if the perturbation is $C^r$-small. The resulting map has the properties mentioned in Proposition \ref{pro:flow} (one can verify the presence of a crooked horseshoe for $f^m$ using standard arguments). We devote the rest of this section to obtaining the required perturbations.

We state an elementary fact that was used in the previous description.
\begin{lemma}\label{lem:tvm} Let $X$ be a $C^{r+1}$ vector field on the compact manifold $M$, and $\phi\colon M\times [0,1]\to M$ the associated flow. Then $\Phi_t = \Phi(\cdot, t)$ is such that $d_{C^r}(\Phi_t, \id) < C_r t$ for some constant $C_r$ independent of $t$.
\end{lemma}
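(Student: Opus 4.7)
The plan is to exploit the ODE that defines $\Phi_t$ and integrate the variational equations order by order. Since $M$ is compact and $X$ is $C^{r+1}$, all derivatives of $X$ up to order $r+1$ are uniformly bounded on $M$, and likewise (by working in a fixed finite atlas with a subordinate partition of unity) we may reduce the estimate of $d_{C^r}(\Phi_t,\id)$ to estimating the coordinate functions and their partial derivatives up to order $r$ on each chart of a fixed cover. Throughout the argument, constants absorbed into the final $C_r$ may depend on $X$, on the chart system, and on $r$, but not on $t\in[0,1]$.

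First I would handle the $C^0$ estimate: from $\partial_s \Phi_s(x) = X(\Phi_s(x))$ and $\Phi_0=\id$, one writes
\[
\Phi_t(x) - x \;=\; \int_0^t X(\Phi_s(x))\,ds,
\]
and hence $\|\Phi_t(x)-x\|_{C^0} \le t\,\|X\|_{C^0}$. Next, the Jacobian $D\Phi_t$ satisfies the linear variational equation $\partial_t D\Phi_t = DX(\Phi_t)\,D\Phi_t$ with $D\Phi_0 = \mathrm{Id}$; Gr\"onwall gives $\sup_{s\in[0,1]}\|D\Phi_s\|_{C^0} \le e^{\|DX\|_{C^0}}$, so integrating the variational equation from $0$ to $t$ yields
\[
\|D\Phi_t - \mathrm{Id}\|_{C^0} \;\le\; t\,\|DX\|_{C^0}\cdot e^{\|DX\|_{C^0}},
\]
which is the desired $C^1$ estimate.

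For $2\le k\le r$, the key observation is that $D^k\Phi_0 \equiv 0$. By applying $\partial_t$ and $k$ spatial derivatives and invoking Fa\`a di Bruno's formula, $D^k\Phi_t$ satisfies an ODE of the form
\[
\partial_t D^k\Phi_t \;=\; DX(\Phi_t)\cdot D^k\Phi_t \;+\; P_k\!\left(DX,\dots,D^k X,\; D\Phi_t,\dots,D^{k-1}\Phi_t\right),
\]
where $P_k$ is a universal polynomial expression. The derivatives $D^j\Phi_s$ with $j<k$ are, by induction on $k$, uniformly bounded on $M\times[0,1]$ in terms of $\|X\|_{C^{k}}$; hence the inhomogeneous term is uniformly bounded. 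Using $D^k\Phi_0=0$ and integrating, another application of Gr\"onwall to the linear part gives a uniform bound $\|D^k\Phi_s\|_{C^0}\le M_k$ on $[0,1]$, and then a second integration gives
\[
\|D^k\Phi_t\|_{C^0} \;\le\; t\cdot C_k'
\]
with $C_k'$ depending only on $\|X\|_{C^{k}}$ and on the previously obtained constants. Combining the estimates for $k=0,1,\dots,r$ and taking $C_r = \max_{0\le k\le r} C_k'$ yields $d_{C^r}(\Phi_t,\id) < C_r\,t$, as required.

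The main obstacle is purely bookkeeping: making precise the inductive claim that each $D^k\Phi_s$ is uniformly bounded on $M\times[0,1]$ and then extracting the factor of $t$ from $D^k\Phi_0=0$. Both steps are standard ODE facts on a compact manifold once one has fixed an atlas and absorbed the coordinate change constants into the $C_r$; no new ideas are needed beyond Gr\"onwall and Fa\`a di Bruno.
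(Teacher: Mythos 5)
Your proof is correct and follows essentially the same route as the paper's (one-paragraph) sketch: both extract the factor of $t$ by integrating $\tfrac{d}{ds}D^k_x\Phi_s = D^k_x(X\circ\Phi_s)$ from $0$ to $t$, using that $\Phi_0=\id$ so that $D^k\Phi_0$ is $\mathrm{Id}$ or $0$. In fact your version is slightly more careful, since you make explicit the Fa\`a di Bruno/Gr\"onwall bookkeeping needed to bound $D^k_x(X\circ\Phi_s)$ by the lower-order derivatives of $\Phi_s$, a step the paper's sketch compresses into the inequality $\norm{D^k_x\tfrac{d}{ds}\Phi_s}\leq\norm{D^k_xX}$.
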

\begin{proof}[Sketch of the proof] We prove it locally; the global version is obtained by standard arguments.
Assume the flow is defined on a neighborhood of $\ol{U}$ for some bounded open set $U\subset \R^n$. Let $$C_r=\max_{0\leq k\leq r}  \norm{D_x^k X}$$
were $D^k_xX(x) \colon (\R^n)^k\to \R^n$ is the $k$-th derivative of $X$ and $\norm{D_x^kX}$ is the supreme of $\norm{D_x^kX(x)}$ for $x$ in a neighborhood of $\ol{U}$.

By the mean value inequality, if $t$ is small enough,
\begin{align*}
\norm{D_x^k\Phi_t-D_x^k\id} = \norm{D_x^k\Phi_t - D_x^k\Phi_0} \leq \sup_{0\leq s\leq t} \norm{\frac{d}{ds}D_x^k\Phi_s} \\= \sup_{0\leq s\leq t} \norm{D_x^k\frac{d}{ds} \Phi_s} \leq \norm{D_x^k X}t \leq C_r t.
\end{align*}
\end{proof}

This  section is organized as follows: first we introduce the vector field $X$; later, we consider perturbations of the map $\Phi_\frac{1}{m}$, by embedding the map in a one-parameter family; finally, we prove that for certain parameters the map is Axiom A.

To simplify the proof, we will assume that the double connection of our flow is as in figure \ref{vector-field} on the sphere, which does not make a difference since after compactifying by collapsing boundary components of the annulus in figure \ref{fig:flow} to fixed repelling points, we are in the same setting (in figure \ref{vector-field}, the sources $R_1$ and $R_3$ correspond to the collapsed boundary components, while $R_2$ is the source inside the loop of figure $\ref{fig:flow}$), and the perturbations that we are going to use are supported in the complement of a neighborhood of $R_1$ and $R_2$.

\subsection{The vector field $X$} \label{initial} 

\begin{figure}[ht]
\begin{center}
    \includegraphics[height=7cm]{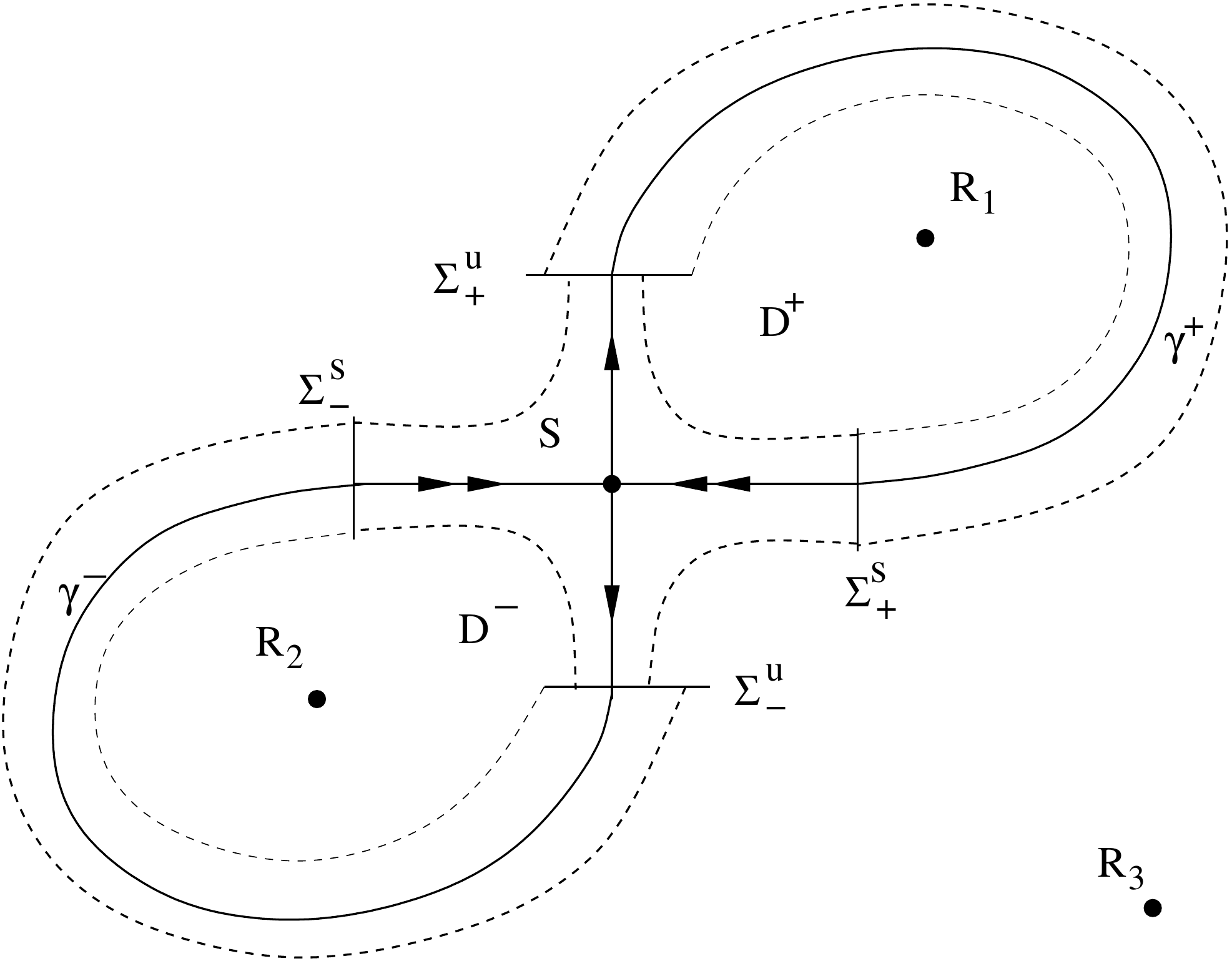}
\end{center}
\caption{Vector field $X$}
\label{vector-field}
\end{figure}

Let us consider a vector field $X$ defined on the two dimensional 
sphere such that (after composing with the stereographic projectiion)  in the disk $D=[-2,2]\times [-2,2]$ the following holds:
\begin{enumerate}
\item It is symmetric respect to $(0,0)$, i.e. $X(-p)=-X(p).$
\item $S=(0,0)$ is a hyperbolic saddle singularity such that:
\begin{enumerate}
\item inside $[-\frac{1}{2}, \frac{1}{2}]\times [-\frac{1}{2}, \frac{1}{2}]$ the vector field is the linear one given by $X(x,y)=(\log(\la) x, \log(\s) y)$ with $0<\la<1<\s$, $\la\s<1$, $\la=\s^{-\ga},$ with $ \ga >\max\{3r, 6\}$  where $r$ is the smoothness required; in particular,  $[-\frac{1}{2}, \frac{1}{2}]\times \{0\}$ is contained  in the  local  stable manifold of $S$ and $\{0\}\times [-\frac{1}{2}, \frac{1}{2}]$ is contained  in the  local  unstable manifold of $S.$
\item the stable and the unstable  manifold of $(0,0)$ are contained in $D$;
\item the stable manifold and the unstable one form a loop $\ga$ contained in $[0,2]\times [0,2]\cup [-2,0]\times [-2,0]$;  let us denote by $\ga^+$ the one in  $[0,2]\times [0,2]$ and $\ga^-$ the one in  $[-2,0]\times [-2,0].$
\end{enumerate}

\item $R_1=(1,1)$ is a hyperbolic repelling singularity and is contained in the region $D^+$ bounded by the loop $\ga^+$ (by symmetry $R_2=(-1,-1)$ is a hyperbolic repelling singularity and is contained in the region $D^-$ bounded by the loop $\ga^-$).

\item $R_3=(2,-2)$ is a hyperbolic repelling singularity.

\item Let $\Si^u_+$ be the transversal section $[-\frac{1}{3}, \frac{1}{3}]\times \{\frac{1}{2}\}$ and  
 $\Si^s_+$ be the transversal section $\{\frac{1}{3}\}\times [-\frac{1}{3}, \frac{1}{2}]$, and $P^+:\Si^u_+\to \Si^s_+$ be the induced  map by the flow, it is assumed that $P^+(x)=x.$ By symmetry there is also an induced  map $P^-$ defined from $\Si^u_-=[-\frac{1}{3}, \frac{1}{3}]\times \{-\frac{1}{2}\}$ to 
 $\Si^s_-=\{-\frac{1}{2}\}\times [-\frac{1}{3}, \frac{1}{2}]$ and $P^-(x)=x$. 

\item The only singularities in $D$ are $S, R_1, R_2, R_3.$ 

\end{enumerate}

\begin{remark}\label{preserve}  From the choice of the eigenvalues of the singularity, observe that the induced map by the flow $L^+$ from  $\Si^s_+\setminus\{y=0\}$ to $\Si^u_+\cup \Si^u_-$ is a strong contraction, provided that $\Si^s_+$ is a small neighborhood of $\Si^s_+\cap\{y=0\}$. In the same way it follows that  the induced map  by the flow $L^-$ from  $\Si^-_+\setminus\{y=0\}$ to $\Si^u_+\cup \Si^u_-$ is also a contraction. Therefore, the return map from $\Si^s_+\cup \Si^s_-\setminus\{y=0\}$ to itself is a contraction. From that, it follows that the loop $\ga^+\cup \ga^-$ is an attracting loop.

\end{remark}

\begin{lemma} With the assumptions above, the vector field $X$ can be built in such a way that, 
\begin{enumerate}
 \item the repelling basin of $R_1$ is given by $D^+$ and the repelling basin of $R_2$ is   $D^-;$
\item the repelling basin  of $R_3$ in $D$  is the complement of  $D^+\cup D^-\cup \ga^+\cup \ga^-;$
\item the non-wandering set of $X$  is $S,R_1, R_2, \ga^+,\ga^-, R_3.$ 
\end{enumerate}

\end{lemma}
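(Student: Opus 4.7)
The plan is to establish the three statements in one sweep using the Poincar\'e--Bendixson theorem on the sphere, after first arranging the construction of $X$ so that no periodic orbit is present.

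The first observation is that $\gamma^+$ and $\gamma^-$ each consist of separatrices of $S$ and are therefore invariant. Hence the three open regions $D^+$, $D^-$, and $V:=S^2\sm(\overline{D^+}\cup\overline{D^-})$ (which contains $R_3$) are each forward and backward invariant. Since the figure--$8$ $\overline{\gamma^+}\cup\overline{\gamma^-}$ is a connected graph with one vertex and two edges on the sphere, the complement has three disk components, so each of $D^+$, $D^-$, $V$ is a topological disk.

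The crucial step is to build $X$ so that none of $D^+$, $D^-$, $V$ contains a periodic orbit. I would do this by choosing a Morse function $\varphi\colon S^2\to\R$ whose critical points are exactly $S,R_1,R_2,R_3$ with the correct indices (saddle, maximum, maximum, maximum), prescribing $X$ to agree with the required linear forms in small neighborhoods of these singularities, and setting $X$ equal to a positive multiple of $-\nabla\varphi$ outside those neighborhoods, with a smooth interpolation in between. The eigenvalue condition $\lambda\sigma<1$ and the prescriptions $P^\pm(x)=x$ are consistent with orienting the separatrices as in the figure and with the direction of decrease of $\varphi$. Because $X$ is strictly transverse to the level sets of $\varphi$ outside the critical set, any periodic orbit of $X$ would be a closed curve along which $\varphi$ strictly decreased, which is impossible.

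Once periodic orbits are ruled out, Poincar\'e--Bendixson applies. For any $p\in D^+$, both $\omega(p)$ and $\alpha(p)$ are compact, connected, invariant subsets of $\overline{D^+}$ made of fixed points and connecting orbits. The only fixed points in $\overline{D^+}$ are $R_1$ and $S$; $R_1$ is excluded as an $\omega$-limit because it is a source, and $\omega(p)=\{S\}$ is excluded because the local stable manifold of $S$ meets $D^+$ only along $\gamma^+$, hence $\omega(p)=\gamma^+$. Since $\gamma^+$ is attracting by the Remark, it cannot be an $\alpha$-limit, leaving $\alpha(p)=R_1$. This proves (1) for $R_1$, and the case of $R_2$ is symmetric. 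The same analysis in $V$ shows that every $p\in V$ satisfies $\alpha(p)=R_3$ and $\omega(p)\subseteq\gamma^+\cup\gamma^-$, yielding (2). Statement (3) is then immediate, since every point of $D^+\cup D^-\cup V$ has disjoint $\alpha$- and $\omega$-limits and is therefore wandering.

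The main obstacle will be the construction in the second step: one must interface the prescribed local linear models near each singularity with a global gradient-like field in such a way as to preserve both the identifications $P^\pm(x)=x$ on the cross-sections and the strict transversality of $X$ to the level sets of $\varphi$. Once this matching is accomplished, the rest of the proof is a routine application of Poincar\'e--Bendixson.
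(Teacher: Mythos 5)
Your Poincar\'e--Bendixson framework in the third paragraph is sound, and the identification of the three invariant disk components $D^+$, $D^-$, $V$ is correct. But the central construction step fails: the vector field $X$ you are asked to build cannot be gradient-like. Assumption 2(c) requires the stable and unstable manifolds of $S$ to coincide along the loops $\ga^+$ and $\ga^-$; if $X$ were strictly transverse to the level sets of a function $\varphi$ away from the critical set, then $\varphi$ would strictly decrease along the homoclinic orbit $\ga^+\sm\{S\}$ from the value $\varphi(S)$ (as $\al$-limit) back to the value $\varphi(S)$ (as $\omega$-limit), which is impossible. The same monotonicity argument you invoke to kill periodic orbits also kills the saddle connections that are part of the standing hypotheses. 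There is also a purely topological obstruction: a Morse function on $S^2$ must have at least one minimum ($c_0\geq b_0=1$), so no Morse function has critical points consisting only of one saddle and three maxima; correspondingly, $-\nabla\varphi$ would be forced to have a sink, whereas the prescribed $X$ has none --- its attractor is the loop itself, and a homoclinic loop can never be an attractor of a gradient flow.

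The paper's proof goes a different way and is worth contrasting. The attracting character of $\ga^+\cup\ga^-$ is not imposed by a Lyapunov function but deduced from the dissipativity of the saddle: since $\la\s<1$, the Poincar\'e return map on the cross-sections $\Si^s_\pm$ is a contraction (Remark 4.2 of the paper), so one can choose a simple closed curve $\be^+\subset D^+$ close to $\ga^+$ such that the annulus $T^+$ between them satisfies $\Phi^X_t(T^+)\subset T^+$ for large $t$, and similarly a backward-invariant annulus on the outside of the figure eight. The field is then \emph{built} so that every orbit in $D^+\sm\{R_1\}$ eventually enters $T^+$, which forces the basin statements; item (3) follows from (1) and (2). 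If you want to salvage your approach, you would need to replace the gradient construction by an argument that rules out periodic orbits compatible with the existence of the loops --- for instance, using the contraction of the return map near $\ga^\pm$ together with an explicit radial structure in the rest of each disk --- at which point your Poincar\'e--Bendixson analysis does finish the proof.
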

\begin{proof} Let  $\be^+$ be a  simple closed curve  inside  $D^+$ and close to $\ga^+$ and let $T^+$ be the  annulus bounded by $\ga^+$ and $\be^+$.  By the property on the return map $R$ inside $D^+$ and that the saddle $S$ is dissipative  (see Remark \ref{preserve}), it follows that $\Phi_t^X(T^+)\subset T^+$ for large  $t>0$. This allows to build $X$ in such a way that the first item holds.

With a similar argument, observe that if it is taken any  closed curve  $\al$ outside   $D^+\cup D^-$ and close to $\ga^+\cup \ga^-$, and $T$ is an annulus bounded by $\ga^+\cup \ga^-$ and $\al$, by the property on the return map and that the saddle $S$ is also dissipative, it follows that $T\subset \Phi_t^X(T)$ for large $t>0$. This allows to build $X$ in such a way that the second item holds.

The last item in the thesis of the lemma is a consequence of the first and second item.

\end{proof}

\subsection{The flow  $\Phi_\frac{1}{m}$} Now, given $m$, we take $f=\Phi_{\frac{1}{m}}$, the $\frac{1}{m}-$time  map of the flow associated to $X.$ For the sake of simplicity, we can assume that there exist $b>0$ with 
$(0,b)\in W^u_{loc}(S)$,  $a>0$ with  $(a, 0)\in W^s_{loc}(S)$, and $k_m>0$ such that $f^{k_m}(0,b)=(a,0)$. The iterate $k_m$ depends on $m$ (and in fact $k_m\to +\infty$) but from now on, for simplicity, we assume that $k_m$ is equal to $2$.  Let $L^u_+=[(0,b),f(0,b)]$ be a  fundamental domain  inside the local unstable manifold of $S$ and  $L^s_+=[f(a,0),(a,0)]$ be a  fundamental domain inside the local stable manifold of $S$.  Let us take  $B^u_+=[-\e,\e]\times L^u_+$ and  $B^s_+=L^s_+ \times[-\e,\e]$ with $\e$ small, and observe that  reparameterizing the time flow,    we can assume that  $$f^2(x,y)= (f^u(y), x),$$  where $f^2=f\circ f$,  $f^u:L^u_+\to L^s_+$ is a one-dimensional diffeomorphism such  that 
$f^u(b)=a,$ and ${f^u}' <c<0.$
By symmetry, the same holds in the neighborhood $B^u_-=L^u_-\times [-\e, \e]=-L^u_+\times [-\e, \e]$, $B^s_-=-L^s_+\times [-\e,\e]$ and in particular,  
$f^2(0,-b)=(-a,0)$. Of course, the fundamental domains chosen depend on $m$, more precisely, as $m$ is larger, the fundamental domains gets smaller (recall that $\Phi_{\frac{1}{m}}$ converge to the identity map).

\subsection{Perturbations of $\Phi_\frac{1}{m}$}\label{pert initial}  First we embed the map $f=\Phi_\frac{1}{m}$ in a one-parameter family $\{f_{t}\}_{ t\geq 0}$ where $f_{0}=f$.  Now for each $ t>0$ small, we get a diffeomorphism  $f_{t}$ $C^r$  close to $f$. 
Moreover, we can get $f_{t}$ satisfying  the following properties (details about the construction of $f_{t}$ are in subsection \ref{about}):

\begin{enumerate}
 \item  The map $f_t$ is symmetric respect to $(0,0)$, i.e. $f_t(-p)=-f_t(p).$

 \item If $t$ is small, $S, R_1, R_2$, and $R_3$ are hyperbolic fixed points.

\item For any $t$, the dynamics in $[-\frac{1}{2}, \frac{1}{2}]\times [-\frac{1}{2}, \frac{1}{2}]$ is given by $f_t(x,y)=(\la x, \s y)$, where $\la:=\la^{\frac{1}{m}}, \s:=\s^{\frac{1}{m}}.$ Observe that it is verified that $\la=\s^{-\ga}$, the saddle fixed point $S=(0,0)$ is dissipative, and the local unstable manifold is contained in the $y$-axis and the local stable in the $x$-axis.

\item Provided the neighborhoods   $B^u_+=[-\e,\e]\times L^u_+$ and $B^s_+=L^s_+ \times[-\e,\e]$,  we assume that (see figure \ref{pert})
$$f^2_t(x,y)= (f^u(y), x+f^s_{t}(y))$$ where $f^s_{t}: L^u_+\to \R$ is a $C^r$   function  verifying that
 \begin{enumerate} 
\item $f^s_{0}=0$,

\item there exists a unique point $b''$ with  $b< {b}''< {b}'$ such that $f^s_{ t}({b}')=f^s_t({b}'')=f^s_t(b)=0$ (where ${b}'$ is a point such that $(0, {b}')=f(0, b)$),

\item for any $y\in (b, {b}'')$ follows that $f^s_t(y)>0, {f^s}^{''}_t(y)<0$ and  for any $y\in ({b}'', {b}')$ follows that $f^s_t(y)<0, {f^s}^{''}_t(y)>0,$ 

\item for any $t$ the map $f^s_t$ has only two critical points $c_1\in (b, {b}'')$ and $c_2\in ({b}'', {b}')$ such that $f^s_t(c_1)=t, f^s_t(c_2)=-t(1+\de)$ where $\de+1=\frac{c_2}{c_1}$ and moreover the critical points are not degenerated. In particular, $f^2(0,c_1)= (c_1',t),  f^2(0,c_2)= (c_2',-t(1+\de)).$
\end{enumerate}

\item The function $f_t$ coincide with $f_{0}$ outside a neighborhood of size $t^{\frac{1}{r}}$ of $L^u\cup - L^u$ for $r$ large.  In particular, the map $f_t$ can be built in such a way that is $C^r$ close to $f_{0}$.

\end{enumerate}

\begin{figure}[subsection]
\begin{center}
    \includegraphics[height=3cm, width=12cm]{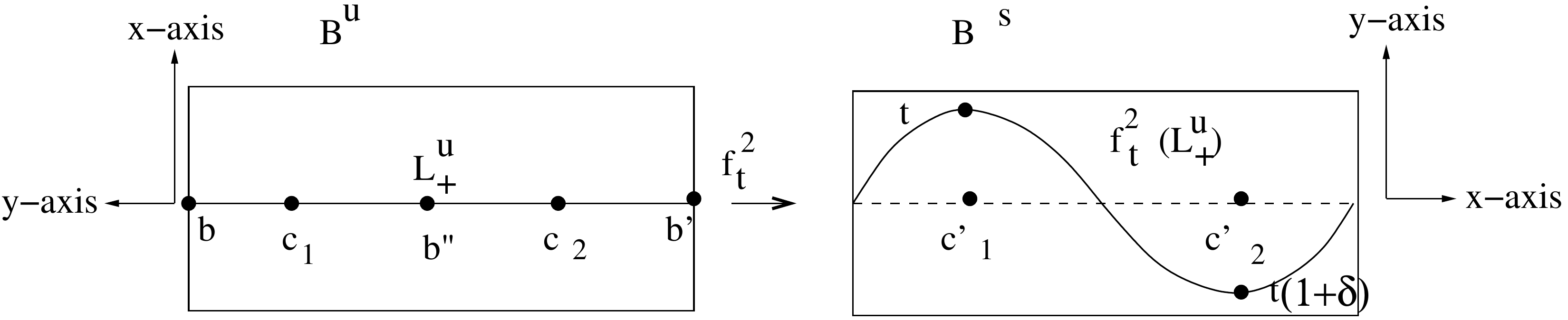}
\end{center}
\caption{Map on fundamental domains.}
\label{pert}
\end{figure}
Observe that by symmetry, provided  the neighborhood   $B^u_-=[-\e,\e]\times -L^u$ and $B^s_-=-L^s \times[-\e,\e]$,  it follows  that 
$$f^2_t(x,y)= (f^u(y), x-f^s_t(-y))$$ where $f^s_t$ is the map defined before and so $-f^s_t\circ -Id:-L^u\to \R$ is a $C^r$  family of functions  verifying symmetric similar properties to the one listed above.

\begin{remark}
Without loss of generality, we can assume that the critical points of $f^s_t$ are quadratic:
\begin{enumerate}
 \item $f^2_t(x,y)=(y, -(y-c_1)^2+t+x)$  nearby the point $(0,c_1)$, 
\item $f^2_t(x,y)=(y, (y-c_2)^2-t(1+\de)+x)$ nearby the point $(0,c_2)$,
\item $f^2_t$ it is defined by symmetry nearby the points $(0, -c_1)$ and $(0, -c_2)$.
\end{enumerate}
\end{remark}

\begin{figure}[subsection]
\begin{center}
   \includegraphics[width=7cm, height=7cm]{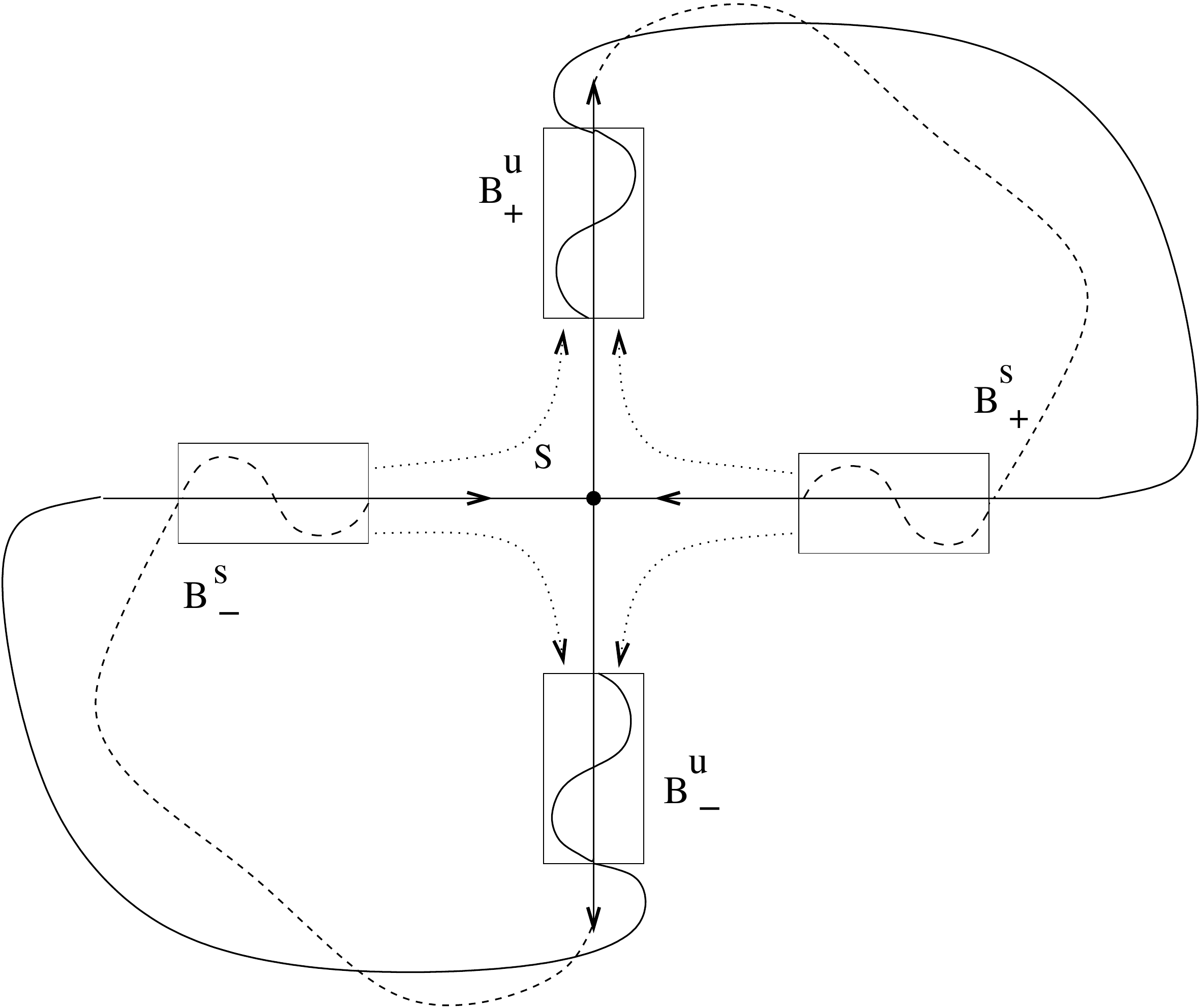}
\end{center}
\caption{$f_t$.}
\label{map}
\end{figure}

\begin{remark} \label{viz} There exists an arbitrary  small neighborhood $W$ of $\ga^+\cup \ga^-$ such that provided $ t$ small then 
 any point $x\in W^c$ belong  either to the basin of repelling of the fixed points  $R_1$ $R_2$ or $R_3.$ Moreover, $f_t(W)\subset W$ for any $t$ small.
\end{remark}

\subsection{Axiom A for certain parameters} 
In \cite{newhouse-palis} it was proved that given a $C^2$ one-parameter family displaying a $\Omega-$explosion, and assuming that before the explosion the nonwandering set is given by hyperbolic periodic orbits and a single tangent  homoclinic orbit, then there exist parameters arbitrarily close to $0$ such that the diffeomorphism is a non trivial Axiom A. These results cannot be applied straightforwardly in our present context; however, the proof can be adapted to conclude the following proposition that allows to prove Proposition \ref{pro:flow}.

\begin{proposition}\label{hyp} For any $n$ sufficiently large, if $t=\s^{-n}c_1$  then  $f_t$ is Axiom A with strong transversality condition. Moreover  $\Omega(f_t)$ is formed by
\begin{enumerate}
\item three hyperbolic attracting periodic orbits $p_+, p_-, q$; $p_+$ contained in $D^+$, $p_-$ in $D^-$ and $q$ in the complement of $D^-\cup D^+;$ 
\item the repelling fixed points $R_1, R_2, R_3$;
\item  a finite number of transitive hyperbolic compact invariant sets contained in the complement of the basin of attraction and repelling of $R_1, R_2, R_3,$ $p_+, p_-, q$   and containing   the homoclinic class  of $S$.
\end{enumerate}
 \end{proposition}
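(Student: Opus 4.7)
The plan is to adapt the Newhouse--Palis $\Omega$-explosion strategy to our loop-breaking family, exploiting the strong dissipation $\lambda=\sigma^{-\gamma}$ with $\gamma>\max\{3r,6\}$ and the arithmetic choice $t=\sigma^{-n}c_1$ to force hyperbolicity of the residual dynamics.

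\textbf{Trapping and repellers.} Since $\gamma^+\cup\gamma^-$ is an attracting loop for the flow of $X$ (Remark \ref{preserve}), Remark \ref{viz} supplies a neighborhood $W$ of the loop with $f_t(W)\subset W$ for all small $t$. Outside $W$, the map $f_t$ is a $C^r$-small perturbation of $\Phi_{1/m}$, whose dynamics is driven by the hyperbolic repellers $R_1,R_2,R_3$. Persistence of hyperbolicity then yields that $R_1,R_2,R_3$ remain repelling fixed points for $f_t$ and that $S^2\setminus W$ is contained in the union of their open basins. Hence $\Omega(f_t)\setminus\{R_1,R_2,R_3\}\subset \bigcap_{k\geq 0}f_t^k(W)$.

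\textbf{Locating the sinks $p_+,p_-,q$.} The image of the unstable fundamental domain under $f_t^2$ is the curve $\Gamma_t=\{(f^u(y),f^s_t(y)):y\in L^u_+\}$, whose two critical points sit at heights $t$ and $-t(1+\delta)$ above and below $W^s_{loc}(S)$. Iterating $(c_1',t)$ under the linearization $f_t(x,y)=(\lambda x,\sigma y)$ gives $f_t^k(c_1',t)=(\lambda^kc_1',\sigma^kt)$ while we stay in the linear region. The condition $t=\sigma^{-n}c_1$ makes the orbit exit the linear region at step $k=n$ at the point $(\lambda^nc_1',c_1)$, which sits at exponentially small horizontal distance $\lambda^nc_1'$ from the critical point $(0,c_1)$. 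Applying $f_t^2$ once more yields $(c_1',\lambda^nc_1'+t)$; since $\lambda\sigma<1$ this self-correction compounds into a uniform contraction, and one checks that $f_t^{n+2}$ maps a small disk around $(0,c_1)$ strictly inside itself with derivative $O(\lambda^nc_1')=O(\sigma^{-n\gamma})$. This provides an attracting periodic orbit $p_+\subset D^+$ capturing the critical orbit through $c_1$. By symmetry the critical value $-t(1+\delta)$ produces $p_-\subset D^-$, and the analogous capture coming from the branch of the image curve that exits the linear region on the far side of $W^s_{loc}(S)$ (i.e.\ the zero of $f^s_t$ at $y=b''$, whose image orbit remains in the complement of $D^+\cup D^-$) produces the third sink $q$.

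\textbf{Hyperbolicity of the residual non-wandering set.} After removing the basins of $p_+,p_-,q,R_1,R_2,R_3$, any remaining orbit in $\Omega(f_t)$ must repeatedly traverse $B^u_\pm\to B^s_\pm$ while avoiding the capture disks around the critical orbits constructed above. On this \emph{avoidance set} the choice $t=\sigma^{-n}c_1$ forces every non-wandering orbit to meet the fundamental domains at definite distance $\gtrsim \lambda^nc_1'$ from the critical points, so the slope of $\Gamma_t$ is uniformly bounded away from horizontal. We then build an $Df_t$-invariant cone field: in the linear box around $S$ cones are preserved by $\operatorname{diag}(\lambda,\sigma)$, and a single passage through $f_t^2(x,y)=(f^u(y),x+f^s_t(y))$ preserves them thanks to the slope bound; the dissipation $\gamma>3r$ guarantees the usual domination estimate needed to glue these local cones into a global hyperbolic splitting. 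The non-wandering set on the avoidance set decomposes into finitely many transitive basic sets; one of them is the homoclinic class $H(S)$, which is non-trivial because $\Gamma_t$ crosses $W^s(S)$ transversally at the non-degenerate zero $b''$ of $f^s_t$, producing a transverse homoclinic point for $f_t^2$. Strong transversality on the heteroclinic intersections between the basic sets follows by propagating this transverse intersection through the invariant cone field with the Inclination Lemma, and transversality with stable/unstable manifolds of sinks and sources is automatic.

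The main obstacle is the cone-field construction in the third step: one must quantify the interaction between the weakly hyperbolic critical returns and the strong $(\lambda,\sigma)$-hyperbolicity near $S$, and show that the arithmetic condition $t=\sigma^{-n}c_1$ rules out tangencies between $\Gamma_t$ and its iterates. This is precisely where the Newhouse--Palis analysis must be ported from the classical tangent-homoclinic setting to our loop-breaking family, and the strong dissipation $\gamma>\max\{3r,6\}$ enters in an essential way.
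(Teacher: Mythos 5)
Your outline reproduces the paper's architecture (trap $W$, repellers outside, sinks capturing critical orbits, cone field on the residue), but it contains one concrete error and one genuine gap. The error is in the accounting of the sinks. In the construction there are two critical points $c_1,c_2$ of $f^s_t$ on $L^u_+$, with values $t$ and $-t(1+\de)$ where $1+\de=c_2/c_1$; the sink $p_+$ captures the orbit of $(c_1',t)$ because $\s^n t=c_1$, the sink $p_-$ is its image under the symmetry $f_t(-p)=-f_t(p)$ (i.e.\ it captures the orbit through $-c_1$), and the sink $q$ captures the orbit of $(c_2',-t(1+\de))$ precisely because $\de$ was tuned so that $\s^n t(1+\de)=c_2$, making the \emph{same} $n$ synchronize both critical returns; this orbit crosses to the symmetric side and back, which is why $q$ lies in $[D^+\cup D^-]^c$ (Lemmas \ref{pozo1}--\ref{pozo3}). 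You instead attribute $p_-$ to the value $-t(1+\de)$ and $q$ to the zero $b''$ of $f^s_t$. The point $(0,b'')$ maps under $f^2_t$ into $W^s_{loc}(S)$, so it is a transverse homoclinic point, not the seed of a sink (you even use it correctly in that role two sentences later). As written, the orbit of the second critical value is never shown to fall into a basin, and an uncaptured critical orbit is exactly what would reintroduce tangencies into $\Omega(f_t)$ and destroy the whole argument.

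The gap is in your third step: you claim the invariant cone field yields "a global hyperbolic splitting" directly. It does not. On the nonwandering set outside the capture disks the slope of $\Ga_t$ is only bounded below by a power of $t$, and the expansion gained along one passage $B^s_\pm\to B^u_\pm\to B^s_\pm$ is not uniformly larger than $1$ near the excluded critical regions; what the cone field gives is only domination of $F$ over $E$ (Lemma \ref{hyplemma}). The paper closes this gap by invoking Theorem B of \cite{pujals-sambarino}: a $C^2$ Kupka--Smale surface diffeomorphism with a dominated splitting on its nonwandering set (and no normally hyperbolic invariant curves there) is Axiom A. Without that input, or a substitute uniform-expansion estimate that you have not supplied, the passage from "invariant cones" to "hyperbolic basic sets" is unjustified. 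Once domination is in place, strong transversality follows because at any heteroclinic point $z$ the tangents to $W^s$ and $W^u$ are forced into the subbundles $E_z$ and $F_z$ respectively; your appeal to the Inclination Lemma is not needed and does not by itself handle all pairs of basic pieces.
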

The strategy to prove Proposition \ref{hyp} consists in the following steps:
\begin{enumerate}
 \item We choose the  parameter $t$ (arbitrary small) such that all   the critical points of $f^s_t$ belong to the basin of attraction of some    attracting  periodic points (see Lemmas \ref{pozo1},  \ref{pozo2} and \ref{pozo3}).
\item The nonwandering set of $f_t$ is contained in the union of the maximal invariat set of $f_t$ in $W$ (i.e., $\cap_{n\in \N}f^n_t(W)$)  and the repelling fixed points $R_1,R_2, R_3$ (it follows from remark \ref{viz}).
\item The maximal invariant  set of $f_t$ in $W$ intersected with  the boxes $B^s_+\cup B^s_-$, is contained in a narrow strip  along the curves $f^2_t(L^u_+\cup L^u_-)$ (see Lemma \ref{nonwander}).
\item It is proved that the part of the nonwandering set of $f_t$ contained in $W$ and in the complement of the attracting periodic points listed in the first item has a dominated splitting (see Lemma \ref{hyplemma}). 
\item Using Theorem B in \cite{pujals-sambarino} it is concluded that $f_t$ is Axiom A and satisfies the strong transversality condition.
\end{enumerate}

To prove item $1$ in the above described strategy,  in the next lemma, we  prove that for  $t=\s^{-n}c_1$ small enough, the critical points of $f^s_t$ belong to the basin of attraction of an  attracting  periodic point. For that value of $t$ it also follows that $(c_2', -t)$ and $(-c_2', t)$ belong  to the basin of attraction of a periodic point in $[D^+\cup D^-]^c$ (see Lemma \ref{pozo3}). 

\begin{lemma}\label{pozo1}  For any $n$ sufficiently large, if $t=\s^{-n}c_1$ then there exists a hyperbolic attracting periodic point $p_+$ contained in $D^+$ such that, for $2<\al<3$, the following hold:
\begin{enumerate}
\item  $B^+_\al(c_1)=\{(x,y): 0\leq x\leq t^{\al},  |y-c_1| < 2^\frac{1}{2} t^\frac{\al}{2} \}$ is contained in the local basin of attraction of $p_+,$

\item   $B^-_\al(c_1')=\{(x,y): |x- c_1'|< 2^\frac{1}{2} t^\frac{\al}{2}, | y-t|< t^{\al}\}$  is contained in the local basin of attraction of $f^2_t(p_+)$ and in particular,  $(c_1', t)$ is in the basin of $p_+$.
\end{enumerate}
 
\end{lemma}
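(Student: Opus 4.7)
The plan is to identify $p_+$ with a fixed point of the first-return map to a neighborhood of the critical point $(0,c_1)$:
\[
F = L^n \circ f_t^2,
\]
where $L(x,y) = (\lambda x, \sigma y)$ is the linear part of $f_t$ in the central box and $f_t^2(x,y) = (y, -(y-c_1)^2 + t + x)$ is the local quadratic form near $(0,c_1)$. A direct composition gives
\[
F(x,y) = \bigl(\lambda^n y,\; \sigma^n(-(y-c_1)^2 + t + x)\bigr),
\]
and the choice $t=\sigma^{-n}c_1$ is designed precisely so that $\sigma^n t = c_1$, making $(0,c_1)$ an approximate fixed point. I would then rewrite the fixed-point system as $x=\lambda^n y$ and $(1-(\sigma\lambda)^n)y = c_1 - \sigma^n(y-c_1)^2$ and, using $(\sigma\lambda)^n\to 0$, apply the implicit function theorem to obtain a unique nearby solution $p_+=(x^*,y^*)$ with $x^*=\lambda^n c_1 + O(\lambda^n(\sigma\lambda)^n)$ and $y^*=c_1+O((\sigma\lambda)^n)$. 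That $p_+$ lies in $D^+$ will follow from the localization of the perturbation $f^s_t$ near $L^u_+$ and the sign $t>0$, which pushes the orbit of $(0,c_1)$ into the $D^+$-side of $\gamma^+$.

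To verify hyperbolicity I would compute
\[
DF(p_+) = \begin{pmatrix} 0 & \lambda^n \\ \sigma^n & -2\sigma^n(y^* - c_1) \end{pmatrix},
\]
so $|\det DF(p_+)|=(\sigma\lambda)^n$ and the trace is $O(\sigma^n\cdot(\sigma\lambda)^n)=O(\sigma^{-n(\gamma-2)})$, both tending to zero because $\lambda=\sigma^{-\gamma}$ with $\gamma>6$. The eigenvalues then have modulus $\approx(\sigma\lambda)^{n/2}\ll 1$, so $p_+$ is a hyperbolic sink for $F$ and therefore for $f_t^{n+2}$.

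For item (1), I would show $F(B^+_\alpha(c_1))\subset B^+_\alpha(c_1)$ coordinate-wise. The first coordinate $\lambda^n y$ lies in $[0,2\lambda^n c_1]$, which is far smaller than $t^\alpha=\sigma^{-n\alpha}c_1^\alpha$ because $\lambda^n=\sigma^{-n\gamma}$ decays much faster than $\sigma^{-n\alpha}$ (using $\gamma>\alpha$). The second coordinate differs from $c_1=\sigma^n t$ by at most $\sigma^n(2t^\alpha+t^\alpha)=3\sigma^{n(1-\alpha)}c_1^\alpha$, which is much smaller than $2^{1/2}t^{\alpha/2}=2^{1/2}\sigma^{-n\alpha/2}c_1^{\alpha/2}$ as soon as $\alpha>2$ forces $n(1-\alpha)<-n\alpha/2$ and $n$ is large. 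Combined with the contraction rate from the previous paragraph, this gives $F^k\to p_+$ uniformly on $B^+_\alpha(c_1)$, placing the box inside the local basin of $p_+$.

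For item (2), the key relation is $f_t^2(p_+)=L^{-n}(p_+)$, which is immediate from $p_+=F(p_+)=L^n(f_t^2(p_+))$. For any $q=(x,y)\in B^-_\alpha(c_1')$, a routine size check (using $c_1<1/2$ and $t\ll 1$) shows the orbit $q,L(q),\ldots,L^n(q)$ stays inside the linear box, so $f_t^n(q)=L^n(q)=(\lambda^n x,\sigma^n y)$. The estimates $|\lambda^n x-\lambda^n c_1|\leq 2^{1/2}\lambda^n t^{\alpha/2}\ll t^\alpha$ and $|\sigma^n y-c_1|=\sigma^n|y-t|<\sigma^{n(1-\alpha)}c_1^\alpha\ll 2^{1/2}t^{\alpha/2}$ (again via $\alpha>2$) yield $L^n(B^-_\alpha(c_1'))\subset B^+_\alpha(c_1)$. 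Since $f_t^{n+2}=f_t^2\circ L^n$ on this region, applying $F^k$ to $L^n(q)$ and shifting by $L^{-n}$ gives $f_t^{k(n+2)}(q)\to L^{-n}(p_+)=f_t^2(p_+)$, placing $B^-_\alpha(c_1')$ in the basin of $f_t^2(p_+)$. The hard part is the tight interplay of the three exponential scales $t=\sigma^{-n}c_1$, $\lambda^n=\sigma^{-n\gamma}$ and $t^\alpha=\sigma^{-n\alpha}c_1^\alpha$: the prescribed range $2<\alpha<3<\gamma$ is exactly what allows one to absorb both the quadratic critical-point distortion and the saddle's dissipative asymmetry.
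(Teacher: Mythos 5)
Your overall route is the paper's: you establish forward invariance of a box under the return map built from the quadratic normal form near $(0,c_1)$ composed with the $n$-fold linear passage near the saddle, and your coordinate estimates (the roles of $\alpha>2$, of $\sigma^n t=c_1$, and of $\gamma>\alpha$) are exactly those in the paper's proof of the inclusion $f_t^{n+2}(B^-_\alpha(c_1'))\subset B^-_\alpha(c_1')$; you merely base the return map at $B^+_\alpha(c_1)$ rather than at $B^-_\alpha(c_1')$, and you add an implicit-function-theorem localization of $p_+$ where the paper settles for bare existence of a periodic point in the invariant box. That part is sound, as is your reduction of item (2) to item (1) via $f_t^2(p_+)=L^{-n}(p_+)$.

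There is, however, one genuine gap: the passage from ``$F(B^+_\alpha(c_1))\subset B^+_\alpha(c_1)$ and $DF(p_+)$ has spectrum of modulus $\approx(\sigma\lambda)^{n/2}$'' to ``$F^k\to p_+$ uniformly on the box.'' A spectral estimate at the fixed point controls only a small neighborhood of $p_+$; a forward-invariant box could a priori contain other invariant sets. Worse, your own formula shows that at a general point of the box
\begin{equation*}
DF(x,y)=\begin{pmatrix}0&\lambda^n\\ \sigma^n&-2\sigma^n(y-c_1)\end{pmatrix},
\end{equation*}
whose Euclidean operator norm is of order $\sigma^n\gg 1$ because of the $(2,1)$ entry; so a single return is not a contraction in the standard metric and the mean value inequality cannot be invoked directly. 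This is precisely the step the paper spends effort on: it bounds $\|D_zf_t^{2n+4}\|$ for \emph{every} $z$ in the box, not just at the periodic point. The cleanest repair inside your framework is to note that after one application of $F$ the first coordinate is pinned to within $O(\lambda^nt^{\alpha/2})$ of $\lambda^nc_1$, so from the second iterate on the dynamics reduces to a one-dimensional map in $y$ whose derivative $-2\sigma^n(y-c_1)$ has modulus at most $2\sqrt{2}\,\sigma^nt^{\alpha/2}=2\sqrt{2}\,c_1t^{\alpha/2-1}\ll 1$ on the box (using $\alpha>2$ again), up to an error of order $(\sigma\lambda)^nt^{\alpha/2}$ coming from the residual $x$-variation; this yields the uniform contraction you need. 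Alternatively, estimate $\|DF^2\|$ in a norm adapted to the two scales. Either way the conclusion is true, but as written the claim that the whole box lies in the basin is not justified.
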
 

\begin{proof} First it is proven that  for $t=\s^{-n}c_1$ small  
\begin{eqnarray}\label{inc0}
f^{n+2}_t(B^-_{\al}(c_1'))\subset B^-_\al(c_1').
\end{eqnarray}
Observe that for that value of $t$, the $y-$coordinates of $f^n(c_1', t)$ is equal to $c_1.$ This implies that there is an attracting periodic point in the disk $ B^-_\al(c_1')$ with period $n+2$ and latter it is proved that the disk is contained in the basin of attraction.

Observe that $f^2_t(B^+_\al(c_1))\subset  B^-_\al(c_1'),$  and therefore  to prove inclusion  (\ref{inc0})  it is enough to prove that   
\begin{eqnarray}\label{inc2}
 f^n_t(B^-_{\al}(c_1'))\subset B^+_\al(c_1),
\end{eqnarray}
i.e.,  for any $(x, y)\in B^-_\al(c_1')$ we prove  
(i) $ c_1-t^\al< \s^ny\leq c_1+t^\al$ and (ii) $ \la^nx< t^\al$, where $ \la^nx $ and $\s^ny$ are the $x$ and $y$ coordinates of $f^n_t(x,y)$. In fact, on one hand if $(x, y)\in B^-_{\al}(c_1')$ then 
$t-t^\al < y < t+t^\al$ and so $c_1- \s^n t^\al < \s^n y< c_1+ \s^n t^\al$. Since  $t\s^n=c_1$ then $c_1- c_1t^{\al-1} < \s^n y< c_1+ c_1 t^{\al-1}$
but from the fact that $\al>2$ ( which implies that $\frac{\al}{2}< \al-1$) then $c_1t^{\al-1} < t^\al,$ concluding the first inequality. On the other hand, if $(x, y)\in B^-_{\al}(c_1')$ then 
$c_1'-t^{\frac{\al}{2}} < x< c_1'+t^{\frac{\al}{2}}$ and so $0< \la^n y< c_1'\la^n + \la^n t^{\frac{\al}{2}}$. Since  $t\s^n=c_1$ and $\la=\frac{1}{\s^\ga}$ then 
$0< \la^n x< t^\ga$ and recalling  $\al<\ga$ (in fact, $\ga>6$) it follows that $t^\ga<t^\al$ concluding the second inequality and proving inclusion (\ref{inc2}).

To conclude that $ B^-_\al(c_1')$ is inside the basin of attraction, it is shown that for any $z\in  B^-_\al(c_1')$, 
\begin{eqnarray}\label{norm<1}
 ||D_zf^{2n+4}_t||<1.
\end{eqnarray}
Observe that for $z\in  B^+_\al(c_1)$
$$D_zf^2_t=  \left(\begin{array}{cc}
        0 &   {f^u}'\\ 
        1 & \partial_y f^s 
\end{array}\right) $$ with $|\partial_y f^s_t|< 4 t^{\frac{\al}{2}}.$
So, using that $Df^n $ is the diagonal matrix with diagonal $\la^n, \s^n$ then for any $z\in  B^-_\al(c_1')$  
$$ D_zf^{2n+4}_t=\left(\begin{array}{cc}
        (\la\s)^n &   {f^u}'_t(z)\s^n\partial_y f^s_t(z)\\
        0 & (\la\s)^n + \s^{2n}\partial_y f^s_t(z)\partial_y f^s_t(f^{2n+2}_t(z)) 
\end{array}\right) $$ 
and since  $\s^n=\frac{c_1}{t}$ and $f^{2n+2}_t(z)\in B^+_\al(c_1)$ then $$\s^n\partial_y f^s_t(z)< t^{\frac{\al}{2}-1}<<1,\,\,\,|\s^{2n}\partial_y f^s_t(z)\partial_y f^s_t(f^{2n+2}_t(z))|<4t^{2(\frac{\al}{2}-1)}<<1. $$ Since $\la\s<1$ the inequality \ref{norm<1} is proved.
\end{proof}

\begin{figure}[subsection]
\begin{center}
    \includegraphics[height=6cm]{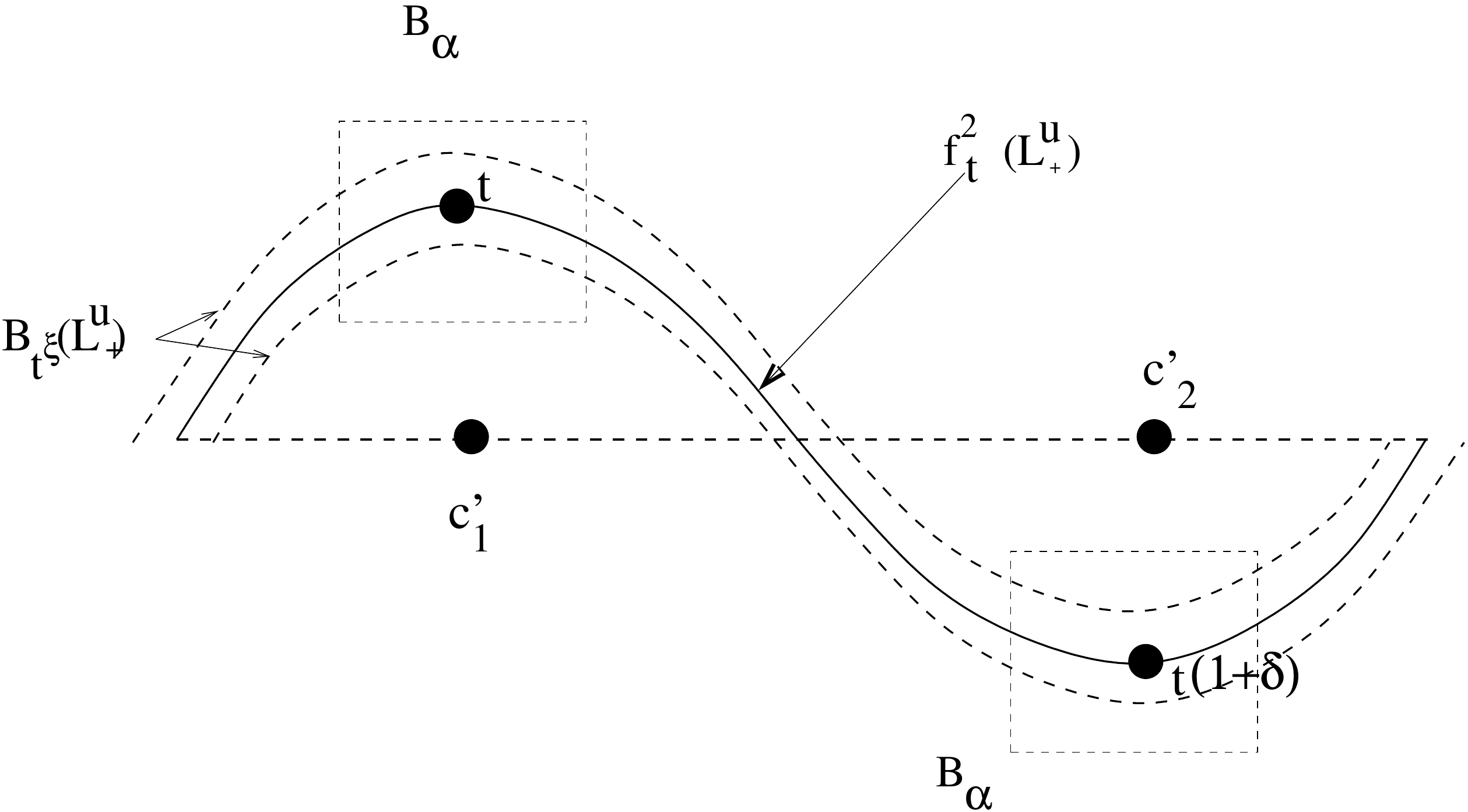}
\end{center}
\caption{Local Basin.}
\label{basin}
\end{figure}

The proof of the next  lemma follows from the symmetric property assumed on $f_t$. It basically states that there is also a sink created on $D^-$.

\begin{lemma}\label{pozo2} For $n$ sufficiently large, if  $t=\s^{-n}c_1$ then  there exists a periodic point $p_-$ contained in $D^-$ such that for $2<\al<3$ it is verified:
\begin{enumerate}
\item the regions $B^+_\al(-c_1)=\{(x,y): -t^{\al}< x < 0,  |y+c_1| < 2^\frac{1}{2} t^\frac{\al}{2} \}$ is contained in the local basin of attraction of $p_-,$

\item   $B^-_\al(-c_1')=\{(x,y): |x+c_1'|< 2^\frac{1}{2} t^\frac{\al}{2}, | y-t|< t^{\al}\}$  is contained in the local basin of attraction of $f^2_t(p_-)$ and in particular,  $(c_1', t)$ is in the basin of $p_+$.

\end{enumerate}
 
\end{lemma}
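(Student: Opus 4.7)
The map $f_t$ is centrally symmetric: $f_t(-p)=-f_t(p)$ by condition (1) of subsection \ref{pert initial}. My plan is to deduce the lemma from Lemma \ref{pozo1} by pushing forward through the involution $\iota\colon p\mapsto -p$, exactly as the authors suggest.

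First I would define $p_-:=-p_+$, where $p_+\in D^+$ is the attracting periodic point produced by Lemma \ref{pozo1}. Since $\iota$ is an isometric involution conjugating $f_t$ to itself, $p_-$ is periodic with the same period as $p_+$, and the chain rule together with the conjugation identity gives $\|Df_t^{2n+4}(p_-)\|=\|Df_t^{2n+4}(p_+)\|<1$, so $p_-$ is hyperbolic attracting. Moreover, by the symmetry of the loop $\gamma^+\cup\gamma^-$ set up in \S\ref{initial} we have $\iota(D^+)=D^-$, hence $p_-\in D^-$ as required.

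For the two basin inclusions, I would verify by inspection that the defining inequalities of $B^+_\alpha(-c_1)$ and $B^-_\alpha(-c_1')$ exhibit them as the images of $B^+_\alpha(c_1)$ and $B^-_\alpha(c_1')$ under $\iota$ (reading what appears to be a sign typo in the statement of $B^-_\alpha(-c_1')$ as $|y+t|<t^\alpha$). Since $f_t^k\circ\iota=\iota\circ f_t^k$ for every $k\geq 0$, the conclusions of Lemma \ref{pozo1} transport verbatim: the inclusion $B^+_\alpha(c_1)\subset W^s_{\mathrm{loc}}(p_+)$ becomes $B^+_\alpha(-c_1)\subset W^s_{\mathrm{loc}}(p_-)$, and similarly $B^-_\alpha(-c_1')\subset W^s_{\mathrm{loc}}(f_t^2(p_-))$, yielding in particular that $(-c_1',-t)$ lies in the basin of $p_-$.

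I do not anticipate any substantive obstacle, as the entire argument is a formal application of the centrally symmetric structure of $f_t$; the only potentially nontrivial point is the \emph{a priori} check that the perturbation $f^s_t$ has been built symmetrically on the fundamental domains $L^u_+$ and $L^u_-=-L^u_+$, which is precisely what the construction in subsection \ref{pert initial} guarantees.
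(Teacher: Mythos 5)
Your argument is correct and is exactly the paper's approach: the paper gives no written proof beyond the remark that the lemma ``follows from the symmetric property assumed on $f_t$,'' i.e.\ from the central symmetry $f_t(-p)=-f_t(p)$, which is precisely the involution argument you carry out. Your observations that $B^-_\alpha(-c_1')$ should read $|y+t|<t^\alpha$ and that the conclusion should be that $(-c_1',-t)$ lies in the basin of $p_-$ correctly identify typos carried over from Lemma \ref{pozo1}.
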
 

The next lemma is about a sink that is created in $D.$ 

\begin{lemma}\label{pozo3}For $n$ sufficiently large, if  $t=\s^{-n}c_1$ then there exists a hyperbolic attracting periodic point $q$ contained in $D=[D^+\cup D^-]^c$ such that for $2<\al<3$ it is verified:
\begin{enumerate}

\item  $B^+_\al(c_2)=\{(x,y): -s^{\al}< x < 0,  |y+c_2| < 2^\frac{1}{2} s^\frac{\al}{2} \}$ ($s=(1+\de)t, \de=\frac{c_2}{c_1}-1$) is contained in the local basin of attraction of $p_+$, 

\item   $B^-_\al(c_2')=\{(x,y): |x+c_2'|< 2^\frac{1}{2} s^\frac{\al}{2}, | y-s|< s^{\al}\}$  is contained in the local basin of attraction of $f^2_t(q)$ and in particular,  $(c_2', -s)$ is in the basin of $q$.

\item the regions $B^+_\al(-c_2)=\{(x,y): 0< x < s^{\al},  |y-c_2| < 2^\frac{1}{2} s^\frac{\al}{2} \}$ is contained in the local basin of attraction of $q,$

\item   $B^-_\al(-c_2')=\{(x,y): |x+c_2'|< 2^\frac{1}{2} s^\frac{\al}{2}, | y-s|< s^{\al}\}$ is contained in the local basin of attraction of $f^2_t(q)$ and in particular,  $(-c_2', s)$ is in the basin of $q$.

\end{enumerate}
 
\end{lemma}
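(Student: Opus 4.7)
The plan is to mimic the proof of Lemma \ref{pozo1}, but now the orbit of the critical point $(0,c_2)$ under $f_t^2$ alternates between both fundamental domains $B^u_\pm$ because $f^s_t(c_2)=-s<0$, where $s=(1+\delta)t=c_2 t/c_1$ so that $\sigma^n s = c_2$ (the analogue of $\sigma^n t = c_1$). Starting at $(0,c_2)$ and ignoring errors of order $(\lambda\sigma)^n$, one computes
\[
(0,c_2)\xrightarrow{f_t^2}(c_2',-s)\xrightarrow{f_t^n}(\lambda^n c_2',-c_2)\xrightarrow{f_t^2}(-c_2',s)\xrightarrow{f_t^n}(-\lambda^n c_2',c_2)\xrightarrow{f_t^2}(c_2',-s),
\]
where the middle $f_t^2$ is the symmetric formula valid near $(0,-c_2)$. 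This orbit lies entirely in the second and fourth quadrants, hence in the basin of $R_3$ and so in $D=(D^+\cup D^-)^c$ for small $t$, and it produces a candidate attracting periodic orbit of period $2n+4$ whose $f_t^2$-distinct points sit near the centers of the four boxes appearing in items $(1)$--$(4)$.

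Next I would verify the four forward-invariance statements one box at a time, exactly as in Lemma \ref{pozo1}. For each stage one applies $n$ linear steps followed by $f_t^2$ near the next critical point. The vertical dimension is multiplied by $\sigma^n=c_2/s$, so a strip of vertical radius $s^\alpha$ becomes one of radius $c_2 s^{\alpha-1}<s^{\alpha/2}$ (using $\alpha>2$); the horizontal dimension is contracted by $\lambda^n=\sigma^{-\gamma n}$, giving errors $\lambda^n s^{\alpha/2}\ll s^{\alpha}$ (using $\gamma>2\alpha$). The quadratic behavior of $f^s_t$ at the critical point $c_2$ then transforms this thickness correctly and places the image inside the next box. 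Composing four such stages yields $f_t^{2n+4}(B)\Subset B$ for each of the four boxes.

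For the contraction of $Df_t^{2n+4}$, I would multiply four blocks of the form $\mathrm{diag}(\lambda^n,\sigma^n)\cdot Df_t^2$, where $Df_t^2=\bigl(\begin{smallmatrix} 0 & \pm(f^u)' \\ 1 & \partial_y f^s_t \end{smallmatrix}\bigr)$ with $|\partial_y f^s_t|=O(s^{\alpha/2})$ on the box. The key bounds $\sigma^n\,|\partial_y f^s_t|=O(s^{\alpha/2-1})\to 0$ and $(\lambda\sigma)^n\to 0$ are exactly those used in Lemma \ref{pozo1}, so the full product has norm less than $1$ for large $n$. This gives a unique attracting fixed point $q$ of $f_t^{2n+4}$ sitting near each of the four boxes, and each of the boxes is contained in the local basin of the corresponding iterate of $q$.

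The main technical obstacle, as in Lemma \ref{pozo1}, is the careful bookkeeping of accumulated errors through the four-stage orbit and verifying that the estimates on box sizes and derivative norms persist after four compositions. However, since the identity $\sigma^n s = c_2$ mirrors $\sigma^n t = c_1$, each required estimate reduces to one already carried out in Lemma \ref{pozo1}, now applied in symmetric pairs corresponding to the two critical points $\pm c_2$ of $f^s_t$ on $L^u_+\cup L^u_-$.
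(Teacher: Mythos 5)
Your proposal is correct and follows essentially the same route as the paper: the key identity $\sigma^n s=\sigma^n t(1+\delta)=c_2$, the four-box cycle $B^+_\al(c_2)\to B^-_\al(c_2')\to B^+_\al(-c_2)\to B^-_\al(-c_2')\to B^+_\al(c_2)$ obtained by alternating $f_t^2$ near the critical points $\pm c_2$ with $n$ linear iterates, and the reduction of both the forward-invariance and the derivative estimates to those already carried out in Lemma \ref{pozo1}. Your period count $2n+4$ is the correct one (the paper writes $2n+2$, apparently a typo), and your inequality $c_2 s^{\alpha-1}<s^{\alpha/2}$ is the right form of the landing estimate.
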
 

\begin{proof} Observe that for the construction of $f_t$  it follows that $f^2_t(B^+_\al(c_2))\subset B^-_\al(c_2')$ and $f^2_t(B^+_\al(-c_2))\subset B^-_\al(-c_2')$. On the other hand, since $\s^nt=c_1,$ then $\s^nt(1+\de)= c_2$ (recall that $\de=\frac{c_2}{c_1}-1$).  Repeating the calculation in Lemma \ref{pozo1} and recalling the property of $f_t$ (more precisely, item (4.d) in subsection \ref{pert initial})   follows that  
$f^n_t(B^-_\al(c_2'))\subset B^+_\al(-c_2)$ and $f^n_t(B^-_\al(-c_2'))\subset B^+_\al(c_2)$. Therefore, 
$$f_t^{2n+2}(B^+_\al(c_2))\subset B^+_\al(c_2)$$ and so there is a semiattracting periodic point there. In the same way as in the proof of Lemma \ref{pozo1}, also holds that $$||D_zf^{2n+2}||<1$$ for any $z\in B^+_\al(c_2)$ so the the semiattracting periodic point is a hyperbolic sink such that $B^+_\al(c_2)$ is contained in  its  basin of attraction. A similar argument applies for $B^-_\al(c_2'), B^-_\al(-c_2')$ and $ B^+_\al(-c_2).$

\end{proof}

\begin{corollary}\label{pozocor} There exists $t$ arbitrarily small such that the theses of lemmas \ref{pozo1}, \ref{pozo2} and \ref{pozo3} hold.
\end{corollary}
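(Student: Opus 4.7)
The plan is to observe that Lemmas \ref{pozo1}, \ref{pozo2}, and \ref{pozo3} all apply to the \emph{same} one-parameter family of values $t = t_n := \sigma^{-n} c_1$. Thus the corollary is simply a matter of taking $n$ large enough so that all three hypotheses are satisfied simultaneously, and then letting $n \to \infty$ to obtain arbitrarily small $t_n$ (since $\sigma > 1$).

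More precisely, for each of the three lemmas, let $n_i$ ($i=1,2,3$) denote the threshold beyond which its conclusion holds, and set $n_0 = \max\{n_1,n_2,n_3\}$. We also need $t_{n_0}$ to be small enough so that the perturbation construction of $f_t$ in subsection \ref{pert initial} is valid; since the construction works for all $t$ in a fixed small neighborhood of $0$, by enlarging $n_0$ if necessary we may assume $t_{n_0}$ lies in that neighborhood. Then for every $n \geq n_0$, the single parameter value $t_n = \sigma^{-n}c_1$ simultaneously satisfies the theses of all three lemmas.

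Since $\sigma > 1$, we have $t_n \to 0$ as $n \to \infty$, so this provides a sequence of admissible parameters tending to $0$. This completes the proof. I do not foresee any real obstacle here: the statement is essentially a bookkeeping observation, made non-trivial only by the fact that one must check that the \emph{same} choice of $t$ produces all three sinks. That uniformity was already built into the lemmas by taking $t$ of the precise form $\sigma^{-n}c_1$ (so that $\sigma^n t = c_1$, $\sigma^n t(1+\delta) = c_2$ simultaneously), and no further argument is required.
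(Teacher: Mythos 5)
Your proof is correct and is essentially the argument the paper intends (the corollary is stated without proof as an immediate consequence): all three lemmas hold for the same parameter values $t=\sigma^{-n}c_1$ once $n$ exceeds the maximum of the three thresholds, and these values tend to $0$ since $\sigma>1$.
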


\begin{figure}[subsection]
\begin{center}
    \includegraphics[height=9cm]{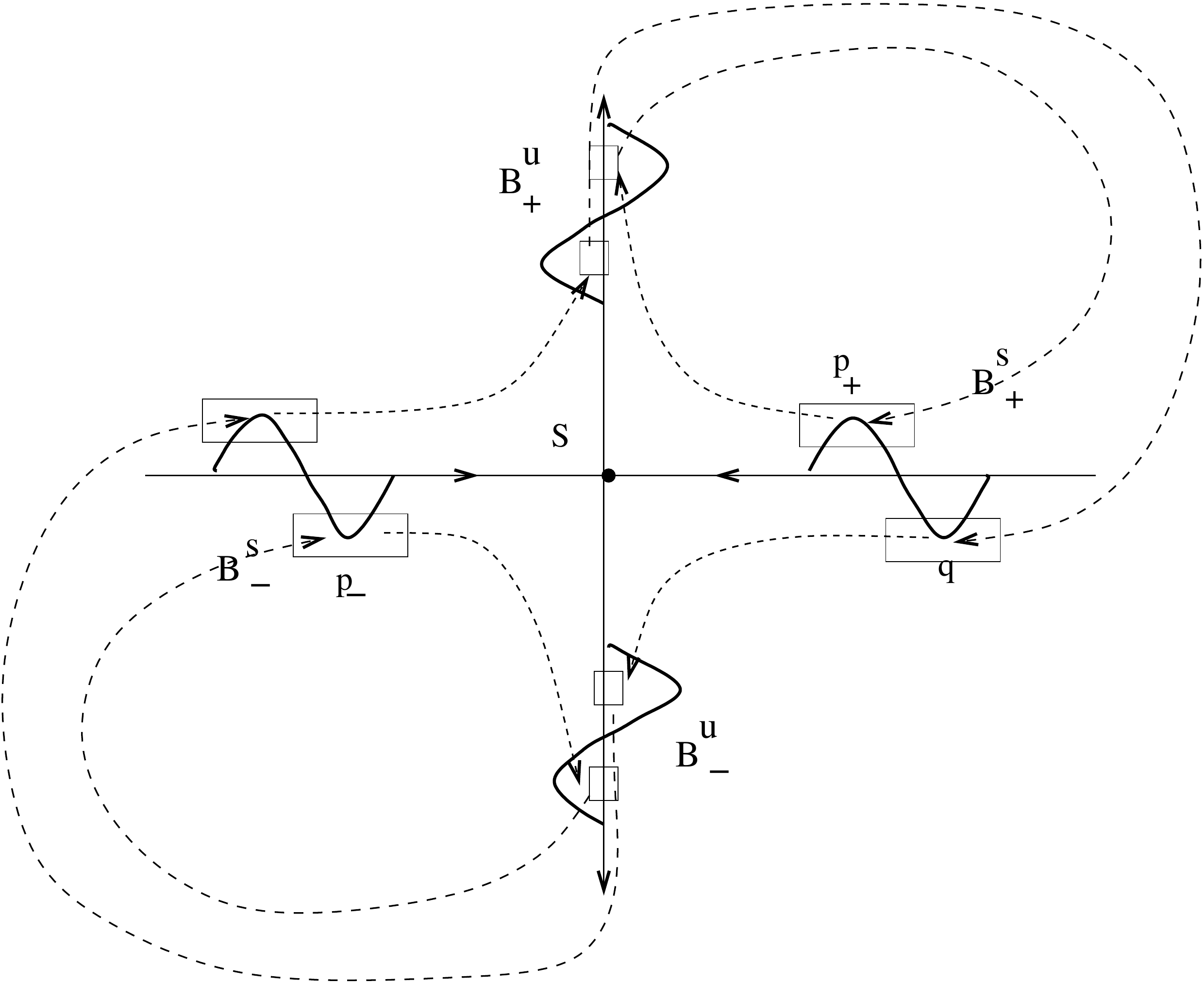}
\end{center}
\caption{Attracting periodic points of  $f_t.$}
\label{map-2}
\end{figure}

\begin{lemma}\label{localiza1} For any small $t$, if $x\in \Omega(f_t)$ then either $x\in \{R_1, R_2, R_3\}$ or belongs to $ B_{t^{\frac{1}{r}}}(\ga^+\cup \ga^-)$ (a neighborhood of radius  $t^{\frac{1}{r}}$ of $\ga^+\cup \ga^-$).  In particular, if $x\notin \{S, R_1, R_2, R_3\}$   then there exists an iterate of $x$ that belongs to $ B^s_{t^{\frac{1}{r}}}(L^s_+\cup L^s_-).$
 
\end{lemma}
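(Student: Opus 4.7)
My plan is to apply Remark \ref{viz} with the tailored neighborhood $W = W_t := B_{t^{1/r}}(\ga^+ \cup \ga^-)$ for each small $t > 0$ (with $r$ large, as in the construction), and to derive both assertions from this specific choice. The key estimate is that $\ga^+ \cup \ga^-$ is an attracting set of the flow $X$ with some exponential normal rate $\mu > 0$, so the time-$1/m$ map $f_0 = \Phi_{1/m}$ contracts an $\epsilon$-tube around the loop into an $\epsilon e^{-\mu/m}$-tube. Since $\norm{f_t-f_0}_{C^0}=O(t)$ and $r>1$ implies $t = o(t^{1/r})$ as $t\to 0^+$, I obtain $f_t(W_t)\subset W_t$ for all $t$ sufficiently small; an analogous continuity argument shows that for such $t$ the complement $W_t^c$ is still contained in the union of the repelling basins of $R_1$, $R_2$, $R_3$, so Remark \ref{viz} applies with this specific $W = W_t$.

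The first assertion then follows from a standard argument. If $x \in W_t^c \setminus \{R_1, R_2, R_3\}$, Remark \ref{viz} places $x$ in the repelling basin of one of the $R_i$; the backward orbit converges to $R_i$ (and stays in $W_t^c$ by backward invariance of $W_t^c$), while the forward orbit enters $W_t$ and is trapped there by forward invariance. A Lyapunov function for the flow $X$ on the basin of $R_i$, strictly monotone along forward orbits of $\Phi_{1/m}$ and hence by continuity also along those of $f_t$ (for $t$ small), forces $f_t^n(U) \cap U = \emptyset$ for every $n \geq 1$ and every sufficiently small neighborhood $U$ of $x$. Thus $x$ is wandering and $\Omega(f_t) \subset \{R_1, R_2, R_3\} \cup W_t$, which is exactly the first assertion of the lemma.

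For the ``in particular'' part, let $x \in \Omega(f_t) \setminus \{S, R_1, R_2, R_3\}$. Since $\Omega(f_t)$ is invariant and disjoint from the repellers, the entire $f_t$-orbit of $x$ lies in $W_t$. The closest-point projection of this orbit onto $\ga^+ \cup \ga^-$ moves as a pseudo-orbit of $\Phi_{1/m}$ with error $O(t^{1/r})$; since $L^s_+ \cup L^s_-$ is a fundamental domain for the local stable manifold of $S$, every $\Phi_{1/m}$-orbit on $\ga^+ \cup \ga^-$ different from $\{S\}$ meets $L^s_+ \cup L^s_-$ at some finite iterate. Tracking this through the tube structure of $W_t$ then yields an iterate $f_t^k(x) \in B^s_{t^{1/r}}(L^s_+ \cup L^s_-)$.

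The main obstacle I anticipate is handling orbits that approach $S$ very closely, where the flow slows and the closest-point projection argument becomes delicate: in principle such an orbit could ``hover'' near the saddle without hitting the fundamental domain in a predictable number of iterates. I would resolve this using the explicit linearization $f_t(x,y) = (\la x, \s y)$ valid in $[-\tfrac12,\tfrac12]\times[-\tfrac12,\tfrac12]$, which forces any orbit of $f_t$ entering the saddle box from $W_t$ to cross the transversal section containing $L^s_+$ (or $L^s_-$) within a uniformly bounded number of iterates, landing at that moment in the required tube $B^s_{t^{1/r}}(L^s_+ \cup L^s_-)$.
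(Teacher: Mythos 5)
Your proof rests on a key estimate that is false: the uniform tube $W_t=B_{t^{1/r}}(\ga^+\cup\ga^-)$ is \emph{not} forward invariant under $f_0=\Phi_{1/m}$ (nor under $f_t$), because the loop contains the hyperbolic saddle $S$. Inside the linearizing box the map is $(x,y)\mapsto(\la^{1/m}x,\s^{1/m}y)$ with $\s^{1/m}>1$, and the segment $[-\tfrac12,\tfrac12]\times\{0\}$ of the local stable manifold is part of $\ga^+\cup\ga^-$; a point such as $(\tfrac12,\epsilon)$ lies at distance $\epsilon$ from the loop but is mapped to distance $\approx\s^{1/m}\epsilon>\epsilon$ from it. So the claim that an $\epsilon$-tube contracts into an $\epsilon e^{-\mu/m}$-tube fails precisely where it matters, and everything downstream that uses forward invariance of $W_t$ (the trapping of forward orbits in $W_t$, the backward invariance of $W_t^c$, and the Lyapunov-function wandering argument, which needs the orbit to stay trapped once it reaches the loop) is unsupported. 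The attracting neighborhoods of the loop guaranteed by Remark \ref{preserve}/Remark \ref{viz} exist, but they are produced by the return-map contraction on the cross-sections and are necessarily non-uniform in width near $W^s_{loc}(S)$; they cannot be taken to be round tubes of radius $t^{1/r}$. Note also that the lemma itself never asserts invariance of $B_{t^{1/r}}(\ga^+\cup\ga^-)$, so trying to prove it is the wrong target.

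The intended argument is much more direct and does not go through Remark \ref{viz} with a tailored $W$ at all: by construction (item (5) of \S\ref{pert initial}) the perturbation turning $f_0$ into $f_t$ is \emph{supported} in the $t^{1/r}$-neighborhood of $L^u_+\cup L^u_-$, so $f_t$ coincides with $f_0=\Phi_{1/m}$ on the complement of that set. For the time-$\tfrac1m$ map of the flow $X$, every point off $\{S,R_1,R_2,R_3\}\cup\ga^+\cup\ga^-$ is wandering, and this wandering behavior persists for $f_t$ at any point outside $B_{t^{1/r}}(\ga^+\cup\ga^-)$, since the maps agree there. This gives the first assertion, and the ``in particular'' part then follows from the unperturbed flow dynamics along the loop (every orbit in a thin neighborhood of $\ga^\pm$ other than $S$ crosses a neighborhood of the fundamental domains $L^s_\pm$), rather than from a closest-point-projection pseudo-orbit argument, which, as you yourself note, is delicate near the saddle. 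Your use of the linearization to control passage near $S$ is the right instinct for that last step, but you should anchor the whole proof on the support of the perturbation rather than on a contraction of uniform tubes.
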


\begin{proof} It follows immediately from the fact that $f_t$ restricted to the complement of  $B^u_{t^{\frac{1}{r}}}$ coincides with $f_{0}.$
 
\end{proof}

Now, using that $f_t$ is dissipative in a neighborhood of $S $ and Lemma \ref{localiza1} we conclude that the non-wandering set inside $B^s_+$ is contained in a small strip around $f^2_t(L^u_+)$; in the same way, the non-wandering set inside $B^s_-$ is contained in a small strip around $f^2_t(L^u_-).$ We fix  first $\al$ larger than $2$ and smaller than $3$.

\begin{lemma}\label{nonwander} For small $t$, there exists $\xi$ verifying $\al<\xi<3$  such that if $\La_{t}=\cap_{n\in \Z} f^n_t(W)$ then  
\begin{enumerate}
\item $\La_t\cap B^s_+\subset B_{t^\xi}(f^2_t(L^u_+)),$ and so  $\Omega(f_t)\cap B^s_+\subset B_{t^\xi}(f^2_t(L^u_+));$
 \item   $\La(f_t)\cap B^s_-\subset B_{t^\xi}(f^2_t(L^u_-)),$ and so $\Omega(f_t)\cap B^s_-\subset B_{t^\xi}(f^2_t(L^u_-));$
\end{enumerate}
where $B_{t^\xi}(f^2_t(L^u_\pm))$ denotes the neighborhood of size $\xi$ of $f^2_t(L^u_\pm).$
In particular, it follows that for any  $x\in \Omega(f_t)$ then either $x\in \{S, R_1, R_2, R_3\}$ or there exists an iterate of $x$ that belongs to $ B_{t^\xi}(f^2_t(L^u_-))\cup  B_{t^\xi}(f^2_t(L^u_+)).$

\end{lemma}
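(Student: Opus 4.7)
My plan is to pull back $\La_t\cap B^s_+$ through $f^{-2}_t$ and then exploit the strong $x$-contraction of the first-return map to $B^u_+$. In the chosen coordinates $f^2_t(x,y)=(f^u(y),x+f^s_t(y))$, so $f^2_t$ is injective on $B^u_+$ and its image covers $B^s_+$ (the $y$-range of $f^2_t(B^u_+)$ equals $[-\e-t(1+\de),\e+t]\supset[-\e,\e]$). Consequently every $z=(z_x,z_y)\in B^s_+$ has a unique preimage $w:=f^{-2}_t(z)\in B^u_+$ with $w_y=(f^u)^{-1}(z_x)$ and $w_x=z_y-f^s_t(w_y)$; the distance from $z$ to the graph-like curve $f^2_t(L^u_+)=\{(f^u(y),f^s_t(y)):y\in L^u_+\}$ is therefore at most the vertical distance $|z_y-f^s_t(w_y)|=|w_x|$. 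The lemma thus reduces to bounding $|w_x|$ on $\La_t\cap B^u_+$.

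The key estimate comes from the first-return map $R$ to $B^u_+$. For a putative predecessor $w_{-1}\in B^u_+$ with $w=R(w_{-1})$, the image $f^2_t(w_{-1})=(f^u(w_{-1,y}),Y_0)$ with $Y_0:=w_{-1,x}+f^s_t(w_{-1,y})$ falls into the linear saddle box, where $f_t(x,y)=(\la x,\s y)$ and $\la=\s^{-\ga}$. After $k$ linear iterates the point becomes $(\la^k f^u(w_{-1,y}),\s^k Y_0)$, and the first return to $B^u_\pm$ occurs exactly when $|\s^k Y_0|=b$, giving $\la^k=(|Y_0|/b)^\ga$. I would then read off the fundamental inequality
\[
|w_x|=\la^k|f^u(w_{-1,y})|\leq \frac{a}{b^\ga}|Y_0|^\ga\leq C\bigl(|w_{-1,x}|+2t\bigr)^\ga,
\]
i.e.\ $|w_x|\leq F(|w_{-1,x}|)$ with $F(u):=C(u+2t)^\ga$, and iterate to obtain $|w_x|\leq F^n(|w_{-n,x}|)$ for every backward return $w_{-n}:=R^{-n}(w)$ that is defined.

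To close the argument I exploit that the entire backward orbit of $w\in\La_t$ lies in $\La_t\subset W$. A short sign analysis in the saddle linearization shows that either this orbit converges to $S$ (yielding $w\in\{0\}\times L^u_+$ and $|w_x|=0$), or every $w_{-n}$ stays in $B^u_+$ and hence $|w_{-n,x}|\leq \e$. The function $F$ is convex and increasing; for small $t$ it has two positive fixed points, a stable one $x^*\asymp t^\ga$ and an unstable one $x^{**}$ bounded below by a positive constant independent of $t$. Choosing $\e<x^{**}$ (permitted since $\e$ is a free parameter of the construction), the iterates $F^n(\e)$ decrease to $x^*$, so $|w_x|\leq \lim_{n\to\infty}F^n(\e)=x^*\leq C_1 t^\ga$. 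Since $\ga>6$, any $\xi$ with $\al<\xi<3$ satisfies $\xi<\ga$, so $|w_x|\leq C_1 t^\ga\leq t^\xi$ for all sufficiently small $t$. This establishes $\La_t\cap B^s_+\subset B_{t^\xi}(f^2_t(L^u_+))$; the inclusion for $B^s_-$ follows by the symmetry $f_t(-p)=-f_t(p)$, and both $\Omega(f_t)$-statements from $\Omega(f_t)\subset\La_t$. The main technical obstacle is verifying that the first-return map $R^{-1}$ is really defined on all of $\La_t\cap B^u_+$, i.e.\ that the backward orbit does not escape $B^u_+$ to $B^u_-$ or leave $W$; this is exactly where the sign analysis of the linear saddle and the narrowness of $W$ come in.
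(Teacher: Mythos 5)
Your proof is correct in substance but takes a genuinely different route from the paper's. The paper gets the bound in a single pass through the saddle: by Lemma \ref{localiza1} the nonwandering set lies within $t^{1/r}$ of the loop (because $f_t$ coincides with $f_0$ outside a $t^{1/r}$-neighborhood of $L^u_+\cup L^u_-$), so an orbit enters the saddle box at height at most $t^{1/r}$ and leaves it at horizontal distance at most $\la^{n_z}\leq t^{\ga/r}$ from $W^u_{loc}(S)$; the standing hypothesis $\ga>3r$ then yields $\ga/r>3>\xi$. You never use the $t^{1/r}$ support of the perturbation nor the relation $\ga>3r$: instead you set up the backward first-return recursion $|w_x|\leq C(|w_{-1,x}|+2t)^{\ga}$ and iterate it down to the stable fixed point $x^*\asymp t^{\ga}$, which needs only $\ga>\xi$. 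Your bound $O(t^{\ga})$ is in fact sharper than the paper's $O(t^{\ga/r})$; what the paper's route buys is that all the quantitative input is concentrated in Lemma \ref{localiza1}, so no analysis of the backward return map is needed. Two caveats on your version. First, the step you flag as the main obstacle --- that every $w\in\La_t\cap B^u_+$ off $W^u_{loc}(S)$ admits a backward return to $B^u_+\cup B^u_-$ --- is not optional: a single application of $F$ only gives $C(\e+2t)^{\ga}$, a constant, so the iteration must actually run, and making it run requires choosing the width of $W$ small relative to $\e$ and $b$ so that a backward orbit confined to the L-shaped neighborhood of the axes really lands in $B^s_\pm$ with $|y|\leq\e$ before its $x$-coordinate escapes; this is provable but is genuine work. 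Second, your dichotomy is slightly off: a point of $W^u(S)\cap B^u_+$ not lying on $W^u_{loc}(S)$ has backward orbit converging to $S$ yet $w_x\neq 0$; for such points the recursion terminates at some $w_{-n}\in W^u_{loc}(S)$ with $|w_{-n,x}|=0$ and still gives $|w_x|\leq F^n(0)\leq x^*$, and the backward returns may alternate between $B^u_+$ and $B^u_-$, which by the symmetry $f_t(-p)=-f_t(p)$ leaves the recursion unchanged. With these repairs your argument delivers the lemma, including the $\Omega(f_t)$ statements, since $\Omega(f_t)\setminus\{R_1,R_2,R_3\}\subset\La_t$ by Remark \ref{viz}.
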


\begin{proof} Recall that $f_t$ coincides with $f_{0}$ outside a neighborhood of size of $t^{\frac{1}{r}}$ of $L^u_+\cup L^u_-$ and so from Lemma \ref{localiza1}  we have to consider the non-wandering set inside $B^s_{t^{\frac{1}{r}}}(L^u_+\cup L^u_-)$. Let $m$ be the first positive integer such that  $\s^m t^{\frac{1}{r}}\in L^u_+\cup L^u_-$  and this implies that
for any $z\in  B^s_{t^{\frac{1}{r}}}$ such that there exists a first positive integer $n_z$ with $f^{n_z}(z)\in B^u_+,$ then $dist(L(z), L^u)<\la^{n_z}$ and $n_z\geq m.$
Since $\la=\s^{-\ga}$  then $\la^{n_z}\leq  \la^m <\s^{-\ga m}\leq  t^{\frac{\ga}{r}} $ and from the election of $\ga$ (see second item in subsection \ref{initial}), which verifies that $\frac{\ga}{r}>3 $ it follows that there exists $\xi$ larger than $\al$ (and without loss of generality smaller than $3$)  such that $\frac{\ga}{r} >\xi$ and so 
 $dist(f^{n_z}_t(z), L^u)<\la^{n_z}< t^{\xi}$. From the definition of $f^2_t$ the thesis of the lemma is concluded.

\end{proof}

\begin{remark}\label{rob t} Observe that the thesis of lemmas \ref{pozo1}, \ref{pozo2}, \ref{pozo3}, and \ref{nonwander} hold for small $C^r$ perturbations of $f_t$.
 
\end{remark}

\begin{remark} In a similar way as in the proof of Lemma \ref{nonwander}  it can be concluded  that
 \begin{enumerate}
 \item   if $x\in B^s_\pm$ then either $x\in W^s_{loc}(S)$ or there exists a forward  iterate that return to  $B^s_+\cup B^s_-$;
\item  if $x\in B^u_\pm $ then either $x\in W^u_{loc}(S)$ or there exists a backward   iterate that return to  $B^u_+\cup B^u_-.$ 
\end{enumerate}
\end{remark}

In what follows we prove the existence of a dominated splitting on the non wandering set excluding  the attracting and repelling points $p_+, p_-, q,$ $R_1, R_2, R_3.$ A compact invariant  set $\La$ has a dominated splitting if there exist two complementary invariant subbundles $E\oplus F$  by the action of the derivative such that
$||Df^n_{E(z)}||||Df^{-n}_{F(f^n(z)}||< \frac{1}{2}$ for any $z\in \La$ and any positive integer  $n$ sufficiently large. The existence of dominated splitting is equivalent to the existence of invariant cone fields, i.e., a cone field $\{{\cal C}(z)\}_{z\in \La}$ such that 
$$Df^n({\cal C}(z))\subset interior ({\cal C}(f^n_t(z))$$ for any $z\in \La$ and any positive integer  $n$ sufficiently large.

\begin{lemma} \label{hyplemma} For any $t$ verifying the these44s of Lemmas \ref{pozo1}, \ref{pozo2} and \ref{pozo3}, the set $\La_{t}$ has a dominated splitting. In particular, $\Omega(f_t)
\setminus \{p_+, p_-, q, R_1, R_2, R_3\}$ has a dominated splitting. 
 \end{lemma}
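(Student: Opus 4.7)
The plan is to construct a $Df_t$-invariant cone field $\mc{C}$ on $\La_t\setminus\{p_+,p_-,q,R_1,R_2,R_3\}$ that is strictly contracting along orbits, which yields the dominated splitting by the standard invariant-cone criterion.

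First, I localize and estimate derivatives. By Lemma \ref{nonwander}, any orbit in $\La_t$ outside $\{R_1,R_2,R_3\}$ is confined to a thin strip around the loops $\ga^\pm$ and every such orbit traverses the boxes $B^u_\pm$ and $B^s_\pm$. Combined with Lemmas \ref{pozo1}, \ref{pozo2}, \ref{pozo3}, which place the regions $B^\pm_\al(\pm c_i)$ inside the basins of $p_+,p_-,q$, one deduces that for $z\in\La_t\setminus\{p_\pm,q,R_i\}$ whose orbit visits $B^u_\pm$, the second coordinate $y\in L^u_\pm$ stays at distance at least $2^{1/2}t^{\al/2}$ from each of the critical points $c_1,c_2,-c_1,-c_2$. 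Since $f^s_t$ is quadratic near each critical point, this yields the uniform lower bound $|\partial_y f^s_t(y)|\geq 2^{3/2}t^{\al/2}=:\eta_t$.

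Second, I construct the cone field. In the linear region $[-1/2,1/2]^2$ (where $f_t=\operatorname{diag}(\la,\s)$), take the vertical cone $\mc{C}(z)=\{(v_x,v_y):|v_x|\leq\kappa(z)|v_y|\}$, with the opening $\kappa(z)$ varying along each orbit so that $\kappa$ contracts by the factor $\la/\s=\s^{-(1+\ga)}$ per iterate. When an orbit enters $B^u_\pm$ and undergoes the loop transition, the derivative
\[
Df^2_t(z)=\begin{pmatrix} 0 & (f^u)'(y) \\ 1 & \partial_y f^s_t(y) \end{pmatrix}
\]
maps the vertical cone of opening $\kappa$ into a vertical cone of opening at most $|(f^u)'|/(\eta_t-\kappa)$, provided $\kappa<\eta_t$; this is a direct calculation using the lower bound from the first step. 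The resulting cone family is $Df_t$-invariant by construction.

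Third, I verify strict net contraction over each return cycle. Any orbit in $\La_t\setminus\{p_\pm,q,R_i\}$ spends $M$ iterates in the linear region between consecutive visits to $B^s_\pm$, with $\s^M\geq b/|f^s_t(y_0)|$ for some $y_0\in L^u_\pm$. Since $|f^s_t(y_0)|\leq t(1+\de)$, this forces $M$ to be at least of order $\log(1/t)/\log\s$. The net factor applied to the cone opening over one return is therefore bounded by
\[
(\la/\s)^M\cdot\frac{|(f^u)'|}{\eta_t}\leq C\,t^{1+\ga}\cdot t^{-\al/2}=C\,t^{1+\ga-\al/2}.
\]
Since $\ga>6$ and $\al<3$, we have $1+\ga-\al/2>0$, so for $t$ small this factor is strictly less than some $\mu<1$. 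Distributing this contraction uniformly over the bounded number of iterates per return cycle yields $N\in\N$ and $\tau<1$ such that $Df_t^N$ contracts cone openings by factor $\tau$, which by the standard criterion gives a dominated splitting on $\La_t\setminus\{p_\pm,q,R_i\}$. The ``in particular'' clause follows since $\Omega(f_t)\subset\La_t\cup\{R_1,R_2,R_3\}$.

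The main obstacle is that the loop transition $Df^2_t$ swaps horizontal and vertical directions: it sends $(1,0)$ to $(0,1)$ and nearly maps $(0,1)$ to a multiple of $(1,0)$. Consequently, no constant vertical cone is preserved by $Df_t$, and the cone opening must be allowed to grow at loop transitions and shrink in the linear region. Making this quantitative requires both the lower bound on $|\partial_y f^s_t|$ obtained by excluding the basins of the sinks and the strong dissipativity $\ga>6$ of the saddle. The analysis follows the strategy of Newhouse--Palis \cite{newhouse-palis}, adapted to the piecewise form of $Df_t$ imposed by the loop geometry.
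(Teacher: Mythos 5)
Your proof is correct and follows essentially the same route as the paper: both exclude $t^{\al/2}$-neighborhoods of the critical points of $f^s_t$ from the nonwandering set via Lemmas \ref{pozo1}--\ref{pozo3} and \ref{nonwander} to get the lower bound on $\abs{\partial_y f^s_t}$, build a cone field that is vertical along $L^u_\pm$ and follows the image direction after the loop transition, and use the strong dissipation $\la=\s^{-\ga}$ with $\ga>6$ to restore verticality during the long passage near the saddle. Your variable-opening cones $\kappa(z)$ versus the paper's fixed opening $t^\xi$ (and its explicit handling of $W^s_{loc}(S)$ and of the extension by iteration and closure) are only cosmetic differences.
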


\begin{proof} It is enough to show that for the set of points that is not contained in the local  basin of attraction of the periodic points given by Lemmas \ref{pozo1}, \ref{pozo2} and \ref{pozo3} it is possible to build an invariant unstable cone field. More precisely, this cone field is defined in 
\begin{eqnarray}\label{set}
B_{t^\xi}(L^u_+)\cup B_{t^\xi}(L^u_-)\bigcup B_{t^\xi}(f^2_t(L^u_+))\cup B_{t^\xi}(f^2_t(L^u_-))\bigcup W^s_{loc}(S)
 \end{eqnarray}
which is the region that contains the part of the non-wandering set inside  $B^u_+\cup B^u_-\cup B^s_+\cup B^s_-$. Latter, by standard procedure, the cone field  is extended everywhere by iteration and taking the closure. 
Therefore, to show that the cone field is invariant, is enough to check it in  the region (\ref{set}). 
For points $(x,y)\in B_{t^\xi}(L^u_+)\cup B_{t^\xi}(L^u_-)$  the cone field has the vertical vector $(0,1)$ as direction and slope $t^\xi$, i.e.,
$${\cal C}(x,y)=\{v: slope(v, (0,1))\leq t^{\xi}\}.$$
For points $(x',y')=f^2_t(x,y)\in B_{t^\xi}(f^2_t(L^u_+))\cup B_{t^\xi}(f^2_t(L^u_-))$
it is taken a cone field with direction tangents  to $f^2_t(\{x\}\times L^u_+)$ with $|x|< \xi$ fixed and slope $t^{\xi}$. More precisely, given a point  $(x', y')=(f^u(y), x+f_t^s(y))\in B_{t^\xi}(f^2_t(L^u_+))$  and defining $w_{(x',y')}=  ({f^u}'(y), {f^s_t}'(y))$ then 
$${\cal C}(x', y')=\{v: slope(v, w_{(x',y')})\leq t^{\xi}\}.$$ 
For  points that belong to the local stable manifold of the saddle $S$, we take the forward  iterate of the cone defined in $[B_{t^\xi}(f^2_t(L^u_+))\cup B_{t^\xi}(f^2_t(L^u_-))]\cap W^s_{loc}(S)$ and for the saddle fixed point $S$ we take a cone centered at its unstable direction.  

Now we proceed to prove that the cone field are invariant.  From Lemma \ref{pozo1}, Lemma \ref{nonwander} and item (4) in the definition of $f_t^s$  follows that for any point $(x', y')\in B^s_+\cap \Omega(f_t)$ then   
$$slope(w_{(x',y')}, (1, 0))> t^{\frac{\al}{2}},$$ in particular, if $v\in {\cal C}(x', y')$ then 
$$slope(w_{(x',y')}, (1, 0))> t^{\frac{\al}{2}}-t^\xi> \frac{1}{2}t^{\frac{\al}{2}}.$$
Observe that by definition, 
\begin{eqnarray}\label{cono1}
 f^2_t({\cal C}(x,y))={\cal C}(f^2_t(x, y)).
\end{eqnarray}
Let $(x,y)$ be a point in $B^s_+$ that does not belong to the local stable manifold of $S$, i.e $y> 0$. Let   $m=m(x,y)$ be the first integer such that $f^{m}_t(x,y)\in B^+_+$,   i.e. $m$ is the first integer such that $\s^m y \in B^u_+$. It  remains to show that for $(x,y)\in B^s_+$ holds $$D_{(x,y)}f^{m+2}_t({\cal C}(x,y))\subset {\cal C}(f^{m+2}(x,y)).$$
From (\ref{cono1}) it is enough  to show that for $(x,y)\in B^s_+$ holds that 
\begin{eqnarray}\label{cono2}
D_{(x,y)}f^{m}_t({\cal C}(x,y))\subset {\cal C}(f^{m}_t(x,y)).
 \end{eqnarray}
Observe  that $m$ is larger than $m_0$ such that $\s^{m_0}t> b$.  So, giving $v=(1, v_2)\in {\cal C}_{(x,y)}$ then  $Df^m_t(v)=(\la^m, v_2\s^m)=v_2\s^m(\frac{\la^m}{v_2\s^m}, 1) $ with $v_2> t^\frac{\al}{2}-t^\xi> \frac{1}{2}t^\frac{\al}{2}$ and therefore
$$slope(Df^m_t(v), (0,1))\leq   \frac{\la^m}{v_2\s^m}\leq  t^{\ga-\frac{\al}{2}-1}$$. Recalling that $\ga>6$, $\al<\xi<3$, we have $\ga-\frac{\al}{2}-1>\xi$, which implies that the slope of $Df^m_t(v)$ with $(0,1)$ is strictly  smaller that $t^\xi$ and so (\ref{cono2}) is proved. 
\end{proof}

To finish the proof of  Proposition \ref{hyp},  we use a result proved in \cite{pujals-sambarino} (see Theorem B) that states that a generic Kupka-Smale $C^2$ diffeomorphism with a dominated splitting in its non-wandering set is Axiom A.

\begin{remark} From remark \ref{rob t},  we can assume without loss of generality that $f_t$ is Kupka-Smale and moreover it does not contain normally hyperbolic invariant curves. Therefore, from Theorem B in \cite{pujals-sambarino} it follows that $\La_{t}$ is a hyperbolic set. Since there are transversal homoclinic points associated to $S$, it follows that $\La_t$ contains a non trivial hyperbolic set.
\end{remark}

\begin{proof}[Proof of Proposition \ref{hyp}] To conclude, we have to show the strong transversality condition: the stable and unstable manifold  of any basic piece given by the  spectral decomposition intersect transversally. This follows immediately using that in  $\La_t
\setminus \{p_+, p_-, q, R_1, R_2, R_3\}$ there is a dominated splitting. In fact, if $z\in W^s(x)\cap W^u(x')$ for some $x, x' \in \Omega_t
\setminus \{p_+, p_-, q, R_1, R_2, R_3\}$,  $z\in \La_t$ and $W^s_{[x,z]}(x)\cup  W^u_{[x,z']}(x')\subset \La_t$ where $W^s_{[x,z]}(x) $ is the connected arc of $W^s(x)$ that contains $x$ and is bounded by $x$ and $z$ (and $W^u_{[x,z']}(x')$ is the connected arc of $W^u(x')$ that contains $x'$  and is bounded by $x'$ and $z$).
So, $T_z W^s(x)$ is tangent to the subbundle $E_z$ and  $T_z W^u(x)$ is tangent to the subbundle $F_z$ provided by the dominated splitting, and therefore the intersection is transversal.
\end{proof}

\subsection{About the construction of $f_t$}\label{about} We consider a tubular neighborhood $\cal T$  around $\ga^+$ that contains  $B_+^u$ and $B^s_-$ for the flow $\Phi_\frac{1}{m}$. Moreover, using appropriate coordinates we can assume that ${\cal T}=[-1, m+2]\times [-1,1]$, 
$B_+^u=[0,1]\times [-1,1]$, $B_+^s=[m,m+1]\times [-1,1]$ and $$\Phi_\frac{1}{m}(x, y)= (x+1, y).$$
Now we consider a map $g: \R\to \R$ with support in $[-\e, 1+\e]$ such that 
\begin{enumerate}
 \item $g_0:=g_{/[0,1]}$ verifies that $g_0(0)=g_0(1/2)=g_0(1)=0$, only has two critical points at $1/4,$ and $3/4$, $g_0(1/4)=1,$  $g_0(3/4)=1+\de$ , $g_0$  is positive in $(0,1/2)$ and negative in $(1/2,1);$
\item $g_0'$ is increasing in $[0,1/2]$ and decreasing in $[1/2,1];$
\item $g_1:=g_{/[1,1+\e]}$ has only one critical point and $g_1(x)<<g_0(x-1).$
\end{enumerate}
Moreover, taking $\e$ small, the maps can be chosen in such a way that $\hat g=g_0(x)+g_1(x+1)$ verifies in $[0,1]$ the same properties that $g_0$ verifies. Moreover, taking $g_t=t g$ and $\hat g_t=t\hat g=g_{0t}(x)+g_{1t}(x+1)$, for $t$ small then
the $C^r$ norm of  $\hat g_t$ is also small. Now observe that if $$f_t(x,y)= (x+1, y+b(y)g_t(x)),$$ where $b$ is bump function that is zero outside $[-1,1]$ and is equal to $1$ in $[-1/2, 1/2]$  then $f_t$ is a diffeomorphism, and for $(x,y)\in B_+^u$ with $y\in [-1/2, 1/2]$ follows that  $$f^m_t(x,y)= (x+m, y+\hat g_t(x))$$ and observe that after the coordinates changes in $\cal T$ the map has the properties required for $f_t$ in $B_+^u$.

\bibliographystyle{koro}
\bibliography{shadow}
\end{document}